\providecommand{\tabularnewline}{\\}
\providecommand{\U}[1]{\protect\rule{.1in}{.1in}}
\providecommand{\U}[1]{\protect\rule{.1in}{.1in}}
\newtheorem{theorem}{Theorem}
\newtheorem{corollary}[theorem]{Corollary}
\newtheorem{lemma}[theorem]{Lemma}
\newtheorem{proposition}[theorem]{Proposition}
\newtheorem{remark}{Remark}
\newenvironment{proof}[1][Proof]{\noindent\textbf{#1.} }{\ \rule{0.5em}{0.5em}}
\begin{document}

\title{Maximal monotone operators with non-empty domain interior; characterizations
and continuity properties}

\author{M.D. Voisei}

\date{{}}
\maketitle
\begin{abstract}
In the context of general Banach spaces characterizations for the
maximal monotonicity of operators with non-empty domain interior as
well as stronger continuity properties for such operators are provided. 
\end{abstract}

\section{Introduction}

In the context of locally convex or Banach spaces there are few characterizations
for the maximal monotonicity of an operator (see e.g. \cite[Theorem 3.8]{MR1009594},
\cite[Theorem 2.3]{MR2207807}, and \cite[Theorem 6]{MR2594359}).
All these characterizations are based on special convex representations
associated to the operator.

In a finite-dimensional space a complete characterization for the
maximality of a monotone operator is given in \cite[Theorem 3.4]{MR2465513}
in terms of direct operator notions: the near convexity of its domain,
convexity of its values, graph closedness, and behavior at the boundary
of its domain.

In a Banach space context characterizations of maximality, similar
to those found in the finite-dimensional case, are available for full-space
or open convex domain monotone multi-functions (see \cite[Theorem\ 1.2]{MR0180884},
\cite[Lemma 2.2]{MR1191009}, \cite[Theorem 40.2, p. 155]{MR1723737},
\cite[Lemma 4.2]{MR2453098}, and \cite[Lemma 7.7, p. 104]{MR1238715}).

Every maximal monotone operator in a finite dimensional space has
a non-empty convex relative interior of its domain which is dense
in the domain and a convex domain closure (see e.g. \cite{MR0132379},
\cite[Theorems 6.2, 6.3]{MR1451876}, \cite[Theorem 12.41, p. 554]{MR1491362}).

That is why, characterizations of the maximality of a monotone operator
with non-empty relative (algebraic) domain interior in a general Banach
space, in terms of notions directly linked to the operator and similar
to those seen in the finite-dimensional case, constitute generalizations
of all fore-mentioned results and that is our primary goal.

Our secondary goal is to reveal several continuity properties with
respect to the strong$\times$weak-star topology for maximal monotone
operators which have non-empty domain interiors and are defined in
a normed barrelled space, such as, the closedness of the graph, the
upper semicontinuity, and the Cesari property.

The plan of the paper is as follows. The next section introduces the
reader to the main notions and notations used in this article. Section
3 dwells with the restrictions of an operator to affine subsets. Section
4 analyzes the finite-dimensional case and provides a new proof of
\cite[Theorem 3.4]{MR2465513}. In Section 5 the characterizations
previously seen in the finite dimensional context are extended to
arbitrary Banach spaces via a hemicontinuity condition, demiclosedness,
or representability. Section 6 deals with the continuity properties
of monotone demiclosed operators that have a non-empty domain interior.

\section{Preliminaries}

Let $(E,\mu)$ be a locally convex space and $A\subset E$. We denote
by {}``$\operatorname*{conv}A$'' the \emph{convex hull} of $A$,
{}``$\operatorname*{aff}A$'' the \emph{affine hull} of $A$, {}``$\operatorname*{lin}A$''
the \emph{linear hull} of $A$; {}``$\operatorname*{cl}_{\mu}(A)=\overline{A}^{\mu}$''
the $\mu-$\emph{closure} of $A$, {}``$\operatorname*{ri}_{\mu}A$''
the topological interior of $A$ with respect to $\operatorname*{cl}_{\mu}(\operatorname*{aff}A)$,
{}``$\operatorname*{core}A$'' the \emph{algebraic interior} of
$A$, {}``$^{i}A$'' the \emph{relative algebraic interior} of $A$,
and $^{\mu-ic}A:={}^{i}A$ if $\operatorname*{aff}A$ is $\mu-$closed
and $^{\mu-ic}A:=\emptyset$ otherwise, the relative algebraic interior
of $A$ with respect to $\operatorname*{cl}_{\mu}(\operatorname*{aff}A)$\emph{.} 

When the topology $\mu$ is implicitly understood (such is the case
when we deal with the strong topology of a normed  space) the use
of the  $\mu-$notation is avoided.

\medskip{}

For $f,g:E\rightarrow\overline{\mathbb{R}}:=\mathbb{R}\cup\{-\infty,+\infty\}$
we set $[f\leq g]:=\{x\in E\mid f(x)\leq g(x)\}$; the sets $[f=g]$,
$[f<g]$, and $[f>g]$ are defined similarly.

\medskip{}

Throughout this paper, if not otherwise explicitly mentioned, $(X,\|\cdot\|)$
is a non trivial (that is, $X\neq\{0\}$) normed  space, $X^{\ast}$
is its topological dual endowed with the weak-star topology $w^{\ast}$,
the topological dual of $(X^{\ast},w^{\ast})$ is identified with
$X$, the weak topology on $X$ is denoted by $w$, and the strong
topology on $X$ is denoted by $s$. The closed unit ball of $X$
is denoted by $B_{X}:=\{x\in X\mid\|x\|\le1\}$. The \emph{duality
product} of $X\times X^{\ast}$ is denoted by $\left\langle x,x^{\ast}\right\rangle :=x^{\ast}(x)=:c(x,x^{\ast})$,
for $x\in X$, $x^{\ast}\in X^{\ast}$.

\medskip{}

As usual, for $S\subset X$, $S^{\perp}:=\{x^{*}\in X^{*}\mid\langle x,x^{*}\rangle=0,$
for every $x\in S\}$, $\sigma_{S}(x^{*}):=\sup_{x\in S}\langle x,x^{*}\rangle$,
$x^{*}\in X^{*}$ and for $A\subset X^{*}$, $A^{\perp}:=\{x\in X\mid\langle x,x^{*}\rangle=0,$
for every $x\in A\}$, $\sigma_{A}(x)=\sup_{x^{*}\in A}\langle x,x^{*}\rangle$,
$x\in X$.

\medskip{}

To a multi\-function $T:X\rightrightarrows X^{\ast}$ we associate
its \emph{graph:} $\operatorname*{Graph}T=\{(x,x^{\ast})\in X\times X^{\ast}\mid x^{\ast}\in T(x)\}$,
\emph{inverse:} $T^{-1}:X^{*}\rightrightarrows X$, $\operatorname*{Graph}T^{-1}=\{(x^{*},x)\mid(x,x^{\ast})\in\operatorname*{Graph}T\}$,
\emph{domain}: $D(T):=\{x\in X\mid T(x)\neq\emptyset\}=\operatorname*{Pr}_{X}(\operatorname*{Graph}T)$,
and \emph{range}: $R(T):=\{x^{*}\in X^{*}\mid x^{*}\in T(x)\ \mathrm{for\ some}\ x\in X\}=\operatorname*{Pr}_{X^{*}}(\operatorname*{Graph}T)$.
Here $\operatorname*{Pr}_{X}$ and $\operatorname*{Pr}_{X^{*}}$ are
the projections of $X\times X^{*}$ onto $X$ and $X^{\ast}$, respectively.
When no confusion can occur, $T$ will be identified with $\operatorname*{Graph}T$.

\medskip

On $X$, we consider the following classes of functions and operators 
\begin{description}
\item [{$\Lambda(X)$}] the class formed by proper convex functions $f:X\rightarrow\overline{\mathbb{R}}$.
Recall that $f$ is \emph{proper} if $\mathrm{dom}\: f:=\{x\in X\mid f(x)<\infty\}$
is nonempty and $f$ does not take the value $-\infty$, 
\item [{$\Gamma_{\tau}(X)$}] the class of functions $f\in\Lambda(X)$
that are $\tau$--lower semi\emph{\-}continuous (\emph{$\tau$--}lsc
for short); when the topology is implicitly understood we use the
notation $\Gamma(X)$, 
\item [{$\mathcal{M}(X)$}] the class of non-empty monotone operators $T:X\rightrightarrows X^{*}$.
Recall that $T:X\rightrightarrows X^{*}$ is \emph{monotone} if $\left\langle x_{1}-x_{2},x_{1}^{\ast}-x_{2}^{\ast}\right\rangle \ge0$,
for all $(x_{2},x_{2}^{*}),(x_{2},x_{2}^{*})\in T$. 
\item [{$\mathscr{M}(X)$}] the class of maximal monotone operators $T:X\rightrightarrows X^{*}$.
The maximality is understood in the sense of graph inclusion as subsets
of $X\times X^{*}$. 
\end{description}
To a proper function $f:X\rightarrow\overline{\mathbb{R}}$ and a
topology $\tau$ on $X$ we associate: 
\begin{itemize}
\item the \emph{epigraph} of $f$: $\operatorname*{epi}f:=\{(x,t)\in X\times\mathbb{R}\mid f(x)\leq t\}$, 
\item the \emph{convex hull} of $f$: $\operatorname*{conv}f:X\rightarrow\overline{\mathbb{R}}$,
is the greatest convex function majorized by $f$, $(\operatorname*{conv}f)(x):=\inf\{t\in\mathbb{R}\mid(x,t)\in\operatorname*{conv}(\operatorname*{epi}f)\}$,
$x\in X$, 
\item the \emph{$\tau-$lsc convex hull} of $f$: $\operatorname*{cl}_{\tau}\operatorname*{conv}f:X\rightarrow\overline{\mathbb{R}}$,
is the greatest \emph{$\tau$--}lsc convex function majorized by $f$,
$(\operatorname*{cl}_{\tau}\operatorname*{conv}f)(x):=\inf\{t\in\mathbb{R}\mid(x,t)\in\operatorname*{cl}_{\tau}\operatorname*{conv}\operatorname*{epi}f\}$
, $x\in X$, 
\item the \emph{convex conjugate} of $f:X\rightarrow\overline{\mathbb{R}}$
with respect to the dual system $(X,X^{\ast})$: $f^{\ast}:X^{\ast}\rightarrow\overline{\mathbb{R}}$,
$f^{\ast}(x^{\ast}):=\sup\{\left\langle x,x^{\ast}\right\rangle -f(x)\mid x\in X\}$,
$x^{\ast}\in X^{\ast}$. 
\item the \emph{subdifferential} of $f$ at $x\in X$: $\partial f(x):=\{x^{\ast}\in X^{\ast}\mid\left\langle x^{\prime}-x,x^{\ast}\right\rangle +f(x)\leq f(x^{\prime}),\ \forall x^{\prime}\in X\}$
for $x\in\operatorname*{dom}f$; $\partial f(x):=\emptyset$ for $x\not\in\operatorname*{dom}f$.
Recall that $N_{C}=\partial I_{C}$ is the \emph{normal cone} of $C\subset X$,
where $I_{C}$ is the \emph{indicator} \emph{function} of $C\subset X$
defined by $I_{C}(x):=0$ for $x\in C$ and $I_{C}(x):=\infty$ for
$x\in X\setminus C$. 
\end{itemize}
Let $Z:=X\times X^{\ast}$. It is known that $(Z,s\times w^{\ast})^{\ast}=Z$
via the coupling \[
z\cdot z^{\prime}:=\left\langle x,x^{\prime\ast}\right\rangle +\left\langle x^{\prime},x^{\ast}\right\rangle ,\quad\text{for }z=(x,x^{\ast}),\ z^{\prime}=(x^{\prime},x^{\prime\ast})\in Z.\]
For a proper function $f:Z\rightarrow\overline{\mathbb{R}}$ all the
above notions are defined similarly. The conjugate of $f$ with respect
to the natural dual system $(Z,Z)$ induced by the previous coupling
is given by\[
f^{\square}:Z\rightarrow\overline{\mathbb{R}},\quad f^{\square}(z)=\sup\{z\cdot z^{\prime}-f(z^{\prime})\mid z^{\prime}\in Z\},\]
and by the biconjugate formula, $f^{\square\square}=\mathrm{cl}_{s\times w^{\ast}}\mathrm{\, conv}\, f$
whenever $f^{\square}$ or $\mathrm{cl}_{s\times w^{\ast}}\mathrm{\, conv}\, f$
is proper.

We consider the following classes of functions on $Z$: 

\vspace{-.5cm}

\begin{align*}
\mathcal{C}: & =\mathcal{C}(Z):=\{f\in\Lambda(Z)\mid f\geq c\},\\
\mathcal{R}: & =\mathcal{R}(Z):=\Gamma_{s\times w^{\ast}}(Z)\cap\mathcal{C}(Z),\\
\mathcal{D}: & =\mathcal{D}(Z):=\{f\in\mathcal{R}(Z)\mid f^{\square}\geq c\}.\end{align*}
It is known that $[f=c]\in\mathcal{M}(X)$ for every $f\in\mathcal{C}(Z)$
(see e.g. \cite[Lemma\ 3.1]{MR2453098}).

\medskip{}

To a multi-valued operator $T:X\rightrightarrows X^{\ast}$ we associate
the following functions: 
\begin{itemize}
\item the \emph{Fitzpatrick function} of $T$ (introduced in \cite{MR1009594}):
$\varphi_{T}:Z\rightarrow\overline{\mathbb{R}}$, $\varphi_{T}:=c_{T}^{\square}$,
where $c_{T}:Z\rightarrow\overline{\mathbb{R}}$, $c_{T}:=c+\iota_{T}$;
\item $\psi_{T}:=\operatorname*{cl}\,\!_{s\times w^{\ast}}\operatorname*{conv}c_{T}$
(first considered in \cite{MR2207807}); $\psi_{T}=\varphi_{T}^{\square}=c_{T}^{\square\square}$
whenever $\varphi_{T}$ or $\psi_{T}$ is proper (for example when
$T\in\mathcal{M}(X)$ (see e.g. \cite[Proposition\ 3.2]{MR2389004})).
\end{itemize}
Note that for every $T\subset Z$, $[\varphi_{T}\le c]$ describes
the set of all $z\in Z$ that are monotonically related (m.r. for
short) to $T$, that is, $c(z-a)\ge0$, for every $a\in T$.

We call a multi\-function $T:X\rightrightarrows X^{\ast}$ 
\begin{description}
\item [{\textmd{\emph{unique}}}] if $T$ admits a unique maximal monotone
extension;
\item [{\textmd{\emph{representable}}}] in $Z$ if $T=[f=c]$, for some
$f\in\mathcal{R}$; in this case $f$ is called a \emph{representative}
of $T$. We denote by $\mathcal{R}_{T}$ the class of representatives
of $T$; this notion was first considered in this form in \cite{MR2207807};
\item [{\textmd{\emph{dual-representable}}}] if $T=[f=c]$, for some $f\in\mathcal{D}$;
in this case $f$ is called a \emph{d--representative} of $T$ and
we denote by $\mathcal{D}_{T}$ the class of d-representatives of
$T$;
\item [{\textmd{\emph{NI}}}] or of \emph{negative infimum type in $Z$}
if $\varphi_{T}\geq c$ in $Z$;
\item [{\textmd{\emph{demiclosed}}}] if $\operatorname*{Graph}T$ is closed
with respect to the strong$\times$weak-star convergence of bounded
nets in $Z$, that is, if $x_{i}\rightarrow x_{0}$ strongly in $X$,
$x_{i}^{*}\rightarrow x_{0}^{*}$ weakly-star in $X^{*}$, $(x_{i}^{*})_{i}$
is (strongly) bounded in $X^{*}$, and $\{(x_{i},x_{i}^{*})\}_{i}\subset T$
then $(x_{0},x_{0}^{*})\in T$. 
\item [{$strongly\times weakly-star\ upper\ \mathscr{H}-semicontinuous$}] ($s\times w^{*}-$usc
for short) \emph{at $x\in X$ }if for every weak-star open set $V\supset Tx$
there exists a neighborhood $U$ of $x$ such that $T(U):=\cup_{u\in U}Tu\subset V$
(or equivalently for every net $\{x_{i}\}_{i\in I}$ with $x_{i}\rightarrow x$,
strongly in $X$, eventually $Tx_{i}\subset V$). The operator $T$
is \emph{$s\times w^{*}-$usc} if $T$ is $s\times w^{*}-$usc at
$x$, for every $x\in X$ . 
\end{description}
It is easily checked that every representable operator is monotone
demiclosed and has $w^{*}-$closed convex values.

Recall that $T\in\mathscr{M}(X)$ iff $T$ is NI and representable
(see \cite{MR2207807} or \cite{MR2453098}); also, whenever $T\in\mathscr{M}(X)$ 
\begin{description}
\item [{$T$}] is dual-representable with $\varphi_{T},\psi_{T}\in\mathcal{D}_{T}$
(see \cite[Theorems 2.1, 2.2]{MR2207807}),
\item [{$\operatorname*{ri}D(T)={}^{{\rm ic}}D(T)={}^{{\rm ic}}(\operatorname*{ri}D(T))$,}] in
particular $\operatorname*{int}D(T)=\operatorname*{core}D(T)$ and
$\overline{D(T)}$ is convex whenever $^{{\rm ic}}(\operatorname*{conv}D(T))\neq\emptyset$
(see \cite[Corollary\ 3]{MR2577332}). 
\end{description}
Some of the main characterizations of maximal monotonicity can be
found in \cite[Theorem 3.8]{MR1009594}\emph{, }\cite[Theorem 2.3]{MR2207807},
\cite[Theorem\ 6]{MR2594359}, \cite[Lemma 4.2]{MR2453098}, \cite[Theorem 40.2, p. 155]{MR1723737},
\cite[Lemma 7.7, p. 104]{MR984602} and they will often be recalled
in the sequel. For other properties of the notions discussed in this
section we suggest \cite{MR2594359,MR2577332,MR2389004,MR2453098,MR2207807,astfnc}.

Since the characterization of maximality for a monotone operator with
a singleton domain is trivial, in this paper we do not consider singleton-domain
operators.

Throughout this article the conventions $\sup\emptyset=-\infty$ and
$\inf\emptyset=\infty$ are enforced.

\section{Restrictions to affine sets}

For $X$ a separated locally convex space and $F\subset X$ a linear
subspace with $D(T)\cap F\neq\emptyset$, let $T_{F}:=\iota_{F}^{*}T\iota_{F}:F\rightrightarrows F^{*}$
\begin{equation}
T_{F}x:=\{x^{*}|_{F}\mid x^{*}\in Tx\},\ x\in D(T_{F}):=D(T)\cap F.\label{def TF}\end{equation}
 Here $\iota_{F}^{*}:X^{*}\rightarrow F^{*}$, $\iota_{F}^{*}(x^{*})=x^{*}|_{F}$
stands for the adjoint of $\iota_{F}:F\rightarrow X$, $\iota_{F}(x)=x$.

\begin{lemma} \label{sub}Let $X$ be a normed space, let $T:X\rightrightarrows X^{*}$,
and let $F\subset X$ be a linear subspace such that $D(T)\cap F\neq\emptyset$.

Consider the conditions\emph{: (i)} $T_{F}\in\mathscr{M}(F)$; \emph{(ii)}
$T+N_{F}\in\mathscr{M}(X)$.

Then \emph{(ii) $\Rightarrow$ (i). If, in addition, $F$ is closed
then (i) $\Rightarrow$ (ii). }$\ $\end{lemma}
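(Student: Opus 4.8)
The plan is to exploit the identification $N_F = \partial I_F$ together with the standard fact that $D(N_F)=F$, and to pass information between $T$ on $X$ and $T_F$ on $F$ through the adjoint $\iota_F^*$ and its kernel $F^\perp$. The key structural observation is that, for $x\in F$, the normal cone satisfies $N_F(x)=F^\perp$, and the restriction map $\iota_F^*$ induces an isometric-type identification $X^*/F^\perp \cong F^*$; monotone-relatedness of a pair $(x,x^*)\in F\times F^*$ to $T_F$ should translate into monotone-relatedness of a suitable lift $(x,\bar x^*)\in X\times X^*$ to $T+N_F$, since adding an element of $F^\perp$ to the second coordinate does not change any duality pairing $\langle x_1-x_2,\cdot\rangle$ when $x_1,x_2\in F$.

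\textbf{Proof of (ii) $\Rightarrow$ (i).}
First I would show $T_F$ is monotone (immediate: for $x_1,x_2\in D(T)\cap F$ and $x_i^*\in Tx_i$, $\langle x_1-x_2, x_1^*|_F - x_2^*|_F\rangle = \langle x_1-x_2, x_1^*-x_2^*\rangle\ge 0$, and similarly adding normal-cone elements preserves this). For maximality, take $(x,y^*)\in F\times F^*$ monotonically related to $T_F$. Choose any Hahn--Banach extension $\bar y^*\in X^*$ of $y^*$. I claim $(x,\bar y^*)$ is monotonically related to $T+N_F$: for $(a,a^*+n^*)\in T+N_F$ with $a\in D(T)\cap F$, $a^*\in Ta$, $n^*\in F^\perp$, we have $\langle x-a,\bar y^* - a^* - n^*\rangle = \langle x-a, \bar y^*-a^*\rangle$ since $x-a\in F$ and $n^*\in F^\perp$, and this equals $\langle x-a, y^* - a^*|_F\rangle\ge 0$ because $a^*|_F\in T_F a$. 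Here I should also note $D(T+N_F)=D(T)\cap F\subset F$, so only such $a$ occur. By maximality of $T+N_F$, $(x,\bar y^*)\in T+N_F$, i.e. $\bar y^* = a^* + n^*$ for some $a^*\in Tx$, $n^*\in F^\perp$; restricting to $F$ gives $y^* = a^*|_F \in T_F x$. Hence $T_F\in\mathscr{M}(F)$.

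\textbf{Proof of (i) $\Rightarrow$ (ii) when $F$ is closed.}
Now $F$ closed guarantees that $I_F\in\Gamma(X)$, so $N_F$ is maximal monotone and, more importantly, that $F^*$ is genuinely the quotient $X^*/F^\perp$ and every $y^*\in F^*$ lifts to some $\bar y^*\in X^*$ (Hahn--Banach again). Monotonicity of $T+N_F$ is routine. For maximality, let $(x,x^*)\in X\times X^*$ be monotonically related to $T+N_F$. Testing against the pairs $(a, a^* + n^*)$, $a\in D(T)\cap F$, $a^*\in Ta$, $n^*\in F^\perp$, and letting $n^*$ range over the subspace $F^\perp$ forces $\langle x-a, n^*\rangle$ to be bounded below over all $n^*\in F^\perp$, which is only possible if $x - a \in (F^\perp)^\perp = F$ (using $F$ closed), i.e. $x\in F$. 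Then, with $x\in F$, the relation $\langle x-a, x^*-a^*\rangle \ge 0$ for all $a\in D(T)\cap F=D(T_F)$, $a^*\in Ta$ says exactly that $(x, x^*|_F)$ is monotonically related to $T_F$; by (i), $x^*|_F\in T_F x$, so there is $a^*\in Tx$ with $a^*|_F = x^*|_F$, hence $x^* - a^*\in F^\perp = N_F(x)$, giving $x^* = a^* + (x^*-a^*)\in (T+N_F)(x)$. Thus $T+N_F\in\mathscr{M}(X)$.

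\textbf{Main obstacle.}
The one genuinely delicate point is the step "$x\in F$" in the converse direction: recovering that the candidate point lies in $F$ from monotone-relatedness to $T+N_F$. This is where closedness of $F$ is essential (so that $(F^\perp)^\perp=F$), and one must argue carefully that monotone-relatedness against the whole affine family $a^* + F^\perp$ forces a linear inequality in $n^*\in F^\perp$ that can only hold as an identity, pinning $x$ into $F$; the separating-functional / bipolar argument here is the crux, whereas everything else reduces to bookkeeping with restrictions and the annihilator $F^\perp$.
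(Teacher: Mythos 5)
Your proposal is correct and follows essentially the same route as the paper: for (ii)~$\Rightarrow$~(i) you lift $f^{*}\in F^{*}$ to $X^{*}$ by Hahn--Banach, check monotone-relatedness to $T+N_{F}$, and restrict back; for (i)~$\Rightarrow$~(ii) you use that monotone-relatedness against the affine family $a^{*}+F^{\perp}$ forces $x-a\in F^{\perp\perp}=F$ (the bipolar step, which is exactly where the paper also uses closedness of $F$), and then read off membership in $T_{F}$. No gaps.
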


\begin{proof} It is easily checked that $T_{F}\in\mathcal{M}(F)$
iff $T|_{F}\in\mathcal{M}(X)$ iff $T+N_{F}\in\mathcal{M}(X)$. Here
$T|_{F}$ stands for the restriction of $T$ to $F$.

(ii) $\Rightarrow$ (i) Let $(f,f^{*})\in F\times F^{*}$ be m.r.
to $T_{F}$, i.e., \begin{equation}
\langle f-x,f^{*}-x^{*}|_{F}\rangle\ge0,\ \forall x\in D(T)\cap F,\ x^{*}\in Tx.\label{ssmrt}\end{equation}
 Take $y^{*}\in X^{*}$ with $y^{*}|_{F}=f^{*}$. Relation (\ref{ssmrt})
implies that $(f,y^{*})$ is m.r. to $T+N_{F}\in\mathscr{M}(X)$;
from which $f\in D(T)$ and $y^{*}\in Tf+F^{\perp}$. This yields
$(f,f^{*})\in T_{F}$. 

(i) $\Rightarrow$ (ii) Assume that $F$ is closed and $T_{F}\in\mathscr{M}(F)$.
Let $(x_{0},x_{0}^{*})$ be m.r. to $T+N_{F}$, that is,\begin{equation}
\langle x_{0}-x,x_{0}^{*}-x^{*}-f^{*}\rangle\ge0,\ \forall x\in D(T)\cap F,\ x^{*}\in Tx,\ f^{*}\in N_{F}(x)=F^{\perp}.\label{smrt}\end{equation}
 Since $F^{\perp}$ is a linear subspace, $x_{0}-x\in F^{\perp\perp}=F$
for every $x\in D(T)\cap F$; in particular $x_{0}\in F$. Relation
(\ref{smrt}) becomes $\langle x_{0}-x,x_{0}^{*}-x^{*}\rangle\ge0$,
for every $x\in D(T)\cap F$, $x^{*}\in Tx$. In other words $(x_{0},x_{0}^{*}|_{F})$
is m.r. to $T_{F}$ and so $x_{0}\in D(T)$ and there is $x_{1}^{*}\in Tx_{0}$
such that $x_{0}^{*}|_{F}=x_{1}^{*}|_{F}$. This last equality is
equivalent to $x_{0}^{*}-x_{1}^{*}\in F^{\perp}=N_{F}(x_{0})$. \end{proof}

\begin{remark} Notice that for $T=X\times\{0\}$, $T_{F}=F\times\{0\}\in\mathscr{M}(F)$,
for every linear subspace $F\subset X$, while $T+N_{F}=F\times F^{\perp}\in\mathscr{M}(X)$
iff $F$ is closed. Therefore a closed $F$ is necessary and sufficient
for (i) $\Rightarrow$ (ii) to hold. \end{remark} 

\begin{lemma} \label{sat} Let $X$ be a  normed space, let $T:X\rightrightarrows X^{*}$,
and let $F$ be a linear subspace such that $D(T)\subset F$.

Consider the conditions\emph{: (i)} $T\in\mathscr{M}(X)$; \emph{(ii)}
$T_{F}\in\mathscr{M}(F)$ and $T=T+N_{F}$. 

Then \emph{(i) $\Rightarrow$ (ii). If, in addition, $F$ is closed
then (ii) $\Rightarrow$ (i).}\end{lemma}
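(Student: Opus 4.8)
The key is to relate maximality of $T$ on $X$ with maximality of the smaller operator $T_F$ on $F$, using the previous Lemma~\ref{sub} as a bridge. First I would record the elementary equivalence already used in the proof of Lemma~\ref{sub}: $T_F\in\mathcal M(F)$ iff $T|_F\in\mathcal M(X)$ iff $T+N_F\in\mathcal M(X)$, and since $D(T)\subset F$ we have $T=T|_F$, so $T\in\mathcal M(X)$ iff $T_F\in\mathcal M(F)$ iff $T+N_F\in\mathcal M(X)$. In particular all three monotonicity statements are interchangeable here, and the only real content concerns maximality.

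For (i) $\Rightarrow$ (ii): assume $T\in\mathscr M(X)$. Since $T\subset T+N_F$ and $T+N_F$ is monotone (by the equivalence above, as $T$ is monotone), maximality of $T$ forces $T=T+N_F$. It remains to show $T_F\in\mathscr M(F)$, i.e. that $T_F$ is maximal monotone in $F$. Let $(f,f^*)\in F\times F^*$ be monotonically related to $T_F$, i.e. (\ref{ssmrt}) holds. Pick $y^*\in X^*$ with $y^*|_F=f^*$. As in the proof of Lemma~\ref{sub}, (\ref{ssmrt}) says exactly that $(f,y^*)$ is m.r. to $T+N_F$; but $T+N_F=T\in\mathscr M(X)$, so $(f,y^*)\in T$, hence $f\in D(T)$ and $f^*=y^*|_F\in T_Ff$. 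Thus $T_F$ is maximal monotone in $F$.

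For the converse, assume in addition that $F$ is closed, that $T_F\in\mathscr M(F)$, and that $T=T+N_F$. From $T_F\in\mathscr M(F)$ (hence $T_F\in\mathscr M(F)$ with $F$ closed) Lemma~\ref{sub}, direction (i)$\Rightarrow$(ii), gives $T+N_F\in\mathscr M(X)$; combined with the hypothesis $T=T+N_F$ this yields $T\in\mathscr M(X)$. This is the step where closedness of $F$ is genuinely used: it is needed both to invoke Lemma~\ref{sub} and because, as the Remark after Lemma~\ref{sub} shows (taking $T=X\times\{0\}$, which forces $F=X$, a degenerate instance here, but the phenomenon is the same via $N_F$), $T+N_F$ can fail to be maximal when $F$ is not closed even though $T_F$ is maximal on $F$.

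The main obstacle — and it is mild — is bookkeeping with the restriction maps: making sure that ``m.r. to $T_F$ in $F\times F^*$'' transfers correctly to ``m.r. to $T+N_F$ in $X\times X^*$'' via an arbitrary Hahn–Banach extension $y^*$ of $f^*$, and that the conclusion $(f,y^*)\in T+N_F=T$ really returns an element of $T_Ff$ after restriction. All of this is exactly the computation already carried out inside the proof of Lemma~\ref{sub}, so the present proof is essentially an assembly of that lemma with the trivial observation $T=T|_F$ forced by $D(T)\subset F$.
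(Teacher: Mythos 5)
Your proof is correct and follows essentially the same route as the paper: maximality of $T$ together with $T\subset T+N_{F}\in\mathcal{M}(X)$ forces $T=T+N_{F}\in\mathscr{M}(X)$, and then both directions reduce to Lemma~\ref{sub} (your inline re-derivation of the m.r.\ transfer for $(f,f^{*})$ is just the proof of Lemma~\ref{sub} unfolded, and your side remark about which counterexample shows necessity of closedness is immaterial to the argument).
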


\begin{proof} (i) $\Rightarrow$ (ii) Since $\mathscr{M}(X)\ni T\subset T+N_{F}\in\mathcal{M}(X)$
we find $T=T+N_{F}\in\mathscr{M}(X)$. Hence, from Lemma \ref{sub},
$T_{F}\in\mathscr{M}(F)$. 

\noindent (ii) $\Rightarrow$ (i) According to Lemma \ref{sub}, $T=T+N_{F}\in\mathscr{M}(X)$
since $T_{F}\in\mathscr{M}(F)$. \end{proof}

\begin{remark} For a linear subspace $F\subset X$, take $T=N_{F}=F\times F^{\perp}$.
Then $T_{F}=F\times\{0\}\in\mathscr{M}(F)$, $T=T+N_{F}$, but $T\in\mathscr{M}(X)$
iff $F$ is closed. Therefore, in the previous lemma, the implication
(ii) $\Rightarrow$ (i) requires $F$ to be closed.\end{remark} 

For $T:X\rightrightarrows X^{\ast}$ and $z\in X$, consider the translation
$T_{z}(x):=T(x+z)$, $x\in D(T_{z})=D(T)-z$. Similarly, for an affine
subset $A\subset X$ and $z\in A$ let $F:=A-z$ be the linear subspace
parallel to $A$. Note that $\operatorname*{Graph}N_{A}=A\times F^{\perp}$.
By definition $T_{A,z}:F\rightrightarrows F^{*}$, $T_{A,z}:=(T_{z})_{F}$
or equivalently \begin{equation}
\operatorname*{Graph}T_{A,z}=\{(x,x^{*}|_{F})\mid x^{*}\in T(z+x),\ x\in D(T_{A,z}):=(D(T)-z)\cap F\}.\label{def TAz}\end{equation}
 Notice that $T_{A,z}$ is non-empty iff $D(T)\cap A\neq\emptyset$.
Also, note that $T_{F,z}=(T_{F})_{z}$, for every $z\in F$.

\strut

The following two results are consequences of the previous lemmas
and the fact that a translation preserves the NI type and the (maximal)
monotonicity; in other words $T$ is NI iff $T_{z}$ is NI and $T$
is (maximal) monotone iff $T_{z}$ is (maximal) monotone, for every
(some) $z\in X$.

\begin{lemma} \label{afin}Let $X$ be a  normed space, let $T:X\rightrightarrows X^{*}$,
let $A\subset X$ be an affine set such that $D(T)\cap A\neq\emptyset$,
and let $F$ be the linear subspace parallel to $A$. If $T+N_{A}\in\mathscr{M}(X)$
then $T_{A,z}\in\mathscr{M}(F)$, for every $z\in A$. If, in addition,
$A$ is closed then $T+N_{A}\in\mathscr{M}(X)$ whenever $T_{A,z}\in\mathscr{M}(F)$,
for some $z\in A$. 

\end{lemma}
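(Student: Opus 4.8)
The plan is to reduce Lemma \ref{afin} to Lemma \ref{sub} by translating the whole picture by an element $z\in A$, using the stated fact that translations preserve both the NI type and (maximal) monotonicity, and the identification $\operatorname*{Graph}T_{A,z}=(T_z)_F$ with $F=A-z$.

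First I would fix $z\in A$ and set $F:=A-z$, so that $F$ is a linear subspace with $D(T_z)\cap F=(D(T)-z)\cap F\neq\emptyset$ precisely because $D(T)\cap A\neq\emptyset$. The key observations are: (a) $N_A(x)=N_F(x-z)=F^\perp$ for $x\in A$, hence $(T+N_A)_z=T_z+N_F$ as operators on $X$; and (b) $T_{A,z}=(T_z)_F$ by the very definition in (\ref{def TAz}). Now suppose $T+N_A\in\mathscr{M}(X)$. Translating by $z$ gives $(T+N_A)_z=T_z+N_F\in\mathscr{M}(X)$, since translation preserves maximal monotonicity. Applying Lemma \ref{sub} (the implication (ii) $\Rightarrow$ (i), which needs no closedness hypothesis) to the operator $T_z$ and the subspace $F$ yields $(T_z)_F\in\mathscr{M}(F)$, i.e.\ $T_{A,z}\in\mathscr{M}(F)$. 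Since $z\in A$ was arbitrary, the first assertion follows.

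For the converse, assume $A$ is closed and $T_{A,z_0}\in\mathscr{M}(F)$ for some $z_0\in A$; put $F:=A-z_0$, which is now a closed linear subspace because a translate of a closed affine set is a closed subspace. Then $(T_{z_0})_F=T_{A,z_0}\in\mathscr{M}(F)$, and Lemma \ref{sub} (the implication (i) $\Rightarrow$ (ii), valid since $F$ is closed) gives $T_{z_0}+N_F\in\mathscr{M}(X)$. Undoing the translation, $(T+N_A)_{z_0}=T_{z_0}+N_F$, and since translation preserves maximal monotonicity we conclude $T+N_A\in\mathscr{M}(X)$.

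I do not anticipate a genuine obstacle here: the content is entirely bookkeeping around the translation $T\mapsto T_z$ and the elementary identities $N_A=N_F$ (after translation), $(T+N_A)_z=T_z+N_F$, and $T_{A,z}=(T_z)_F$. The one point to state carefully is that the asymmetry in the hypotheses — ``for every $z\in A$'' in the forward direction versus ``for some $z\in A$'' in the backward direction — mirrors exactly the asymmetry already present in Lemma \ref{sub}, and that the closedness of $A$ is exactly what is needed to make the parallel subspace $F$ closed so that the harder implication of Lemma \ref{sub} applies. It is also worth recording that $T_{F,z}=(T_F)_z$ (noted just before the statement) makes the composition of restriction and translation unambiguous, so no consistency issue arises from the order in which the two operations are performed.
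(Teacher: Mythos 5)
Your proof is correct and follows essentially the same route as the paper: translate by $z$ so that $(T+N_A)_z=T_z+N_F$ and $T_{A,z}=(T_z)_F$, then invoke Lemma \ref{sub} in the appropriate direction, using closedness of $A$ (hence of $F$) only for the converse. The paper's own proof is just a terser rendering of exactly this bookkeeping.
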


\begin{proof} If $T+N_{A}\in\mathscr{M}(X)$ then $(T+N_{A})_{z}=T_{z}+N_{F}\in\mathscr{M}(X)$.
According to Lemma \ref{sub}, $T_{A,z}\in\mathscr{M}(F)$ for every
$z\in A$.

Conversely, if $A$ is closed and $T_{A,z}=(T_{z})_{F}\in\mathscr{M}(F)$
for some $z\in A$ then, from Lemma \ref{sub}, $T_{z}+N_{F}=(T+N_{A})_{z},T+N_{A}\in\mathscr{M}(X)$.
\end{proof}

\strut

Similarly, one has

\begin{lemma} \label{afinr} Let $X$ be a  normed space, let $T:X\rightrightarrows X^{*}$,
let $A\subset X$ be an affine set such that $D(T)\subset A$, and
let $F$ be the linear subspace parallel to $A$. If $T\in\mathscr{M}(X)$
then $T=T+N_{A}$ and $T_{A,z}\in\mathscr{M}(F)$, for every $z\in A$.
If, in addition, $A$ is closed then $T\in\mathscr{M}(X)$ whenever
$T=T+N_{A}$ and $T_{A,z}\in\mathscr{M}(F)$, for some $z\in A$.
\end{lemma}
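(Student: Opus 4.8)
The plan is to reduce Lemma~\ref{afinr} to Lemma~\ref{sat} by the same translation trick already used for Lemma~\ref{afin}, exploiting that translation preserves (maximal) monotonicity and that, for a fixed $z\in A$ with $F=A-z$, one has $N_A(x)=F^\perp=N_F(x-z)$ for all $x\in A$, hence $(T+N_A)_z=T_z+N_F$ and $T_{A,z}=(T_z)_F$ by definition~(\ref{def TAz}). Also note $D(T)\subset A$ translates to $D(T_z)=D(T)-z\subset A-z=F$, so Lemma~\ref{sat} is applicable to the operator $T_z$ and the subspace $F$.

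First I would prove the implication \emph{(i)~$\Rightarrow$~(ii)}. Assume $T\in\mathscr M(X)$. Fix an arbitrary $z\in A$. Since translation preserves maximal monotonicity, $T_z\in\mathscr M(X)$, and $D(T_z)\subset F$ as noted. Applying Lemma~\ref{sat} to $T_z$ and the (not necessarily closed) subspace $F$ gives $T_z=T_z+N_F$ and $(T_z)_F\in\mathscr M(F)$, i.e.\ $T_{A,z}\in\mathscr M(F)$. Translating the identity $T_z=T_z+N_F$ back by $-z$ yields $T=T+N_A$ (using $N_A(x)=N_F(x-z)$). Since $z\in A$ was arbitrary, this holds for every $z\in A$.

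Next I would prove \emph{(ii)~$\Rightarrow$~(i)} under the extra hypothesis that $A$ is closed. Suppose $T=T+N_A$ and $T_{A,z}\in\mathscr M(F)$ for some $z\in A$. Closedness of $A$ is equivalent to closedness of the parallel subspace $F$ (translation is a homeomorphism). Translate by $z$: the hypotheses become $T_z=T_z+N_F$ and $(T_z)_F=T_{A,z}\in\mathscr M(F)$, with $F$ closed and $D(T_z)\subset F$. Lemma~\ref{sat}, implication (ii)~$\Rightarrow$~(i), then gives $T_z\in\mathscr M(X)$, and since translation preserves maximal monotonicity, $T\in\mathscr M(X)$.

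The only genuinely nontrivial point — and the one to state carefully — is that all four reductions are legitimate: that $(T+N_A)_z=T_z+N_F$ and $T_{A,z}=(T_z)_F$ hold on the nose (immediate from $\operatorname{Graph}N_A=A\times F^\perp$, the definition of the translation $T_z$, and definition~(\ref{def TAz})), and that translation by a vector preserves monotonicity, maximal monotonicity, the NI property, and topological closedness of sets. These are exactly the facts flagged in the paragraph preceding Lemma~\ref{afin}, so after invoking them the proof is a one-line appeal to Lemma~\ref{sat} in each direction; no separate obstacle remains.
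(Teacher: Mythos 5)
Your proof is correct and is exactly the argument the paper intends: the paper states Lemma~\ref{afinr} with only the remark ``Similarly, one has,'' meaning it is obtained from Lemma~\ref{sat} by the same translation reduction used to derive Lemma~\ref{afin} from Lemma~\ref{sub}, which is precisely what you carry out. The identities $(T+N_A)_z=T_z+N_F$, $T_{A,z}=(T_z)_F$, and the preservation of (maximal) monotonicity under translation are all correctly invoked, and you rightly note that the closedness of $F$ is only needed for the converse direction of Lemma~\ref{sat}.
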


Hereditary properties from $T$ to $T_{A,z}$, where $A$ is affine
with $D(T)\subset A$, are studied next.

\begin{proposition} \label{val-inchise} Let $X$ be a separated
locally convex space, let $T:X\rightrightarrows X^{*}$ be such that
$T=T+N_{A}$, where $A\subset X$ is affine with $D(T)\subset A$,
and let $F$ be the linear subspace parallel to $A$. Then $T$ has
$w^{*}-$closed values in $X^{*}$ whenever $T_{A,z}$ has $w^{*}-$closed
values in $F^{*}$, for some $z\in A$. If, in addition, $A$ is closed
then $T_{A,z}$ has $w^{*}-$closed values in $F^{*}$, for every
$z\in A$ whenever $T$ has $w^{*}-$closed values in $X^{*}$. \end{proposition}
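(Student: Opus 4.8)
The plan is to transport everything through the adjoint restriction map $\iota_F^{\ast}\colon X^{\ast}\to F^{\ast}$, $\iota_F^{\ast}(x^{\ast})=x^{\ast}|_F$, whose kernel is $F^{\perp}$. First I would set up the dictionary. Fix $z\in A$ and put $y:=z+x$. Since $D(T)\subset A=z+F$, one has $(D(T)-z)\cap F=D(T)-z$, so by \eqref{def TAz} the domain of $T_{A,z}$ is $D(T)-z$ and $T_{A,z}(x)=\iota_F^{\ast}(T(y))$ for every $x\in D(T_{A,z})$. Moreover $\operatorname*{Graph}N_A=A\times F^{\perp}$ gives $N_A(y)=F^{\perp}$ for $y\in A$, so the hypothesis $T=T+N_A$ says precisely that every value $T(y)$, $y\in D(T)$, is $F^{\perp}$--saturated, that is, $T(y)=T(y)+F^{\perp}$; combined with the previous identity this yields
\[
T(y)=(\iota_F^{\ast})^{-1}\big(\iota_F^{\ast}(T(y))\big)=(\iota_F^{\ast})^{-1}\big(T_{A,z}(x)\big),\qquad y=z+x\in D(T).
\]
The only other ingredient is that $\iota_F^{\ast}$ is continuous from $(X^{\ast},w^{\ast})$ to $(F^{\ast},w^{\ast})$, which is clear since for each $f\in F$ the functional $x^{\ast}\mapsto\langle f,\iota_F^{\ast}x^{\ast}\rangle=\langle f,x^{\ast}\rangle$ is $w^{\ast}$--continuous on $X^{\ast}$.

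With this in hand the first assertion is immediate: if $T_{A,z}(x)$ is $w^{\ast}$--closed in $F^{\ast}$ then $T(y)=(\iota_F^{\ast})^{-1}(T_{A,z}(x))$ is $w^{\ast}$--closed in $X^{\ast}$, being the preimage of a closed set under a continuous map; letting $y$ range over $D(T)$ (so that $x=y-z$ ranges over $D(T_{A,z})$) shows that all values of $T$ are $w^{\ast}$--closed. For the converse, assume in addition that $A$, equivalently $F$, is closed; fix $z\in A$, $x\in D(T_{A,z})$, $y=z+x$, and factor $\iota_F^{\ast}=\bar\iota\circ\pi$, where $\pi\colon X^{\ast}\to X^{\ast}/F^{\perp}$ is the canonical quotient map and $\bar\iota\colon X^{\ast}/F^{\perp}\to F^{\ast}$ is the induced algebraic isomorphism (surjective by Hahn--Banach). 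Since $F^{\perp}$ is a linear subspace, $\pi$ is a continuous open surjection, hence a quotient map; and since $T(y)=T(y)+F^{\perp}=\pi^{-1}(\pi(T(y)))$ is $w^{\ast}$--closed, $\pi(T(y))$ is closed in $X^{\ast}/F^{\perp}$ for the quotient topology. Once we know that this quotient topology corresponds, under $\bar\iota$, to the weak-star topology of $F^{\ast}$, it follows that $T_{A,z}(x)=\iota_F^{\ast}(T(y))=\bar\iota(\pi(T(y)))$ is $w^{\ast}$--closed, and this holds for every $z\in A$ and every $x\in D(T_{A,z})$.

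The step that needs the most care, and the one place where closedness of $A$ is genuinely used, is precisely this identification of topologies: when $F^{\perp}$ is a $w^{\ast}$--closed subspace of $X^{\ast}$ (which it always is, being an annihilator), the quotient of $w^{\ast}=\sigma(X^{\ast},X)$ by $F^{\perp}$ is $\sigma\big(X^{\ast}/F^{\perp},(F^{\perp})^{\perp}\big)$, and $(F^{\perp})^{\perp}=F^{\perp\perp}$ equals the closure of $F$ in $X$, hence equals $F$ exactly when $F$ is closed; under $\bar\iota$ this topology is $\sigma(F^{\ast},F)$, i.e.\ the weak-star topology of $F^{\ast}$. I would establish this via the standard behaviour of weak topologies under passage to quotients --- equivalently, by checking directly that a $w^{\ast}$--closed, $F^{\perp}$--saturated subset of $X^{\ast}$ is closed for the coarser topology $\sigma(X^{\ast},F)$ of pointwise convergence on $F$ when $F$ is closed. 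That closedness is indispensable in this direction is in keeping with the remarks following Lemmas~\ref{sub} and \ref{sat}, where the analogous implications likewise require $F$ (resp.\ $A$) to be closed.
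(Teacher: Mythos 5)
Your proposal is correct and follows essentially the same route as the paper: one direction via $T(y)=(\iota_F^{\ast})^{-1}(T_{A,z}(y-z))$ and continuity of $\iota_F^{\ast}$, the other via the factorization through the quotient $(X^{\ast},w^{\ast})/F^{\perp}$ and its identification with $(F^{\ast},w^{\ast})$ when $F$ is closed. The only differences are cosmetic (the paper first normalizes to $z=0\in A=F$, and you spell out the bipolar reason $F^{\perp\perp}=\overline{F}$ for why closedness of $F$ is needed in the topological identification).
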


\begin{proof} We may assume without loss of generality that $z=0\in A=F$
in which case $T_{A,z}=T_{F}$. 

It suffices to observe that $(\iota_{F}^{*})^{-1}(T_{F}x)=Tx+F^{\perp}=Tx$,
for every $x\in D(T)$ to conclude that $T$ has $w^{*}-$closed values
in $X^{*}$ provided that $T_{F}$ has $w^{*}-$closed values in $F^{*}$.

Conversely, assume that $F$ is closed. Let $\pi:(X^{*},w^{*})\rightarrow(X^{*},w^{*})/F^{\perp}$,
$\pi(x^{*})=x^{*}+F^{\perp}$, $x^{*}\in X^{*}$, be the projection
map of $(X^{*},w^{*})$ onto the quotient space $(X^{*},w^{*})/F^{\perp}$.
Note that $\pi^{-1}(\pi(Tx))=Tx+F^{\perp}=Tx$ is $w^{*}-$closed
convex, i.e., $\pi(Tx)$ is closed convex in $(X^{*},w^{*})/F^{\perp}$,
for every $x\in D(T)$.

Since $F$ is closed in $X$ the locally convex spaces $(X^{*},w^{*})/F^{\perp}$
and $(F^{*},w^{*})$ are isomorphic via $j:(X^{*},w^{*})/F^{\perp}\rightarrow(F^{*},w^{*})$,
$j(x^{*}+F^{\perp})=x^{*}|_{F}$, $x^{*}\in X^{*}$. Notice that $T_{F}x=\iota_{F}^{*}(Tx)=j(\pi(Tx))$,
$x\in D(T)$, to conclude that $T_{F}$ has $w^{*}-$closed convex
values. \end{proof}

\begin{remark} If $T$ has $w^{*}-$closed values in $X^{*}$ but
$F$ is not closed then one cannot expect $T_{F}$ to have $w^{*}-$closed
values in $F^{*}$ even though $T=T+N_{F}$. Indeed, let $F$ be a
proper dense linear subspace of a separated locally convex space $X$.
Then $\iota_{F}^{*}:(X^{*},w^{*})\rightarrow(F^{*},w^{*})$ is a continuous
bijection but not an isomorphism, since $F\subsetneq X$. Take $S$
a $w^{*}-$closed subset of $X^{*}$ such that $\iota_{F}^{*}(S)$
is not $w^{*}-$closed in $F^{*}$ and $T:=\{0\}\times S\subset X\times X^{*}$.
Then $T_{F}=\{0\}\times\iota_{F}^{*}(S)$ does not have $w^{*}-$closed
values (and implicitly is not $s\times w^{*}-$closed in $F\times F^{*}$)
while $T$ is $s\times w^{*}-$closed in $X\times X^{*}$, has $w^{*}-$closed
values, and $T=T+N_{F}$, since $N_{F}=F\times\{0\}$. \end{remark} 

\begin{remark} Let $T:X\rightrightarrows X^{*}$ and let $A\subset X$
be affine with $D(T)\subset A$. It is trivial to check that $T_{A,z}$
has convex values, for every $z\in A$ whenever $T$ has convex values.
Conversely, $T$ has convex values whenever $T_{A,z}$ has convex
values, for some $z\in A$, provided, in addition, that $T=T+N_{A}$.
\end{remark} 

\begin{proposition} \label{sxw*}Let $X$ be a separated locally
convex space, let $T:X\rightrightarrows X^{*}$ be such that $T=T+N_{A}$,
where $A\subset X$ is an affine set such that $D(T)\subset A$, and
let $F$ be the linear subspace parallel to $A$. Then $T$ is $s\times w^{*}-$closed
in $X\times X^{*}$ whenever $T_{A,z}$ is $s\times w^{*}-$closed
in $F\times F^{*}$, for some $z\in A$. If, in addition, $A$ is
closed then $T_{A,z}$ is $s\times w^{*}-$closed in $F\times F^{*}$,
for every $z\in A$ provided that $T$ is $s\times w^{*}-$closed
in $X\times X^{*}$. \end{proposition}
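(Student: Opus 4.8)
The plan is to repeat the argument of Proposition~\ref{val-inchise}, now at the level of graphs rather than values. First translate by $-z$: since $T$ is $s\times w^{*}$-closed in $X\times X^{*}$ iff $T_{z}$ is, and $T_{A,z}=(T_{z})_{F}$ (translations preserving $s\times w^{*}$-closedness), we may assume $z=0\in A=F$, so that $T_{A,z}=T_{F}$. Everything rests on the observation that $(\iota_{F}^{*})^{-1}(T_{F}x)=Tx+F^{\perp}=Tx$ for $x\in D(T)\subset F$, the middle equality being exactly the hypothesis $T=T+N_{F}$; together with $D(T_{F})=D(T)$ this yields
\[
\operatorname{Graph}T=(\operatorname{id}_{X}\times\iota_{F}^{*})^{-1}(\operatorname{Graph}T_{F}),\qquad \operatorname{Graph}T_{F}=(\operatorname{id}_{X}\times\iota_{F}^{*})(\operatorname{Graph}T),
\]
and shows moreover that $\operatorname{Graph}T$ is saturated for $\Phi:=\operatorname{id}_{X}\times\iota_{F}^{*}$, i.e.\ $\Phi^{-1}(\Phi(\operatorname{Graph}T))=\operatorname{Graph}T$.

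For the first implication, $\iota_{F}^{*}:(X^{*},w^{*})\to(F^{*},w^{*})$ is continuous, hence $\Phi:(X,s)\times(X^{*},w^{*})\to(X,s)\times(F^{*},w^{*})$ is continuous, so $\operatorname{Graph}T$, being $\Phi^{-1}$ of the $s\times w^{*}$-closed set $\operatorname{Graph}T_{F}$, is $s\times w^{*}$-closed. Spelled out with nets: if $(x_{i},x_{i}^{*})\in T$ with $x_{i}\to x_{0}$ strongly and $x_{i}^{*}\to x_{0}^{*}$ weakly-star, then $x_{i}^{*}|_{F}\to x_{0}^{*}|_{F}$ weakly-star in $F^{*}$ and $(x_{i},x_{i}^{*}|_{F})\in\operatorname{Graph}T_{F}$, so closedness of $T_{F}$ forces $(x_{0},x_{0}^{*}|_{F})\in\operatorname{Graph}T_{F}$, i.e.\ $x_{0}\in D(T)$ and $x_{0}^{*}\in(\iota_{F}^{*})^{-1}(T_{F}x_{0})=Tx_{0}$.

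For the converse, assume in addition that $A$, equivalently $F$, is closed. As in Proposition~\ref{val-inchise}, closedness of $F$ makes the canonical map $\pi:(X^{*},w^{*})\to(X^{*},w^{*})/F^{\perp}$ a continuous open surjection and $j:(X^{*},w^{*})/F^{\perp}\to(F^{*},w^{*})$, $j(x^{*}+F^{\perp})=x^{*}|_{F}$, a topological isomorphism, with $\iota_{F}^{*}=j\circ\pi$; hence $\iota_{F}^{*}$, and with it $\Phi$, is a continuous open surjection, that is, a topological quotient map. Since $\operatorname{Graph}T$ is $\Phi$-saturated and $s\times w^{*}$-closed, its image $\Phi(\operatorname{Graph}T)=\operatorname{Graph}T_{F}$ is closed in $(X,s)\times(F^{*},w^{*})$, hence closed in the subspace $(F,s)\times(F^{*},w^{*})$. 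Translating back yields the conclusion for every $z\in A$.

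The main obstacle is this converse, and specifically the claim that $\iota_{F}^{*}$ is a topological quotient map for the weak-star topologies --- equivalently, that weak-star convergent nets in $F^{*}$ lift (along subnets) to weak-star convergent nets in $X^{*}$. This is exactly where closedness of $A$ is indispensable: it supplies the isomorphism $j$ and, in the net picture, it is what keeps a strong limit of points of $D(T)\subset F$ inside $F$. One should resist lifting via Banach--Alaoglu, since a weak-star convergent net in $F^{*}$ need not be bounded; the quotient-map/saturated-set route avoids this. The only routine verifications are that a product of open continuous surjections is again one (so $\Phi$ inherits the property from $\iota_{F}^{*}$) and that a topological quotient map sends saturated closed sets to closed sets.
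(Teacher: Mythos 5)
Your argument is, in substance, the paper's own: the paper introduces exactly the map $L=\operatorname{id}_{X}\times\iota_{F}^{*}$, proves the forward direction from $\operatorname{Graph}T=L^{-1}(\operatorname{Graph}T_{F})$ (your saturation identity), and proves the converse by factoring $L=j\circ\Pi$ through the quotient $(X\times X^{*},s\times w^{*})/(\{0\}\times F^{\perp})$ --- which is precisely your ``continuous open surjection, hence topological quotient map, sends saturated closed sets to closed sets'' route. The only difference is cosmetic packaging.

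There is, however, one delicate point in the forward implication that you (and, it must be said, the paper's own proof) pass over, and it is worth flagging because you put your finger on the relevant phenomenon but assign it to the wrong direction. The inference ``$\operatorname{Graph}T=\Phi^{-1}(\operatorname{Graph}T_{F})$ is closed because $\Phi$ is continuous and $\operatorname{Graph}T_{F}$ is closed'' needs $\operatorname{Graph}T_{F}$ to be closed in the \emph{codomain} of $\Phi$, namely $(X,s)\times(F^{*},w^{*})$, whereas the hypothesis only provides closedness in $(F,s)\times(F^{*},w^{*})$. In your net version this is the step ``closedness of $T_{F}$ forces $(x_{0},x_{0}^{*}|_{F})\in\operatorname{Graph}T_{F}$'': that conclusion is available only if the strong limit $x_{0}$ of the points $x_{i}\in D(T)\subset F$ again lies in $F$. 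You correctly remark that closedness of $F$ is ``what keeps a strong limit of points of $D(T)\subset F$ inside $F$,'' but you attribute that role solely to the converse; it is already indispensable in the forward direction. Indeed, take $F$ a proper dense linear subspace and $T=F\times\{0\}$: then $N_{F}=F\times\{0\}$ so $T=T+N_{F}$, and $\operatorname{Graph}T_{F}=F\times\{0\}$ is closed in $F\times F^{*}$, yet $\operatorname{Graph}T=F\times\{0\}$ is not closed in $X\times X^{*}$. So the first implication requires either that $A$ be closed as well, or that the closedness of $\operatorname{Graph}T_{A,z}$ be taken in $X\times F^{*}$, or some other hypothesis guaranteeing that strong limits of points of $D(T)$ stay in $A$. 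With such an assumption in force, your proof (and the paper's) is complete.
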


\begin{proof} We may assume without loss of generality that $z=0\in A=F$.
In this case $T_{A,z}=T_{F}$. Let $L:(X\times X^{*},s\times w^{*})\rightarrow(X\times F^{*},s\times w^{*})$,
$L(x,x^{*})=(x,x^{*}|_{F})$. Then $L$ is linear bounded and $\operatorname*{Graph}T=L^{-1}(\operatorname*{Graph}T_{F})$
since $T=T+N_{F}$. Therefore $T$ is $s\times w^{*}-$closed in $X\times X^{*}$
whenever $T_{F}$ is $s\times w^{*}-$closed in $F\times F^{*}$.

Assume that $F$ is closed and $T$ is $s\times w^{*}-$closed in
$X\times X^{*}$. Let $\Pi:(X\times X^{*},s\times w^{*})\rightarrow(X\times X^{*},s\times w^{*})/\{0\}\times F^{\perp}$,
$\Pi(x,x^{*})=(x,x^{*})+\{0\}\times F^{\perp}$, be the projection
map of $(X\times X^{*},s\times w^{*})$ onto the quotient space $(X\times X^{*},s\times w^{*})/\{0\}\times F^{\perp}$.
Note that $\Pi^{-1}(\Pi(\operatorname*{Graph}T))=\operatorname*{Graph}T+\{0\}\times F^{\perp}=\operatorname*{Graph}T$
is $s\times w^{*}-$closed, that is, $\Pi(\operatorname*{Graph}T)$
is closed in $(X\times X^{*},s\times w^{*})/\{0\}\times F^{\perp}$.
The locally convex spaces $(X\times X^{*},s\times w^{*})/\{0\}\times F^{\perp}$
and $(X\times F^{*},s\times w^{*})$ are isomorphic via $j((x,x^{*})+\{0\}\times F^{\perp})=(x,x^{*}|_{F})$
since $F$ is closed. Using $L=j\circ\Pi$ we get that $\operatorname*{Graph}T_{F}=L(\operatorname*{Graph}T)=j(\Pi(\operatorname*{Graph}T))$
is $s\times w^{*}-$closed in $F\times F^{*}$. \end{proof}

\begin{proposition} \label{demi}Let $(X,\|\cdot\|)$ be a normed
space, let $T:X\rightrightarrows X^{*}$, let $A\subset X$ be an
affine set such that $D(T)\subset A$, and let $F$ be the linear
subspace parallel to $A$. If $T$ is demiclosed in $X\times X^{*}$
then $T_{A,z}$ is demiclosed in $F\times F^{*}$, for every $z\in A$.
If, in addition, $T=T+N_{A}$, then $T$ is demiclosed in $X\times X^{*}$
whenever $T_{A,z}$ is demiclosed in $F\times F^{*}$, for some $z\in A$.
\end{proposition}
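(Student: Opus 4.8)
The plan is to reduce, as in the proofs of Propositions \ref{val-inchise} and \ref{sxw*}, to the case $z=0\in A=F$, so that $T_{A,z}=T_{F}$ and $\operatorname*{Graph}T_{F}=L(\operatorname*{Graph}T)$, where $L:(X\times X^{*},s\times w^{*})\rightarrow(X\times F^{*},s\times w^{*})$ is $L(x,x^{*})=(x,x^{*}|_{F})$; under the extra hypothesis $T=T+N_{F}$ one also has $\operatorname*{Graph}T=L^{-1}(\operatorname*{Graph}T_{F})$. The essential point that distinguishes this proposition from the previous two is that demiclosedness is about convergence of \emph{bounded} nets, so I must track boundedness through $L$ and through any pre-images, and I cannot simply invoke a quotient-space isomorphism argument for closedness (indeed $L$ need not be a quotient map onto $F^{*}$ with the $w^{*}$ topologies unless $F$ is closed, but for this direction closedness of $F$ is not needed).

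First I would prove the implication that always holds: if $T$ is demiclosed in $X\times X^{*}$ then $T_{F}$ is demiclosed in $F\times F^{*}$. Take a bounded net $\{(f_{i},f_{i}^{*})\}_{i}\subset T_{F}$ with $f_{i}\to f_{0}$ strongly in $F$ (equivalently in $X$, since $F$ carries the restricted norm) and $f_{i}^{*}\to f_{0}^{*}$ weakly-star in $F^{*}$, with $(f_{i}^{*})_{i}$ norm-bounded in $F^{*}$. By definition of $T_{F}$ choose $x_{i}^{*}\in T(f_{i})$ with $x_{i}^{*}|_{F}=f_{i}^{*}$. The difficulty here is that $x_{i}^{*}$ need not be norm-bounded in $X^{*}$ merely because its restriction to $F$ is. To fix this one passes to a bounded selection: since $\|f_{i}^{*}\|_{F^{*}}\le M$, by Hahn--Banach extend $f_{i}^{*}$ to $y_{i}^{*}\in X^{*}$ with $\|y_{i}^{*}\|_{X^{*}}=\|f_{i}^{*}\|_{F^{*}}\le M$; then $y_{i}^{*}-x_{i}^{*}\in F^{\perp}=N_{F}(f_{i})$, so $y_{i}^{*}\in T(f_{i})+N_{F}(f_{i})=(T+N_{F})(f_{i})$. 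If $T=T+N_{F}$ this gives $y_{i}^{*}\in T(f_{i})$ directly; if not, I instead argue that $T$ demiclosed implies $T+N_{F}$ demiclosed (this is precisely the content of the other direction, applied with $T$ in place of $T+N_{F}$ — but one must be careful the logic is not circular, so it is cleanest to first establish ``$T$ demiclosed $\Rightarrow$ $T+N_{F}$ demiclosed'' as a standalone observation, using that $N_{F}=F\times F^{\perp}$ has $w^{*}$-closed values and adding a $w^{*}$-convergent bounded net from $F^{\perp}$ stays in $F^{\perp}$). In either case $(f_{0},y_{i}^{*})$-type reasoning: the net $\{y_{i}^{*}\}$ is norm-bounded, hence by Banach--Alaoglu has a $w^{*}$-convergent subnet $y_{j}^{*}\to y_{0}^{*}$; by demiclosedness of $T+N_{F}$ (resp. $T$) we get $y_{0}^{*}\in (T+N_{F})(f_{0})$; and $y_{0}^{*}|_{F}=\lim f_{j}^{*}|_{F}=\lim f_{j}^{*}=f_{0}^{*}$ because $\iota_{F}^{*}$ is $w^{*}$-$w^{*}$ continuous. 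Hence $(f_{0},f_{0}^{*})\in (T+N_{F})_{F}=T_{F}$ (using $(T+N_{F})_{F}=T_{F}$, which follows from $N_{F}$ contributing only $F^{\perp}$, killed under restriction).

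For the converse, assume $T=T+N_{F}$ and $T_{F}$ demiclosed in $F\times F^{*}$. Let $\{(x_{i},x_{i}^{*})\}_{i}\subset T$ be a bounded net with $x_{i}\to x_{0}$ strongly in $X$ and $x_{i}^{*}\to x_{0}^{*}$ weakly-star in $X^{*}$, $(x_{i}^{*})_{i}$ norm-bounded. Since $D(T)\subset F$ and $F$ is norm-closed (as a \emph{normed}-space subspace need not be closed — but $x_{i}\in D(T)\subset F$, so $x_{i}\in F$ for all $i$; strong convergence then forces $x_{0}\in\operatorname*{cl}F$, and since $T=T+N_{F}$ forces $D(T)\subset F$, I should note that to land in $T_{F}$ I need $x_{0}\in F$, which holds automatically when $F$ is the parallel subspace of a closed $A$, but in general requires care — however for the conclusion $(x_{0},x_{0}^{*})\in T$ to even make sense we need $x_{0}\in D(T)\subset F$, and this is forced once we show $T_{F}$ produces it). Apply $L$: $(x_{i},x_{i}^{*}|_{F})\in T_{F}$, the net is still bounded since $\|x_{i}^{*}|_{F}\|_{F^{*}}\le\|x_{i}^{*}\|_{X^{*}}$, $x_{i}\to x_{0}$ strongly in $F$, and $x_{i}^{*}|_{F}\to x_{0}^{*}|_{F}$ weakly-star in $F^{*}$ by $w^{*}$-$w^{*}$ continuity of $\iota_{F}^{*}$. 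Demiclosedness of $T_{F}$ gives $(x_{0},x_{0}^{*}|_{F})\in T_{F}$, so in particular $x_{0}\in D(T)$ and there is $x_{1}^{*}\in T(x_{0})$ with $x_{1}^{*}|_{F}=x_{0}^{*}|_{F}$, i.e. $x_{0}^{*}-x_{1}^{*}\in F^{\perp}=N_{F}(x_{0})$; then $x_{0}^{*}=x_{1}^{*}+(x_{0}^{*}-x_{1}^{*})\in T(x_{0})+N_{F}(x_{0})=(T+N_{F})(x_{0})=T(x_{0})$. This completes the proof. The main obstacle, as flagged, is the boundedness bookkeeping in the first direction: one cannot lift a bounded net from $F^{*}$ to a bounded net in $X^{*}$ landing in $T$ without either the hypothesis $T=T+N_{F}$ or the auxiliary lemma that demiclosedness is inherited by $T+N_{F}$; getting this sequencing right (and noticing $(T+N_{F})_{F}=T_{F}$) is the crux.
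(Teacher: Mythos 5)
Your second implication is correct and is essentially the paper's own argument: restrict the bounded net by $\iota_{F}^{*}$, apply demiclosedness of $T_{F}$, and absorb the $F^{\perp}$-discrepancy using $T=T+N_{F}$. The genuine gap is in the first implication. You correctly isolate the crux --- the norm-preserving Hahn--Banach extension $y_{i}^{*}$ of $f_{i}^{*}$ lands in $(T+N_{F})(f_{i})$, not necessarily in $T(f_{i})$ --- but your proposed repair, the ``standalone observation'' that demiclosedness passes from $T$ to $T+N_{F}$, is false. Writing a bounded net $(x_{i},z_{i}^{*})\in T+N_{F}$ as $z_{i}^{*}=x_{i}^{*}+u_{i}^{*}$ with $x_{i}^{*}\in Tx_{i}$ and $u_{i}^{*}\in F^{\perp}$ gives no control on $\|x_{i}^{*}\|$ or $\|u_{i}^{*}\|$ separately; this is exactly the boundedness problem you flagged one level up, and it does not disappear by passing to $T+N_{F}$. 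Concretely, take $X=\mathbb{R}^{2}$, $F=\{0\}\times\mathbb{R}$, and $\operatorname*{Graph}T=\{((0,1/n),(n,0))\mid n\ge1\}$: then $T$ is monotone with $D(T)\subset F$ and is demiclosed (a bounded net in $\operatorname*{Graph}T$ takes values in a finite set, so its limit stays in the graph), yet $T+N_{F}$ contains the bounded sequence $((0,1/n),(0,0))$ converging to $((0,0),(0,0))\notin T+N_{F}$, and $T_{F}\cong\{(1/n,0)\mid n\ge1\}$ is not demiclosed in $F\times F^{*}\cong\mathbb{R}\times\mathbb{R}$.

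The same example shows that the first implication cannot be rescued by any argument at all without the hypothesis $T=T+N_{A}$: one must know that the bounded Hahn--Banach extensions actually lie in $Tx_{i}$, i.e.\ that $Tx_{i}+F^{\perp}=Tx_{i}$. (The paper's own proof applies demiclosedness of $T$ directly to the net of norm-preserving extensions, which is legitimate only under that same identity, so your instinct that something is missing there was right; but the fix has to be the hypothesis itself, not the lemma you propose.) A secondary point you half-noticed: in the converse direction one also needs the strong limit $x_{0}$ of $x_{i}\in D(T)\subset F$ to lie in $F$ before demiclosedness of $T_{F}$ in $F\times F^{*}$ can be invoked, which requires $F$ closed --- with $F$ a proper dense subspace and $T=F\times\{0\}=T+N_{F}$, $T_{F}$ is demiclosed while $T$ is not. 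Both extra hypotheses ($T=T+N_{\operatorname*{aff}D(T)}$ and closedness of $\operatorname*{aff}D(T)$) are in force in Corollary \ref{swr}, where the proposition is actually used, so your reduction is sound in that setting; but as a proof of the first implication as stated, the argument has a gap that the auxiliary lemma cannot close.
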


\begin{proof} Up to a translation we may assume without loss of generality
that $z=0\in A=F$. In this case $T_{A,z}=T_{F}$. 

Assume that $T$ is demiclosed in $X\times X^{*}$. Let $\{(x_{i},f_{i}^{*})\}_{i\in I}\subset T_{F}$
be such that $(x_{i},f_{i}^{*})\rightarrow(x,f^{*})$, $s\times w^{*}$
in $F\times F^{*}$ and $(f_{i}^{*})_{i}$ is (strongly) bounded in
$F^{*}$. According to the Hahn-Banach Theorem, for every $i\in I$
there is $x_{i}^{*}\in X^{*}$ such that $x_{i}^{*}|_{F}=f_{i}^{*}$
and $\|x_{i}^{*}\|=\|f_{i}^{*}\|$. Hence $(x_{i}^{*})_{i}$ is bounded
in $X^{*}$ and, eventually on a subnet, denoted by the same index
for simplicity, $(x_{i},x_{i}^{*})\rightarrow(x,x^{*})$, $s\times w^{*}$
in $X\times X^{*}$. This yields that $x^{*}|_{F}=f^{*}$ and $x^{*}\in Tx$
because $T$ is demiclosed; whence $f^{*}\in T_{F}x$. Therefore $T_{F}$
is demiclosed in $F\times F^{*}$.

Assume that $T=T+N_{F}$ and $T_{F}$ is demiclosed in $F\times F^{*}$.
Let $\{(x_{i},x_{i}^{*})\}_{i\in I}\subset T$ be such that $(x_{i},x_{i}^{*})\rightarrow(x,x^{*})$,
$s\times w^{*}$ in $X\times X^{*}$ and $(x_{i}^{*})_{i}$ is bounded
in $X^{*}$. Then $(x_{i},x_{i}^{*}|_{F})\rightarrow(x,x^{*}|_{F})$,
$s\times w^{*}$ in $F\times F^{*}$ and $(x_{i}^{*}|_{F})_{i}$ is
bounded in $F^{*}$. Due to the demiclosedness of $T_{F}$ we find
that $x^{*}|_{F}\in T_{F}x$, that is, there is $y^{*}\in Tx$ such
that $x^{*}|_{F}=y^{*}|_{F}$. This implies $x^{*}\in y^{*}+F^{\perp}\subset Tx$.
\end{proof}

\begin{remark} Under all the assumptions in Proposition \ref{demi},
$T_{F}$ is demiclosed in $F^{*}$ on a subset $S\subset\overline{D(T)}$
iff $T$ is demiclosed in $X^{*}$ on $S$ (that is, if $\{(x_{i},x_{i}^{*})\}_{i}\subset T$,
$(x_{i},x_{i}^{*})\rightarrow(x,x^{*})$ $s\times w^{*}$ in $X\times X^{*}$
and $x\in S$ then $(x,x^{*})\in T$).\end{remark} 

\begin{corollary} \label{swr}Let $X$ be a normed space, let $T:X\rightrightarrows X^{*}$
be such that $\operatorname*{aff}D(T)$ is closed and $T=T+N_{\operatorname*{aff}D(T)}$,
and let $F$ be the linear subspace parallel to $\operatorname*{aff}D(T)$.
Then

\emph{(i)} $T$ has $w^{*}-$closed values in $X^{*}$ iff $T_{\operatorname*{aff}D(T),z}$
has $w^{*}-$closed values in $F^{*}$, for every (some) $z\in\operatorname*{aff}D(T)$;

\emph{(ii)} $T$ is $s\times w^{*}-$closed in $X\times X^{*}$ iff
$T_{\operatorname*{aff}D(T),z}$ is $s\times w^{*}-$closed in $F\times F^{*}$,
for every (some) $z\in\operatorname*{aff}D(T)$; 

\emph{(iii)} $T$ is demiclosed in $X\times X^{*}$ iff $T_{\operatorname*{aff}D(T),z}$
is demiclosed in $F\times F^{*}$, for every (some) $z\in\operatorname*{aff}D(T)$.
\end{corollary}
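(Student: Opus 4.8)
The plan is to derive Corollary~\ref{swr} as a direct specialization of Propositions~\ref{val-inchise}, \ref{sxw*}, and \ref{demi} to the case $A=\operatorname*{aff}D(T)$. First I would observe that in all three propositions one always has $D(T)\subset A$ automatically when $A=\operatorname*{aff}D(T)$, so that hypothesis is free; the standing assumptions of the corollary then supply exactly the remaining hypotheses of each proposition, namely that $A=\operatorname*{aff}D(T)$ is closed and that $T=T+N_{A}$. With these in force, each of the three propositions becomes a two-sided (iff) statement: the ``only if'' direction (properties pass from $T$ to $T_{A,z}$ for every $z\in A$) needs only $T=T+N_A$ together with closedness of $A$, while the ``if'' direction (properties pass from $T_{A,z}$ for some $z\in A$ back to $T$) needs only $T=T+N_A$.

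Concretely, for part~(i) I would apply Proposition~\ref{val-inchise}: its second sentence gives ``$T$ has $w^{*}$-closed values $\Rightarrow$ $T_{A,z}$ has $w^{*}$-closed values for every $z\in A$'' (using that $A$ is closed), and its first sentence gives the converse from a single $z\in A$ (using $T=T+N_A$); together these yield the ``for every (some)'' iff. Part~(ii) is obtained identically from Proposition~\ref{sxw*}, and part~(iii) from Proposition~\ref{demi}, with the same bookkeeping of which hypothesis feeds which direction. In each case I would note explicitly that $F$ is the linear subspace parallel to $A=\operatorname*{aff}D(T)$, matching the $F$ in the cited proposition, and that $T_{\operatorname*{aff}D(T),z}=T_{A,z}$ by the definition \eqref{def TAz}.

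There is essentially no obstacle here; the only thing to be careful about is the logical packaging of ``for every (some) $z$'': the forward implication yields the property for \emph{every} $z\in A$, the backward implication only requires it for \emph{some} $z\in A$, and since ``for every'' trivially implies ``for some'' (as $A=\operatorname*{aff}D(T)\neq\emptyset$ because $D(T)\neq\emptyset$), the chain closes into a genuine equivalence. So the write-up is just: fix $F$ parallel to $\operatorname*{aff}D(T)$; invoke Proposition~\ref{val-inchise} for (i), Proposition~\ref{sxw*} for (ii), and Proposition~\ref{demi} for (iii), in each case combining the two implications already proved there under the present hypotheses $A$ closed and $T=T+N_A$.

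\begin{proof} Set $A:=\operatorname*{aff}D(T)$, so that $D(T)\subset A$, $A$ is closed, $T=T+N_{A}$, $A\neq\emptyset$ (since $D(T)\neq\emptyset$), and $F$ is the linear subspace parallel to $A$; by \eqref{def TAz}, $T_{\operatorname*{aff}D(T),z}=T_{A,z}$ for $z\in A$.

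\emph{(i)} If $T$ has $w^{*}-$closed values in $X^{*}$ then, since $A$ is closed, Proposition~\ref{val-inchise} yields that $T_{A,z}$ has $w^{*}-$closed values in $F^{*}$ for every $z\in A$. Conversely, if $T_{A,z}$ has $w^{*}-$closed values in $F^{*}$ for some $z\in A$ then, since $T=T+N_{A}$, Proposition~\ref{val-inchise} yields that $T$ has $w^{*}-$closed values in $X^{*}$. As $A\neq\emptyset$, ``for every $z\in A$'' implies ``for some $z\in A$'', and (i) follows.

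\emph{(ii)} Apply Proposition~\ref{sxw*} in the same manner: closedness of $A$ gives that $T$ $s\times w^{*}-$closed in $X\times X^{*}$ implies $T_{A,z}$ is $s\times w^{*}-$closed in $F\times F^{*}$ for every $z\in A$, while $T=T+N_{A}$ gives the converse from some $z\in A$; combining, and using $A\neq\emptyset$, yields (ii).

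\emph{(iii)} Apply Proposition~\ref{demi}: if $T$ is demiclosed in $X\times X^{*}$ then $T_{A,z}$ is demiclosed in $F\times F^{*}$ for every $z\in A$ (no extra hypothesis needed here), and if $T_{A,z}$ is demiclosed in $F\times F^{*}$ for some $z\in A$ then, since $T=T+N_{A}$, $T$ is demiclosed in $X\times X^{*}$; combining, and using $A\neq\emptyset$, yields (iii). \end{proof}
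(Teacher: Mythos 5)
Your proposal is correct and is exactly the intended argument: the paper states Corollary~\ref{swr} without proof precisely because it is the immediate combination of Propositions~\ref{val-inchise}, \ref{sxw*}, and \ref{demi} under the standing hypotheses $A=\operatorname*{aff}D(T)$ closed and $T=T+N_{A}$ (with $D(T)\subset A$ automatic). Your bookkeeping of which hypothesis feeds which direction, and of the ``for every (some)'' packaging, matches the propositions as stated.
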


In the case of a finite-dimensional affine set passing through $z$
and being spanned by the linearly independent set of directions $\{v_{1},v_{2},\ldots,v_{d}\}$
\begin{equation}
A:=A(z;v_{1},v_{2},..,v_{d}):=\{x=z+t_{1}v_{1}+..+t_{d}v_{d}\mid(t_{1},..,t_{d})\in\mathbb{R}^{d}\},\label{def A fin dim}\end{equation}
we associate to $T_{A,z}$ the finite-dimensional operator $\mathscr{T}_{A,z}:\mathbb{R}^{d}\rightrightarrows\mathbb{R}^{d}$
given by \begin{equation}
(s_{1},..,s_{d})\in\mathscr{T}_{A,z}(t_{1},..,t_{d})\ {\rm iff}\ \exists x^{*}\in T(z+t_{1}v_{1}+..+t_{d}v_{d}):\,\, s_{i}=\langle v_{i},x^{*}\rangle,\,\, i=\overline{1,d}.\label{def TAz fin dim}\end{equation}
 Note that $\mathscr{T}_{A,z}=\mathscr{I}^{*}T_{A,z}\mathscr{I}$,
where $\mathscr{I}:\mathbb{R}^{d}\rightarrow F:=\operatorname*{span}\{v_{1},..,v_{d}\}$
is the isomorphism given by $\mathscr{I}(t_{1},..,t_{d}):=t_{1}v_{1}+..+t_{d}v_{d}$.
This latter operator identity provides \begin{equation}
\varphi_{\mathscr{T}_{A,z}}((t_{1},..,t_{d}),(s_{1},..,s_{d}))=\varphi_{T_{A,z}}(t_{1}v_{1}+..+t_{d}v_{d},x^{*}),\label{fi-rel}\end{equation}
where $x^{*}\in F^{*}$ is uniquely determined by $s_{i}=\langle v_{i},x^{*}\rangle$,
$i=\overline{1,d}$.

Alternately, $\mathscr{T}_{A,z}=J^{*}T_{z}J:D(\mathscr{T}_{A,z}):=J^{-1}(D(T)-z)\subset\mathbb{R}^{d}\rightrightarrows\mathbb{R}^{d}$,
where $J:=\iota_{F}\circ\mathscr{J}:\mathbb{R}^{d}\rightarrow X$,
$J(t_{1},..,t_{d}):=t_{1}v_{1}+..+t_{d}v_{d}$.

Notice also that $\mathscr{T}_{A,z_{2}}=(\mathscr{T}_{A,z_{1}})_{J^{-1}(z_{2}-z_{1})}$,
for every $z_{1},z_{2}\in A$. 

\begin{lemma} \label{red-fin dim} Let $X$ be a  normed space, $T:X\rightrightarrows X^{*}$,
$z\in X$, $\{v_{1},..,v_{d}\}$ be a linearly independent subset
of $X$, $F=\operatorname*{span}\{v_{1},..,v_{d}\}$, and $A=A(z;v_{1},..,v_{d})$.
Then

\emph{(i)} $\mathscr{T}_{A,z}$ is NI iff $T_{A,z}$ is NI,

\medskip

\emph{(ii)} $\mathscr{T}_{A,z}\in\mathscr{M}(\mathbb{R}^{d})$ iff
$T_{A,z}\in\mathscr{M}(F)$ iff $T+N_{A}\in\mathscr{M}(X)$.

\end{lemma}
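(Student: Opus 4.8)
The plan is to reduce Lemma~\ref{red-fin dim} to the structural lemmas already established, using the operator identity $\mathscr{T}_{A,z}=\mathscr{I}^{*}T_{A,z}\mathscr{I}$ (equivalently $\mathscr{T}_{A,z}=J^{*}T_{z}J$), where $\mathscr{I}\colon\mathbb{R}^{d}\to F$ is the linear isomorphism sending the standard basis to $\{v_{1},\dots,v_{d}\}$. The point is that $\mathscr{I}$ is a topological isomorphism between $(\mathbb{R}^{d},\|\cdot\|_{2})$ and the finite-dimensional (hence closed) subspace $F$ equipped with its norm from $X$, and its adjoint $\mathscr{I}^{*}\colon F^{*}\to(\mathbb{R}^{d})^{*}\cong\mathbb{R}^{d}$ is also an isomorphism. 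Conjugation of a multifunction by a pair consisting of a linear isomorphism and its adjoint preserves monotonicity, maximal monotonicity, and the NI property, because the defining inequalities transform verbatim: for $t,t'\in\mathbb{R}^{d}$ and the associated $x=\mathscr{I}t$, $x'=\mathscr{I}t'$, one has $\langle t-t',s-s'\rangle_{\mathbb{R}^{d}}=\langle \mathscr{I}t-\mathscr{I}t',x^{*}-x'^{*}\rangle_{F}$ whenever $s=\mathscr{I}^{*}x^{*}$, $s'=\mathscr{I}^{*}x'^{*}$.

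For part~(i), I would argue directly from the identity \eqref{fi-rel}: it shows that $(\,(t_{i}),(s_{i})\,)$ lies in $[\varphi_{\mathscr{T}_{A,z}}\le c]$ if and only if the corresponding point $(\sum t_{i}v_{i},x^{*})\in F\times F^{*}$ lies in $[\varphi_{T_{A,z}}\le c]$, and moreover the coupling value $c$ is matched on both sides since $\sum_i s_i t_i = \langle \sum_i t_i v_i, x^*\rangle$. Since $\mathscr{I}^{*}$ is onto $(\mathbb{R}^d)^*$, every point of $\mathbb{R}^{d}\times\mathbb{R}^{d}$ arises this way, so $\varphi_{\mathscr{T}_{A,z}}\ge c$ everywhere iff $\varphi_{T_{A,z}}\ge c$ everywhere, which is exactly the equivalence ``$\mathscr{T}_{A,z}$ is NI iff $T_{A,z}$ is NI.'' Alternatively one can observe that being NI (i.e.\ $\varphi\ge c$, equivalently every point m.r.\ to $T$ already has $c(z-a)\ge 0$ throughout) is an elementary bilinear condition preserved under the substitution induced by $\mathscr{I}$.

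For part~(ii), the first equivalence $\mathscr{T}_{A,z}\in\mathscr{M}(\mathbb{R}^{d})\iff T_{A,z}\in\mathscr{M}(F)$ follows from the isomorphism argument above: maximality is the statement that no point of the ambient space is m.r.\ to the graph without belonging to it, and this is transported faithfully by the pair $(\mathscr{I},\mathscr{I}^{*})$; one checks that $(t,s)$ is m.r.\ to $\mathscr{T}_{A,z}$ iff $(\mathscr{I}t,x^{*})$ is m.r.\ to $T_{A,z}$ (with $s=\mathscr{I}^{*}x^{*}$), and that every element of $F^{*}$ is of the form $x^{*}$ for a unique preimage, so the two maximality conditions coincide. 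The second equivalence $T_{A,z}\in\mathscr{M}(F)\iff T+N_{A}\in\mathscr{M}(X)$ is precisely Lemma~\ref{afin} applied with the affine set $A=A(z;v_{1},\dots,v_{d})$, which is finite-dimensional and therefore \emph{closed} in $X$; this lets me invoke both directions of that lemma, giving the full ``iff'' for some (hence every, via the translation remark $\mathscr{T}_{A,z_2}=(\mathscr{T}_{A,z_1})_{J^{-1}(z_2-z_1)}$) choice of base point $z\in A$. The only mild subtlety—and the step I'd be most careful about—is verifying that closedness of $F$ (automatic in finite dimensions) is what makes Lemma~\ref{afin}'s converse direction available, and checking that $N_{A}$ in the $X$-setting corresponds under the identification to the restriction setup in \eqref{def TAz fin dim}; once closedness is in hand, everything reduces cleanly to the lemmas already proved.
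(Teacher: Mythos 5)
Your argument is correct and is essentially the proof the paper intends (the paper leaves it implicit): part (i) follows from the identity \eqref{fi-rel} together with the matching of the couplings under the isomorphism pair $(\mathscr{I},\mathscr{I}^{*})$, and part (ii) combines that same transport of (maximal) monotonicity with both directions of Lemma \ref{afin}, which are available precisely because the finite-dimensional affine set $A$ is closed. Your attention to the closedness hypothesis and to the surjectivity of $\mathscr{I}^{*}$ covers the only points that need checking.
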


In the sequel, for $v\neq0$, $z\in X$, $L:=L(z;v):=A(z;v)$ is the
line passing through $z$ with direction $v$, $T_{L,z}(tv):=\{x^{*}|_{\mathbb{R}v}\mid x^{*}\in T(z+tv)\}$,
$t\in\mathbb{R}$, and $\mathscr{T}_{L,z}:\mathbb{R}\rightrightarrows\mathbb{R}$,
$s\in\mathscr{T}_{L,z}t$ if there is $x^{*}\in T(z+tv)$ such that
$\langle v,x^{*}\rangle=s$.

\medskip

Similarly, the plane passing through $z$ with linearly independent
set of directions $\{v_{1},v_{2}\}$ is given by $P:=P(z;v_{1},v_{2}):=A(z;v_{1},v_{2})$,
$T_{P,z}(t_{1}v_{1}+t_{2}v_{2})=\{x^{*}|_{\operatorname*{span}\{v_{1},v_{2}\}}\mid x^{*}\in T(z+t_{1}v_{1}+t_{2}v_{2})\}$,
$t_{1},t_{2}\in\mathbb{R}$, and $\mathscr{T}_{P,z}:\mathbb{R}^{2}\rightrightarrows\mathbb{R}^{2}$,
$(s_{1},s_{2})\in\mathscr{T}_{P,z}(t_{1},t_{2})$ if there is $x^{*}\in T(z+t_{1}v_{1}+t_{2}v_{2})$
such that $\langle v_{1},x^{*}\rangle=s_{1}$, $\langle v_{2},x^{*}\rangle=s_{2}$.

\begin{proposition}\label{cl-val-graph} Let $(X,\|\cdot\|)$ be
a Banach space, let $T\in\mathcal{M}(X)$ be such that $\operatorname*{ri}D(T)\neq\emptyset$
and let $A\subset\operatorname*{aff}D(T)$ be finite-dimensional affine
with $A\cap\operatorname*{ri}D(T)\neq\emptyset$. 

\emph{(i)} If $T$ has $w^{*}-$closed values then ${\mathscr{T}}_{A,z}$
has closed values, for every $z\in A$.

\medskip

\emph{(ii)} If $T$ is demiclosed then ${\mathscr{T}}_{A,z}$ is closed,
for every $z\in A$. \end{proposition}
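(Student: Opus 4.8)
The idea is to reduce both assertions to the finite–dimensional picture through the restriction machinery developed in Section~3, using $B:=\operatorname*{aff}D(T)$ as an intermediate stage. First I would fix $z\in A$ and observe that $A\subset B:=\operatorname*{aff}D(T)$, so that $\mathscr{T}_{A,z}$ factors as a restriction of $T_{B,z}$: writing $G$ for the linear subspace parallel to $B$ and $F\subset G$ for the one parallel to $A$, one has $T_{A,z}=(T_{B,z})_{F}$ and hence $\mathscr{T}_{A,z}=\mathscr{I}^{*}(T_{B,z})_{F}\mathscr{I}$ in the notation preceding Lemma~\ref{red-fin dim}. The point of going through $B$ is that $z\in A\cap\operatorname*{ri}D(T)$ can be assumed (translating $A$ inside itself if necessary — note $A$ is finite-dimensional and meets $\operatorname*{ri}D(T)$, an open subset of $B$), and then $z$ is an algebraic interior point of $D(T_{B,z})=D(T)-z$ relative to $G$, which is exactly the kind of hypothesis under which maximal monotone–type operators behave well.

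For part~(i): assume $T$ has $w^{*}$–closed values. I would first pass to $T_{B,z}$. Since $D(T)\subset B$ we have $T=T+N_{B}$ trivially only if $B$ is closed; in general $B=\operatorname*{aff}D(T)$ need not be closed in a Banach space, so I cannot invoke Proposition~\ref{val-inchise} directly. Instead I would argue pointwise and finite-dimensionally. Fix $(t)\in D(\mathscr{T}_{A,z})$ and let $x:=z+\mathscr{I}(t)\in D(T)$. A sequence $s^{(n)}\in\mathscr{T}_{A,z}(t)$ with $s^{(n)}\to s$ corresponds, for each $n$, to some $x_{n}^{*}\in Tx$ with $\langle v_{i},x_{n}^{*}\rangle=s_{i}^{(n)}$. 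The obstacle here is that $Tx$ need not be weak-star compact, so $(x_{n}^{*})_{n}$ may be unbounded and one cannot extract a weak-star convergent subnet landing in $Tx$. This is where $\operatorname*{ri}D(T)\neq\emptyset$ and $A\cap\operatorname*{ri}D(T)\neq\emptyset$ enter: because $x\in D(T)$ and $A$ meets the (relative) interior of $D(T)$, one can choose $d$ points $z+\mathscr{I}(t)\pm\varepsilon v_{i}$ (or a simplex around $x$ inside $A\cap D(T)$) and use monotonicity of $T$ between $x_{n}^{*}$ and fixed selections at those neighboring points to bound $\langle \pm v_{i},x_{n}^{*}\rangle$, hence to bound all the coordinates $\langle v_{i},x_{n}^{*}\rangle$, hence $s^{(n)}$ stays in a bounded region and, more to the point, any accumulation point $s$ is realized: a standard finite-dimensional compactness plus the local boundedness of a monotone operator at an interior point of its domain gives that $\{x^{*}\in Tx: \langle v_{i},x^{*}\rangle = s_{i}\}$ is a weak-star closed, bounded, hence weak-star compact nonempty slice of the $w^{*}$-closed set $Tx$, so $s\in\mathscr{T}_{A,z}(t)$. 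In short: weak-star closedness of $Tx$ plus local boundedness (from $\operatorname*{ri}D(T)\neq\emptyset$) upgrades to closedness of the finite-dimensional image $\mathscr{T}_{A,z}(t)$.

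For part~(ii): assume $T$ is demiclosed. By Proposition~\ref{demi} (applied with $A$ replaced by $B=\operatorname*{aff}D(T)$, after noting the translation reduction and that demiclosedness passes to $T_{B,z}$ without needing $B$ closed — that direction of the proposition is unconditional), $T_{B,z}$ is demiclosed in $G\times G^{*}$; restricting once more to $F$ via Proposition~\ref{demi} again, $T_{A,z}=(T_{B,z})_{F}$ is demiclosed in $F\times F^{*}$. Now $F$ is finite-dimensional, so on $F\times F^{*}$ the strong and weak topologies coincide and every sequence in a bounded set has a convergent subsequence; thus ``demiclosed'' on $F\times F^{*}$ is just ordinary graph closedness of $T_{A,z}$, provided we know the relevant sequences stay bounded. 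To see closedness of $\mathscr{T}_{A,z}$: take $(t^{(n)},s^{(n)})\in\operatorname*{Graph}\mathscr{T}_{A,z}$ with $t^{(n)}\to t$, $s^{(n)}\to s$; pull back to $x_{n}:=z+\mathscr{I}(t^{(n)})\to x:=z+\mathscr{I}(t)$ and $x_{n}^{*}\in Tx_{n}$ with prescribed pairings against the $v_i$. The obstacle is again boundedness of $(x_{n}^{*})$ — but now I would use that $x_{n}\to x$ with $x\in\overline{D(T)}$ and, crucially, that we may choose the whole picture inside $A\cap\operatorname*{ri}D(T)$ near a fixed interior point: local boundedness of the monotone operator $T$ on a neighborhood of an interior point of $D(T)$ (available since $\operatorname*{ri}D(T)\neq\emptyset$ and, after the translation, $z\in\operatorname*{ri}D(T)$) bounds $(x_{n}^{*})$ in $X^{*}$; extract a weak-star convergent subnet $x_{n}^{*}\to x^{*}$, apply demiclosedness of $T$ to get $x^{*}\in Tx$, and read off $\langle v_{i},x^{*}\rangle=\lim s_{i}^{(n)}=s_{i}$, i.e.\ $(t,s)\in\operatorname*{Graph}\mathscr{T}_{A,z}$.

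The main obstacle, common to both parts, is controlling the unbounded directions of $T$: a priori the adjoint selections $x_{n}^{*}$ realizing prescribed finite-dimensional pairings need not be bounded in $X^{*}$, and weak-star compactness of balls alone does not help. The hypothesis $\operatorname*{ri}D(T)\neq\emptyset$ together with $A\cap\operatorname*{ri}D(T)\neq\emptyset$ is precisely what supplies local boundedness of the monotone operator near the points of $A$ we care about, and that is the lever that makes the finite-dimensional closedness statements go through; the remaining steps are the routine restriction bookkeeping from Section~3 and elementary finite-dimensional compactness.
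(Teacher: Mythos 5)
Your overall architecture (reduce to $\operatorname*{aff}D(T)$, exploit a point of $A\cap\operatorname*{ri}D(T)$, then finish by weak-star compactness together with $w^{*}$-closedness of the values, resp.\ demiclosedness) matches the paper's, but the one step that carries all the weight --- bounding $\|x_{n}^{*}\|$ in $X^{*}$ --- is not correctly established in either part. In (i) you propose to bound $\langle\pm v_{i},x_{n}^{*}\rangle$ by monotonicity against selections at $x\pm\varepsilon v_{i}$; those points need not belong to $D(T)$ when $x=z+\mathscr{I}(t)$ is a relative boundary point of $D(T)$, and in any case those pairings are just $\pm s_{i}^{(n)}$, already bounded because $s^{(n)}$ converges, so this controls nothing new: what is needed is a bound on the full norm $\|x_{n}^{*}\|$ so that a $w^{*}$-convergent subnet with limit in the $w^{*}$-closed set $Tx$ exists (asserting that the slice $\{x^{*}\in Tx\mid\langle v_{i},x^{*}\rangle=s_{i}\}$ is ``nonempty'' is the conclusion, not an input). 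In (ii) you assert that ``we may choose the whole picture inside $A\cap\operatorname*{ri}D(T)$ near a fixed interior point'' and that local boundedness there bounds $(x_{n}^{*})$; but the sequence $(t^{(n)},s^{(n)})$ is given, $x_{n}=z+\mathscr{I}(t^{(n)})$ may converge to a relative boundary point of $D(T)$, and local boundedness at an interior point $z_{0}$ says nothing directly about $Tx_{n}$ for $x_{n}$ away from $z_{0}$. That is precisely the case the proposition must cover.

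The missing ingredient is the quantitative monotonicity estimate against a ball around a point $z_{0}\in A\cap\operatorname*{ri}D(T)$ (after the reduction to $\operatorname*{aff}D(T)=X$): choose $M,r>0$ with $z_{0}+rB_{X}\subset D(T)$ and $\|y^{*}\|\le M$ for all $y^{*}\in T(z_{0}+ru)$, $u\in B_{X}$; monotonicity gives $\langle x_{n}-z_{0}-ru,x_{n}^{*}-y^{*}\rangle\ge0$, and since $x_{n}-z_{0}$ lies in $\operatorname*{span}\{v_{1},..,v_{d}\}$ the term $\langle x_{n}-z_{0},x_{n}^{*}\rangle=\sum_{i}(t_{i}^{n}-t_{i}^{0})s_{i}^{n}$ is expressible in the finite-dimensional data and bounded; taking the supremum over $u\in B_{X}$ yields $r\|x_{n}^{*}\|\le M(\sum_{i}|t_{i}^{n}-t_{i}^{0}|\,\|v_{i}\|+r)+\sum_{i}(t_{i}^{n}-t_{i}^{0})s_{i}^{n}$. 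This is what bounds $(x_{n}^{*})$ uniformly no matter where $x$ sits in $\overline{D(T)}$; once it is in place, your compactness/demiclosedness endgame goes through. Your two-stage restriction through $B=\operatorname*{aff}D(T)$ and then $F$ is also heavier than necessary: the estimate above works directly, and the reduction to the affine hull is needed only so that $\operatorname*{ri}D(T)$ becomes a genuine interior and local boundedness of the monotone operator applies at $z_{0}$.
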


\begin{proof} We may assume without loss of generality that $\operatorname*{aff}D(T)=X$,
otherwise we replace $T$ by $T_{\operatorname*{aff}D(T),z}$ and
acknowledge Corollary \ref{swr} . This change does not affect ${\mathscr{T}}_{A,z}$.

Let $A=A(z;v_{1},..,v_{d})$ and $z_{0}:=z+t_{1}^{0}v_{1}+..+t_{d}^{0}v_{d}\in A\cap\operatorname*{int}D(T)$.
From the local boundedness of $T$ at $z_{0}$ there are $M,r>0$
such that $z_{0}+rB_{X}\subset D(T)$ and $\|x^{*}\|\le M$, for every
$x^{*}\in T(z_{0}+ru)$, $u\in B_{X}$. 

(ii) Consider $\{(s_{1}^{n},..,s_{d}^{n}),(t_{1}^{n},..,t_{d}^{n})\}_{n\ge1}\subset\mathscr{T}_{A,z}$,
that is, there exists $x_{n}^{*}\in T(z+t_{1}^{n}v_{1}+..+t_{d}^{n}v_{d})$
such that $\langle v_{i},x_{n}^{*}\rangle=s_{i}^{n}$, $i=\overline{1,d}$,
$n\ge1$. Assume that $\lim_{n\rightarrow\infty}((s_{1}^{n},..,s_{d}^{n}),(t_{1}^{n},..,t_{d}^{n}))=((s_{1},..,s_{d}),(t_{1},..,t_{d}))$.

The monotonicity of $T$ provides\[
\langle(t_{1}^{n}-t_{1}^{0})v_{1}+..+(t_{d}^{n}-t_{d}^{0})v_{d}-ru,x_{n}^{*}-x^{*}\rangle\ge0,\ \forall x^{*}\in T(z_{0}+ru),u\in B_{X},n\ge1,\]
from which $r\|x_{n}^{*}\|\le M(|t_{1}^{n}-t_{1}^{0}|\|v_{1}\|+..+|t_{d}^{n}-t_{d}^{0}|\|v_{d}\|+r)+(t_{1}^{n}-t_{1}^{0})s_{1}^{n}+..+(t_{d}^{n}-t_{d}^{0})s_{d}^{n}$,
$n\ge1$. Hence $\{x_{n}^{*}\}_{n\ge1}$ is bounded. On a subnet denoted
for simplicity by the same index, $x_{n}^{*}\rightarrow x^{*}\in T(z+t_{1}v_{1}+..+t_{d}v_{d})$
weakly-star in $X^{*}$, since $T$ is demiclosed. Let $n\rightarrow\infty$
in $\langle v_{i},x_{n}^{*}\rangle=s_{i}^{n}$, $i=\overline{1,d}$,
to get $\langle v_{i},x^{*}\rangle=s_{i}$, $i=\overline{1,d}$, that
is $(s_{1},..,s_{d})\in\mathscr{T}_{A,z}(t_{1},..,t_{d})$. 

The argument of (i) proceeds similarly with $(t_{1}^{n},..,t_{d}^{n})=(t_{1},..,t_{d})\in D({\mathscr{T}}_{A,z})$,
$n\ge1$. \end{proof}

\strut

Since every convex set in a finite dimensional space has a non-empty
relative (algebraic) interior which is dense in the set, the following
definition is a natural extension to a general topological vector
space context for the nearly-convex notion. A set $S\subset X$ is
called \emph{nearly-convex} if there is a convex set $C$ such that
$\operatorname*{ri}C\neq\emptyset$ and $C\subset S\subset\overline{C}$.
Equivalently, $S$ is nearly-convex iff $\operatorname*{ri}S$ is
non-empty convex and $S\subset\operatorname*{cl}(\operatorname*{ri}S)$.
Indeed, directly, we know that $\operatorname*{ri}C=\operatorname*{ri}\overline{C}$
and $\operatorname*{cl}(\operatorname*{ri}C)=\overline{C}$ (see \cite[Lemma 11A b), p. 59]{MR0410335})
from which $\operatorname*{ri}S=\operatorname*{ri}C$ is non-empty
convex and $S\subset\overline{C}=\operatorname*{cl}(\operatorname*{ri}S)$.
Conversely, $C=\operatorname*{ri}S$ fulfills all the required conditions.

\begin{lemma} \label{int-cl-fin-dim}Let $X$ be a Banach space,
let $T:X\rightrightarrows X^{*}$ be such that $D(T)$ is nearly-convex,
and let $A=A(z;v_{1},v_{2},..,v_{d})\subset\operatorname*{aff}D(T)$
be such that $A\cap\operatorname*{ri}D(T)\neq\emptyset$. Then $D(\mathscr{T}_{A,z})$
is nearly-convex and \begin{equation}
\operatorname*{int}D(\mathscr{T}_{A,z})=J^{-1}(\operatorname*{ri}D(T)-z),\ \operatorname*{cl}D(\mathscr{T}_{A,z})=J^{-1}(\overline{D(T)}-z),\label{icfd}\end{equation}
where $J(t_{1},..,t_{d})=t_{1}v_{1}+..+t_{d}v_{d}$, $(t_{1},..,t_{d})\in\mathbb{R}^{d}$.
\end{lemma}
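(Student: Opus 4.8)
The plan is to reduce everything to a purely finite-dimensional statement about the image of a near-convex set under the affine isomorphism $J$, and then invoke the standard facts about relative interiors and closures of convex sets in $\mathbb{R}^d$. First I would recall that, by hypothesis, $D(T)$ is nearly-convex, so $C:=\operatorname*{ri}D(T)$ is non-empty convex with $C\subset D(T)\subset\overline C$, and moreover $\operatorname*{ri}C=C$ and $\operatorname*{cl}C=\overline{D(T)}$. Since $A\subset\operatorname*{aff}D(T)$ and $A\cap\operatorname*{ri}D(T)\neq\emptyset$, the map $J:\mathbb{R}^d\to F$ followed by the translation $x\mapsto z+x$ is an affine homeomorphism of $\mathbb{R}^d$ onto $A$, and $D(\mathscr{T}_{A,z})=J^{-1}(D(T)-z)=J^{-1}((D(T)\cap A)-z)$, the last equality holding because $J(\mathbb{R}^d)+z=A$.

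Next I would set $K:=J^{-1}(C-z)=J^{-1}((C\cap A)-z)$. This is a convex subset of $\mathbb{R}^d$, and it is non-empty precisely because $A\cap\operatorname*{ri}D(T)=A\cap C\neq\emptyset$; hence $\operatorname*{int}K\neq\emptyset$ would follow once we know $C\cap A$ is relatively open in $A$. That is the one genuinely non-formal point: one must show that $\operatorname*{ri}_X D(T)\cap A$, transported to $\mathbb{R}^d$, actually has non-empty interior, i.e. that intersecting the relative interior with a finite-dimensional affine slice through it produces a set with non-empty relative interior inside that slice. I expect this to be the main obstacle, and I would handle it via the classical line/segment characterization of the relative interior of a convex set (together with the fact that $A$ is finite-dimensional, so $\operatorname*{aff}(C\cap A)$ is a closed, and indeed finite-dimensional, affine subspace of $A$): if $a\in C\cap A$, then for each $x\in C\cap A$ the segment $[a,x]$ can be prolonged slightly past $a$ inside $C$, and since it also stays in the affine set $A$ it stays in $C\cap A$; this gives $a\in\operatorname*{ri}(C\cap A)$ relative to $\operatorname*{aff}(C\cap A)$, and a dimension-counting argument (using $A\cap\operatorname*{ri}D(T)\neq\emptyset$ so that locally $C\cap A$ spans all of $A$ around $a$) shows $\operatorname*{aff}(C\cap A)=A$. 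Consequently $K=J^{-1}((C\cap A)-z)$ is a convex set in $\mathbb{R}^d$ with $\operatorname*{int}K\neq\emptyset$, so $K$ is nearly-convex with $\operatorname*{ri}K=\operatorname*{int}K=K$ and $\operatorname*{cl}K=\overline K$.

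Finally I would assemble the two displayed identities. Since $J$ (composed with translation by $z$) is a homeomorphism of $\mathbb{R}^d$ onto $A$ and $A$ is closed in $X$, it maps closures in $\mathbb{R}^d$ to closures in $A$, and interiors-relative-to-$\mathbb{R}^d$ to interiors-relative-to-$A$. From $K\subset D(\mathscr{T}_{A,z})\subset\operatorname*{cl}K$ (which is just the transport of $C\cap A\subset D(T)\cap A\subset\overline C\cap A=\operatorname*{cl}(C\cap A)$, the last using $A$ closed) it follows that $D(\mathscr{T}_{A,z})$ is nearly-convex with $\operatorname*{ri}D(\mathscr{T}_{A,z})=\operatorname*{int}D(\mathscr{T}_{A,z})=K$ and $\operatorname*{cl}D(\mathscr{T}_{A,z})=\operatorname*{cl}K$. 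Unwinding $K=J^{-1}(C-z)=J^{-1}(\operatorname*{ri}D(T)-z)$ and $\operatorname*{cl}K=J^{-1}(\operatorname*{cl}(C-z))=J^{-1}(\overline{D(T)}-z)$ (the image of the closure equals the closure of the image under the homeomorphism, combined with $\operatorname*{cl}C=\overline{D(T)}$ and $\overline{D(T)}\cap A$ being the closure of $D(T)\cap A$ when $A$ is closed and meets $\operatorname*{ri}D(T)$) yields exactly~\eqref{icfd}.
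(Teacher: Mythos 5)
Your overall strategy coincides with the paper's: sandwich $D(\mathscr{T}_{A,z})=J^{-1}(D(T)-z)$ between the non-empty open convex set $J^{-1}(\operatorname*{ri}D(T)-z)$ and its closure, then invoke elementary facts about convex sets with non-empty interior in $\mathbb{R}^{d}$. However, there is a gap at the one step that carries the real content of the lemma. You assert $\overline{C}\cap A=\operatorname*{cl}(C\cap A)$ ``the last using $A$ closed''; closedness of $A$ gives only $\operatorname*{cl}(C\cap A)\subset\overline{C}\cap A$, and the reverse inclusion is false in general (take $C$ the open unit disc in $\mathbb{R}^{2}$ and $A$ a tangent line: $\overline{C}\cap A$ is a point while $C\cap A=\emptyset$). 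That reverse inclusion --- equivalently $J^{-1}(\overline{D(T)}-z)\subset\operatorname*{cl}J^{-1}(\operatorname*{ri}D(T)-z)$ --- is precisely what yields $\operatorname*{cl}D(\mathscr{T}_{A,z})=J^{-1}(\overline{D(T)}-z)$ and, via $\operatorname*{int}(\operatorname*{cl}K)=K$ for $K$ open convex in $\mathbb{R}^{d}$, the interior formula as well; it genuinely uses the hypothesis $A\cap\operatorname*{ri}D(T)\neq\emptyset$. The paper proves it by fixing $\bar{t}$ with $J\bar{t}\in\operatorname*{ri}D(T)-z$ and noting that for any $t\in J^{-1}(\overline{D(T)}-z)$ the points $(1-\lambda)\bar{t}+\lambda t$, $0\le\lambda<1$, lie in $J^{-1}(\operatorname*{ri}D(T)-z)$ (the half-open segment from a relative-interior point to a closure point of a convex set stays in the relative interior, and it stays in $A$ since both endpoints are in $A$); letting $\lambda\uparrow1$ finishes. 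You do acknowledge parenthetically that the identity needs ``$A$ closed and meets $\operatorname*{ri}D(T)$'', but you never supply this argument, and the justification you do give is the wrong one.

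Conversely, the step you flag as ``the main obstacle'' --- that $A\cap\operatorname*{ri}D(T)$ has non-empty interior relative to $A$ --- is immediate: $\operatorname*{ri}D(T)$ is by definition open in $\operatorname*{cl}(\operatorname*{aff}D(T))\supset A$, so its trace on $A$ is a non-empty open subset of $A$, and no segment-prolongation or dimension-counting argument is required. So the proposal has the correct skeleton but spends its effort on the trivial inclusion while leaving the essential one unproved; inserting the one-line segment argument above closes the gap and recovers the paper's proof.
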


\begin{proof} We may assume without loss of generality that $\operatorname*{aff}D(T)=X$
(otherwise we replace $X$ by $F:=\operatorname*{aff}D(T)\ni0$ and
$T$ by $T_{F}$; $\mathscr{T}_{A,z}$ being impervious to this change).
Let $D:=\operatorname*{int}D(T)-z$ and $S:=D(T)-z$. Then $J^{-1}(D)$
is non-empty open convex, $J^{-1}(S)=D(\mathscr{T}_{A,z})$, $\overline{D}=\overline{S}$,
and $\operatorname*{int}\overline{S}=D$ (see e.g. \cite[Lemma 11A b), p. 59]{MR0410335}).

Fix $\bar{t}$ with $J\bar{t}\in D$. Since $J^{-1}(D)\subset D(\mathscr{T}_{A,z})\subset J^{-1}(\overline{D})$
and $J^{-1}(D)\subset\operatorname*{int}J^{-1}(\overline{D})$, $\operatorname*{cl}J^{-1}(D)\subset J^{-1}(\overline{D})$
(due to the continuity of $J$) in order to conclude it suffices to
show that $\operatorname*{int}J^{-1}(\overline{D})\subset J^{-1}(D)$
and $J^{-1}(\overline{D})\subset\operatorname*{cl}J^{-1}(D)$. 

Let $t\in J^{-1}(\overline{D})$. Since $\operatorname*{int}\overline{D}=D$,
$(1-\lambda)\bar{t}+\lambda t\in J^{-1}(D)$, for $0\le\lambda<1$.
Let $\lambda\uparrow1$ to get $t\in\operatorname*{cl}J^{-1}(D)$.
Hence $J^{-1}(\overline{D})=\operatorname*{cl}J^{-1}(D)$ followed
by $\operatorname*{int}J^{-1}(\overline{D})=\operatorname*{int}(\operatorname*{cl}J^{-1}(D))=J^{-1}(D)$.
\end{proof}

\begin{lemma}\label{her-cone} Let $X$ be a Banach space and let
$T:X\rightrightarrows X^{*}$ be such that $D(T)$ is nearly-convex
and $T=T+N_{D(T)}$. Then $\mathscr{T}_{A,z}=\mathscr{T}_{A,z}+N_{D(\mathscr{T}_{A,z})}$,
for every finite-dimensional affine set $A\subset X$ with $A\cap\operatorname*{ri}D(T)\neq\emptyset$,
$z\in A\cap\operatorname*{ri}D(T)$. \end{lemma}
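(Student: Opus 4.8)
The key identity to establish is $\mathscr{T}_{A,z} = \mathscr{T}_{A,z} + N_{D(\mathscr{T}_{A,z})}$; equivalently, writing $\mathbf{t}_0 := J^{-1}(z - z) = 0$ for the base point (since we pick $z$ itself as the reference point, so that $\mathscr{T}_{A,z}$ is defined with $z$ as origin), we must show that adding the normal cone of $D(\mathscr{T}_{A,z})$ at any point of its domain produces no new range values. Since the inclusion $\mathscr{T}_{A,z} \subset \mathscr{T}_{A,z} + N_{D(\mathscr{T}_{A,z})}$ is automatic ($0 \in N_C(x)$ whenever $x \in C$), the content is the reverse inclusion. First I would reduce to the case $\operatorname*{aff} D(T) = X$ by replacing $X$ with $F_0 := \operatorname*{aff} D(T) - z$ and $T$ by the corresponding restriction, noting as in the proof of Lemma~\ref{int-cl-fin-dim} that $\mathscr{T}_{A,z}$ is unaffected by this change, and that the hypotheses $D(T)$ nearly-convex and $T = T + N_{D(T)}$ pass to the restriction (the latter because, after this reduction, $N_{D(T)}$ computed in $F_0$ agrees with the relevant part of the original normal cone).

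Next I would unpack the normal cones explicitly. Fix $\mathbf{t} \in D(\mathscr{T}_{A,z})$ and set $x := z + J\mathbf{t} \in D(T)$, where $J(t_1,\dots,t_d) = t_1 v_1 + \dots + t_d v_d$. By Lemma~\ref{int-cl-fin-dim}, $D(\mathscr{T}_{A,z}) = J^{-1}(D(T) - z)$ is nearly-convex with $\operatorname*{int} D(\mathscr{T}_{A,z}) = J^{-1}(\operatorname*{ri} D(T) - z)$ and $\operatorname*{cl} D(\mathscr{T}_{A,z}) = J^{-1}(\overline{D(T)} - z)$; in particular $\overline{D(\mathscr{T}_{A,z})}$ is convex, so $N_{D(\mathscr{T}_{A,z})}(\mathbf{t}) = N_{\overline{D(\mathscr{T}_{A,z})}}(\mathbf{t})$, and a vector $\mathbf{s} = (s_1,\dots,s_d) \in \mathbb{R}^d$ lies in $N_{D(\mathscr{T}_{A,z})}(\mathbf{t})$ iff $\langle \mathbf{s}, \mathbf{t}' - \mathbf{t}\rangle \le 0$ for all $\mathbf{t}' \in D(\mathscr{T}_{A,z})$, i.e. (using $J^{-1}(D(T)-z) = D(\mathscr{T}_{A,z})$ and the adjoint relation $\langle \mathbf{s}, J^{-1}w\rangle = \langle J^{-*}\mathbf{s}, w\rangle$ appropriately interpreted) iff the functional $x^*$ on $F_0$ determined by $\langle v_i, x^*\rangle = s_i$ satisfies $\langle w - x, x^*\rangle \le 0$ for all $w \in D(T)$; that is, iff $x^* \in N_{D(T)}(x)|_{F_0}$ up to identifying $F_0^*$ via the $v_i$. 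The precise bookkeeping here — relating $N_{D(\mathscr{T}_{A,z})}$ through the isomorphism $\mathscr{I} \colon \mathbb{R}^d \to F = \operatorname*{span}\{v_1,\dots,v_d\}$ and the identity $\mathscr{T}_{A,z} = \mathscr{I}^* T_{A,z} \mathscr{I}$ to $N_{D(T)}$ — is the step I expect to require the most care, though it is ultimately a routine dual-pairing computation.

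Granting that translation of normal cones, the conclusion follows quickly. Take $\mathbf{s} \in \mathscr{T}_{A,z}(\mathbf{t})$ and $\mathbf{n} \in N_{D(\mathscr{T}_{A,z})}(\mathbf{t})$; I must show $\mathbf{s} + \mathbf{n} \in \mathscr{T}_{A,z}(\mathbf{t})$. By definition of $\mathscr{T}_{A,z}$ there is $x_1^* \in T(x)$ with $\langle v_i, x_1^*\rangle = s_i$ for all $i$, and by the normal-cone translation there is $y^* \in N_{D(T)}(x)$ with $\langle v_i, y^*\rangle = n_i$ for all $i$. Then $x_1^* + y^* \in T(x) + N_{D(T)}(x) = T(x)$ by the hypothesis $T = T + N_{D(T)}$, and $\langle v_i, x_1^* + y^*\rangle = s_i + n_i$, so $\mathbf{s} + \mathbf{n} \in \mathscr{T}_{A,z}(\mathbf{t})$, as required. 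One subtlety to watch: $N_{D(T)}(x)$ is the normal cone taken in the full space $X$ (or in $\operatorname*{aff} D(T)$ after reduction), and one needs that restricting $X$-functionals in $N_{D(T)}(x)$ to $F$ realizes \emph{all} of $N_{D(\mathscr{T}_{A,z})}(\mathbf{t})$, which is exactly the surjectivity direction of the translation in the previous paragraph and uses the Hahn–Banach extension of functionals from $F \subset X$ together with $F^\perp \subset N_{D(T)}(x)$ being absorbed on the left by $T = T + N_{D(T)} \supset T + N_{\operatorname*{aff}D(T)}$. I would finish by remarking that the argument is symmetric in the choice of $z \in A \cap \operatorname*{ri} D(T)$, so the statement holds for every such $z$.
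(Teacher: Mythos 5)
Your overall skeleton is the right one and coincides with the paper's: the inclusion $\mathscr{T}_{A,z}\subset\mathscr{T}_{A,z}+N_{D(\mathscr{T}_{A,z})}$ is free, and everything reduces to showing that every $\mathbf{n}\in N_{D(\mathscr{T}_{A,z})}(\widehat{t}\,)$ has the form $J^{*}y^{*}$ for some $y^{*}\in N_{D(T)}(x)$, $x=z+J\widehat{t}$, after which $T=T+N_{D(T)}$ finishes the argument exactly as you describe. The gap is in how you justify that surjectivity. You call it ``ultimately a routine dual-pairing computation'' and attribute it to Hahn--Banach extension from $F=\operatorname*{span}\{v_{1},\dots,v_{d}\}$ together with the inclusion $F^{\perp}\subset N_{D(T)}(x)$. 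That inclusion is false in general: take $D(T)=[0,1]^{2}\subset\mathbb{R}^{2}$, $A$ the horizontal line through $(1/2,1/2)$, and $x=(1,1/2)$; then $F^{\perp}=\mathbb{R}(0,1)$ while $N_{D(T)}(x)=\mathbb{R}_{+}(1,0)$. Moreover, Hahn--Banach only produces an extension agreeing with the given functional on $F$ and says nothing about its sign on $D(T)\setminus A$; what is needed is that \emph{some} extension be nonpositive on all of $D(T)-x$, not merely on $(D(T)\cap A)-x$, and that is a genuinely different statement.

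That statement is precisely the normal-cone (subdifferential) chain rule $N_{J^{-1}(C)}(\widehat{t}\,)=J^{*}N_{C}(J\widehat{t}\,)$ for $C=\overline{D(T)}-z$, and it holds only under the constraint qualification $0\in{}^{ic}(A-\overline{D(T)})$ --- i.e., exactly the hypothesis $A\cap\operatorname*{ri}D(T)\neq\emptyset$, which your surjectivity argument never invokes. The hypothesis cannot be dispensed with: for $T=N_{B}$ with $B$ the closed unit disk in $\mathbb{R}^{2}$ and $A$ the tangent line at $z=(1,0)$, one gets $D(\mathscr{T}_{A,z})=\{0\}$, hence $N_{D(\mathscr{T}_{A,z})}(0)=\mathbb{R}$, while $\mathscr{T}_{A,z}(0)=\{0\}$, so the conclusion of the lemma fails. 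The paper's proof is your computation with this one step supplied correctly: it identifies $N_{D(\mathscr{T}_{A,z})}\widehat{t}=N_{J^{-1}(\overline{D(T)}-z)}\widehat{t}$ via (\ref{icfd}) and then applies the chain rule of \cite[Theorem 2.8.3(vii), p. 123]{MR1921556} to $\iota_{\overline{D(T)}-z}\circ J$, checking $0\in{}^{ic}(A-\overline{D(T)})$ from $z\in A\cap\operatorname*{ri}D(T)$. Replacing your Hahn--Banach remark by that appeal repairs the proof.
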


\begin{proof} Condition $T=T+N_{D(T)}$ provides $T_{z}=T_{z}+(N_{D(T)})_{z}=T_{z}+N_{D(T)-z}$,
$z\in X$. Recall that $\mathscr{T}_{A,z}=J^{*}T_{z}J$, where $A=A(z;v_{1},..,v_{d})$,
$J:\mathbb{R}^{d}\rightarrow X$, $J\widehat{t}=t_{1}v_{1}+..+t_{d}v_{d}$,
$\widehat{t}=(t_{1},..,t_{d})$. Hence \[
\mathscr{T}_{A,z}\widehat{t}=\mathscr{T}_{A,z}\widehat{t}+J^{*}N_{D(T)-z}J\widehat{t},\ \forall\widehat{t}\in D(\mathscr{T}_{A,z})=J^{-1}(D(T)-z).\]

But, for every $\widehat{t}\in D(\mathscr{T}_{A,z})$ we have from
(\ref{icfd}) and by the chain rule (see e.g. \cite[Theorem 2.8.3(vii), p. 123]{MR1921556}),
that \[
N_{D({\mathscr{T}}_{A,z})}\widehat{t}=N_{\operatorname*{cl}D({\mathscr{T}}_{A,z})}\widehat{t}=N_{J^{-1}(\overline{D(T)}-z)}\widehat{t}=\partial(\iota_{\overline{D(T)}-z}\circ J)(\widehat{t})=J^{*}N_{\overline{D(T)}-z}J\widehat{t},\]
since $0\in{}^{ic}(A-\overline{D(T)})$. The proof is complete. \end{proof}

\section{Finite-dimensional context characterizations}

Recall the next sum rule for maximal monotone operators followed by
two of its consequences.

\begin{theorem}\emph{(\cite[Corollary\ 4]{MR2577332})\label{mmcs}}
Let $X$ be a Banach space and $M,N\in\mathscr{M}(X)$. Assume that
$^{ic}D(M),{}^{ic}D(N)$ are nonempty and $0\in{}^{ic}(D(M)-D(N))$.
Then $M+N\in\mathscr{M}(X)$.

\end{theorem}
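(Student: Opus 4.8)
The plan is to use the characterization recalled above that an operator is maximal monotone exactly when it is NI and representable. Since $M+N$ is obviously monotone and $0\in{}^{ic}(D(M)-D(N))$ forces $D(M)\cap D(N)\neq\emptyset$, it suffices to exhibit a representative of $M+N$ and to check that $M+N$ is NI; the entire difficulty is to exploit the constraint qualification.

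Because $M,N\in\mathscr{M}(X)$, I would put $f:=\varphi_{M}\in\mathcal{D}_{M}$ and $g:=\varphi_{N}\in\mathcal{D}_{N}$ and form the partial infimal convolution in the dual variable, with the primal variable kept common,
\[
h_{0}(x,x^{*}):=\inf\{f(x,u^{*})+g(x,v^{*})\mid u^{*}+v^{*}=x^{*}\},\qquad(x,x^{*})\in Z,
\]
and set $h:=\operatorname*{cl}_{s\times w^{*}}\operatorname*{conv}h_{0}$. Adding $f\geq c$ and $g\geq c$ gives $h_{0}\geq c$ on $Z$, and as $c$ is $s\times w^{*}$-continuous affine this bound passes to $h$, so $h\geq c$. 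If $u^{*}\in Mx$ and $v^{*}\in Nx$ then $f(x,u^{*})=c(x,u^{*})$ and $g(x,v^{*})=c(x,v^{*})$ (since $f,g$ represent $M,N$), whence $h_{0}(x,u^{*}+v^{*})\leq c(x,u^{*}+v^{*})$; together with $h_{0}\geq c$ this yields $\operatorname*{Graph}(M+N)\subset[h_{0}=c]\subset[h=c]$, and in particular $h$ is proper, so $h\in\mathcal{R}$.

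The core of the proof is to show, using $0\in{}^{ic}(D(M)-D(N))$, that $h\in\mathcal{D}$ and $[h=c]=\operatorname*{Graph}(M+N)$, which I would establish via a Fenchel-type duality (decoupling) argument valid under relative-algebraic-interior conditions, of Attouch--Br\'ezis type, carried out in the locally convex space $(Z,s\times w^{*})$ (see e.g.\ the calculus rules of \cite[Section~2.8]{MR1921556}). One first transfers the hypothesis to the qualification $0\in{}^{ic}\big(\operatorname*{Pr}_{X}(\operatorname*{dom}f)-\operatorname*{Pr}_{X}(\operatorname*{dom}g)\big)$, using that $\operatorname*{Pr}_{X}(\operatorname*{dom}\varphi_{M})$ lies between $D(M)$ and $\overline{\operatorname*{conv}D(M)}$ and that $\operatorname*{ri}D(M)={}^{ic}D(M)$ for $M\in\mathscr{M}(X)$ (both recalled in the Preliminaries), and similarly for $N$. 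Under this qualification: the infimum defining $h_{0}$ is exact and $h_{0}$ is already $s\times w^{*}$-lower semicontinuous and convex, so $h=h_{0}$; the conjugate $h^{\square}$ computes as the corresponding combination of $f^{\square}=\psi_{M}$ and $g^{\square}=\psi_{N}$, whence $h^{\square}\geq c$ because $\psi_{M},\psi_{N}\geq c$ ($f,g\in\mathcal{D}$); and for every $(x_{0},x_{0}^{*})\in[h=c]$ a decomposition $x_{0}^{*}=u^{*}+v^{*}$ attains the (now exact) infimum, so $f(x_{0},u^{*})+g(x_{0},v^{*})=c(x_{0},x_{0}^{*})$, which forces $u^{*}\in Mx_{0}$, $v^{*}\in Nx_{0}$ (both summands being $\geq c$), i.e.\ $x_{0}^{*}\in Mx_{0}+Nx_{0}$. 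Thus $h\in\mathcal{D}_{M+N}$, so $M+N$ is dual-representable; being dual-representable, $M+N$ is NI (this belongs to the representability theory in Banach spaces, \cite{MR2207807,MR2453098}) and, as just shown, representable, so by the characterization recalled above $M+N\in\mathscr{M}(X)$.

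The step I expect to be the main obstacle is the convex-analytic one: since $(Z,s\times w^{*})$ is in general neither metrizable nor barrelled, the exactness and the lower semicontinuity of the partial infimal convolution, and the bound $h^{\square}\geq c$, cannot be read off the classical Attouch--Br\'ezis theorem but require its version under ${}^{ic}$-type conditions, and one has to check with care that $0\in{}^{ic}(D(M)-D(N))$ really transfers to the domains of the Fitzpatrick functions --- this is precisely where the identity $\operatorname*{ri}D(T)={}^{ic}D(T)$ for $T\in\mathscr{M}(X)$ and the sandwiching of $\operatorname*{dom}\varphi_{T}$ between $D(T)$ and $\overline{\operatorname*{conv}D(T)}$ are needed. (Alternatively, and more quickly, the statement follows from the general sum rule of \cite{MR2577332}, being its Corollary~4.)
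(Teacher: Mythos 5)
You should first be aware that the paper does not prove this statement at all: Theorem \ref{mmcs} is imported verbatim as \cite[Corollary 4]{MR2577332}, so there is no internal proof to compare against. Your route --- representing $M+N$ by a partial infimal convolution of $\varphi_{M}$ and $\varphi_{N}$ in the dual variable and decoupling via an Attouch--Br\'ezis-type theorem under ${}^{ic}$-qualifications --- is essentially the strategy of the cited reference (and of \cite{MR2453098}), so the architecture is the right one.

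Two of your steps do not hold up as written. First, $c(x,x^{*})=\langle x,x^{*}\rangle$ is bilinear, not affine; it is neither convex nor $s\times w^{*}$-lower semicontinuous on $Z$ in infinite dimensions, so the inequality $h_{0}\ge c$ does \emph{not} automatically survive either the convexification or the closure in $h=\operatorname*{cl}_{s\times w^{*}}\operatorname*{conv}h_{0}$ (this is exactly why non-NI monotone operators exist even though $c_{T}=c$ on $T$). The bound $h\ge c$ only becomes available after you have proved $h=h_{0}$ under the qualification condition, so that part of the argument must be reordered. Second, and more seriously, the concluding step ``being dual-representable, $M+N$ is NI'' is not a theorem in general Banach spaces: the result you are implicitly invoking (\cite[Theorem 3.1]{MR1974634}) is a reflexive-space statement, and in the non-reflexive setting the passage from dual-representability to maximality is precisely the implication that requires an extra interiority hypothesis --- see Theorem \ref{dr} of this paper, whose implication (iii) $\Rightarrow$ (iv), like \cite[Theorem 3.1]{MR2675662}, assumes ${}^{ic}\operatorname*{Pr}_{X}(\operatorname*{dom}h)\neq\emptyset$. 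The gap is reparable in your situation, since the hypotheses ${}^{ic}D(M),{}^{ic}D(N)\neq\emptyset$ and $0\in{}^{ic}(D(M)-D(N))$ yield ${}^{ic}(D(M)\cap D(N))\neq\emptyset$ and hence ${}^{ic}\operatorname*{Pr}_{X}(\operatorname*{dom}h)\neq\emptyset$ for your $h$, so Theorem \ref{dr} applies; but as stated, the conclusion does not follow from dual-representability alone, and this missing interiority argument is the actual crux of the proof.
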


\begin{proposition} \label{afin-fin dim}Let $X$ be a Banach space.
If $T\in\mathscr{M}(X)$, $A$ is a finite-dimensional affine subset
of $X$, and $A\cap{}^{ic}D(T)\neq\emptyset$ then $T+N_{A}\in\mathscr{M}(X)$.
\end{proposition}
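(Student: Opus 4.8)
The plan is to apply the sum rule Theorem~\ref{mmcs} to the operators $M := T$ and $N := N_A$. We already know $T \in \mathscr{M}(X)$ by hypothesis, and $N_A \in \mathscr{M}(X)$ because $A$ is an \emph{affine} subset: indeed $N_A = \partial \iota_A$ and $\iota_A \in \Gamma(X)$ (affine sets are closed convex), so $N_A$ is maximal monotone as the subdifferential of a proper lsc convex function. Thus the only thing left to verify is the constraint qualification of Theorem~\ref{mmcs}, namely that $^{ic}D(T)$ and $^{ic}D(N_A)$ are nonempty and $0 \in {}^{ic}(D(T) - D(N_A))$.

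First I would record that $D(N_A) = A$, which is a finite-dimensional affine set, hence $\operatorname*{aff}A = A$ is (finite-dimensional, therefore) closed and $^{ic}A = {}^{i}A = A$ (an affine set is its own relative algebraic interior, in a finite-dimensional space this is just the ordinary relative interior and it is nonempty since $A \neq \emptyset$). Second, the hypothesis $A \cap {}^{ic}D(T) \neq \emptyset$ directly gives $^{ic}D(T) \neq \emptyset$. For the coupling condition, pick $z_0 \in A \cap {}^{ic}D(T)$. Then $z_0 \in {}^{ic}D(T)$ and $z_0 \in A = D(N_A)$, so $0 = z_0 - z_0 \in {}^{ic}D(T) - D(N_A) \subset D(T) - D(N_A)$. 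The point to be careful about is that $0 \in {}^{ic}(D(T) - D(N_A))$ requires $0$ to be in the \emph{relative algebraic interior of the difference set with respect to the closure of its affine hull}, not merely $0 \in {}^{ic}D(T) - D(N_A)$; this is where a small argument is needed.

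The key step, then, is the following standard fact about relative algebraic interiors of Minkowski differences: if $C_1, C_2$ are convex sets with $^{ic}C_1 \neq \emptyset$, $^{ic}C_2 \neq \emptyset$ and the affine hulls behave well, then $^{ic}C_1 - {}^{ic}C_2 \subset {}^{ic}(C_1 - C_2)$. Here one takes $C_1 = D(T)$ (or rather $\operatorname*{conv}D(T)$, convex since $T \in \mathscr{M}(X)$ has near-convex domain by the properties recalled in the Preliminaries, so $\operatorname*{ri}D(T) = {}^{ic}D(T)$ is convex and $D(T) \subset \operatorname*{cl}(\operatorname*{ri}D(T))$) and $C_2 = A$. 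Since $C_2 = A$ is finite-dimensional affine, $C_1 - C_2$ has the same affine hull structure as $C_1$ translated, $\operatorname*{aff}(C_1 - C_2) = \operatorname*{aff}C_1 - \operatorname*{aff}C_2$, which is closed when $\operatorname*{aff}D(T)$ is closed; but even when it is not, one reduces to the line-segment/near-convexity argument: any point on the segment from $z_0 - z_0 = 0$ toward another point of $D(T) - A$ stays in the relative algebraic interior. Concretely, given $w \in D(T) - A$, write $w = x - a$ with $x \in D(T)$, $a \in A$; then for $\lambda \in [0,1)$, $(1-\lambda)\cdot 0 + \lambda w = (\lambda x + (1-\lambda)z_0) - (\lambda a + (1-\lambda)z_0)$ and the first parenthesis lies in $^{ic}D(T)$ (since $z_0 \in {}^{ic}D(T)$, $x \in D(T)$, and $^{ic}$ of a convex set absorbs segments toward the closure) while the second lies in $A$; this exhibits $0$ as a relative algebraic interior point of $D(T) - A$ in the required sense.

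The main obstacle I anticipate is purely bookkeeping: being precise about which flavour of relative interior ($^{ic}$ versus $^{i}$ versus $\operatorname*{ri}$) is in play at each step and invoking the right convexity of $D(T)$ — this is legitimate because for $T \in \mathscr{M}(X)$ the Preliminaries record $\operatorname*{ri}D(T) = {}^{ic}D(T) = {}^{ic}(\operatorname*{ri}D(T))$ and that $\overline{D(T)}$ is convex, so $D(T)$ is near-convex and all the segment arguments go through. Once the constraint qualification $0 \in {}^{ic}(D(T) - A)$ is in hand, Theorem~\ref{mmcs} applies verbatim and yields $T + N_A \in \mathscr{M}(X)$, completing the proof.
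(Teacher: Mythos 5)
Your overall strategy is exactly the paper's: apply the sum rule (Theorem \ref{mmcs}) to $M=T$ and $N=N_{A}$, note that $N_{A}\in\mathscr{M}(X)$ and $^{ic}A=A\neq\emptyset$ because a finite-dimensional affine set is closed and convex, and reduce everything to the constraint qualification $0\in{}^{ic}(D(T)-A)$. The place where your write-up does not close is precisely that qualification. To certify that $0$ lies in the \emph{relative algebraic interior} of $D(T)-A$ you must produce, for \emph{every} direction $x\in\operatorname*{aff}(D(T)-A)=\operatorname*{aff}D(T)-A$, some $\rho>0$ with $\lambda x\in D(T)-A$ for all $\lambda\in[0,\rho]$. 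Your concrete segment computation only tests directions $w$ that already lie in $D(T)-A$; that property is enjoyed by every point of a convex set (e.g.\ $0\in[0,1]\subset\mathbb{R}$ satisfies it but is not in the relative algebraic interior of $[0,1]$), so it cannot by itself exhibit $0$ as a relative algebraic interior point. The paper's proof handles this by taking an arbitrary $x=u-a$ with $u\in\operatorname*{aff}D(T)$, $a\in A$, and using the definition of $z_{0}\in{}^{i}D(T)$ (segments from $z_{0}$ toward any point of $\operatorname*{aff}D(T)$, not merely of $D(T)$, enter $D(T)$ for small parameter) to get $\lambda x=(\lambda u+(1-\lambda)z_{0})-(\lambda a+(1-\lambda)z_{0})\in D(T)-A$ for $\lambda\in[0,\rho]$. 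Note that this argument needs no convexity of $D(T)$ at all, so your detour through $\operatorname*{conv}D(T)$ and near-convexity is both unnecessary and slightly off target: Theorem \ref{mmcs} asks for $0\in{}^{ic}(D(T)-A)$ with the actual domain, and since $D(T)-A\subset\operatorname*{conv}D(T)-A$ with the same affine hull, ${}^{ic}(\operatorname*{conv}D(T)-A)$ is a priori the \emph{larger} relative algebraic interior, so membership there does not transfer back to the set the theorem requires.

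A smaller point: the closedness of $\operatorname*{aff}(D(T)-A)$, needed for $^{ic}$ to be nonempty, is automatic and should be stated cleanly. The hypothesis $A\cap{}^{ic}D(T)\neq\emptyset$ forces $^{ic}D(T)\neq\emptyset$, which by the very definition of $^{ic}$ forces $\operatorname*{aff}D(T)$ to be closed; then $\operatorname*{aff}D(T)-A$ is closed as the sum of a closed affine set and a finite-dimensional one. The hedge ``but even when it is not'' has no content here, since if the affine hull were not closed the relative algebraic interior in question would be empty by definition and the sum rule could not be invoked at all.
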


\begin{proof} We may apply the previous theorem since $^{ic}A=A$,
$\operatorname*{aff}(D(T)-A)=\operatorname*{aff}D(T)-A$ is closed
since $\operatorname*{aff}D(T)$ is closed and $A$ is finite-dimensional,
and $0\in{}^{i}D(T)-A\subset{}^{i}(D(T)-A)$.

Indeed, for the latter inclusion let $x_{0}\in{}^{ic}D(T)$, $a_{0}\in A$.
Every $x\in\operatorname*{aff}(D(T)-A)$ has the form $x=u-a$, with
$u\in\operatorname*{aff}D(T)$, $a\in A$. Therefore there is $\rho>0$
such that $\lambda u+(1-\lambda)x_{0}\in D(T)$, for every $\lambda\in[0,\rho]$.
Hence $\lambda x+(1-\lambda)(x_{0}-a_{0})=\lambda u+(1-\lambda)x_{0}-(\lambda a+(1-\lambda)a_{0})\in D(T)-A$,
for every $\lambda\in[0,\rho]$, that is, $x_{0}-a_{0}\in{}^{ic}(D(T)-A)$.
The proof is complete. \end{proof}

\begin{proposition} \label{afin-int}Let $X$ be a Banach space.
If $T\in\mathscr{M}(X)$, $A$ is a closed affine subset of $X$,
and $A\cap\operatorname*{core}D(T)\neq\emptyset$ then $T+N_{A}\in\mathscr{M}(X)$.
\end{proposition}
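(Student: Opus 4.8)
The plan is to obtain the conclusion from the sum theorem, Theorem \ref{mmcs}, applied to $M:=T$ and $N:=N_{A}$. Since $A$ is a non-empty closed affine set, $I_{A}$ is a proper $s$--lsc convex function, so $N_{A}=\partial I_{A}\in\mathscr{M}(X)$; moreover $D(N_{A})=A$ is affine, hence $\operatorname*{aff}D(N_{A})=A$ is closed and $^{ic}D(N_{A})={}^{i}A=A\neq\emptyset$. Thus it remains only to verify that $^{ic}D(T)\neq\emptyset$ and that $0\in{}^{ic}(D(T)-A)$, after which Theorem \ref{mmcs} gives $T+N_{A}\in\mathscr{M}(X)$.

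First I would fix $x_{0}\in A\cap\operatorname*{core}D(T)$. In particular $\operatorname*{core}D(T)\neq\emptyset$, so $x_{0}\in\operatorname*{core}D(T)\subset\operatorname*{core}(\operatorname*{conv}D(T))$; since a set with non-empty algebraic interior has affine hull equal to the (closed) space $X$, this gives $^{ic}(\operatorname*{conv}D(T))=\operatorname*{core}(\operatorname*{conv}D(T))\neq\emptyset$. By the recalled structural properties of maximal monotone operators (namely $\operatorname*{ri}D(T)={}^{ic}D(T)$ and $\operatorname*{int}D(T)=\operatorname*{core}D(T)$ when $^{ic}(\operatorname*{conv}D(T))\neq\emptyset$, see \cite[Corollary 3]{MR2577332}) and the fact that $\operatorname*{aff}D(T)=X$, we get $^{ic}D(T)=\operatorname*{ri}D(T)=\operatorname*{int}D(T)=\operatorname*{core}D(T)\ni x_{0}$, so $^{ic}D(T)\neq\emptyset$.

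For the constraint qualification I would argue directly. Since $0=x_{0}-x_{0}$ lies in the linear subspace $A-x_{0}$, we have $D(T)-x_{0}\subset D(T)-A$, whence $\operatorname*{aff}(D(T)-A)\supset\operatorname*{aff}(D(T)-x_{0})=X$ (again because $\operatorname*{core}D(T)\neq\emptyset$); thus $\operatorname*{aff}(D(T)-A)=X$ is closed and $^{ic}(D(T)-A)=\operatorname*{core}(D(T)-A)$. Now let $v\in X$ be arbitrary. Because $x_{0}\in\operatorname*{core}D(T)$ there is $\rho>0$ with $x_{0}+\lambda v\in D(T)$ for all $\lambda\in[0,\rho]$, and then $\lambda v=(x_{0}+\lambda v)-x_{0}\in D(T)-A$ for all $\lambda\in[0,\rho]$. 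As $v$ was arbitrary, $0\in\operatorname*{core}(D(T)-A)={}^{ic}(D(T)-A)$, and Theorem \ref{mmcs} applies.

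The only point requiring care is the passage from $\operatorname*{core}D(T)\neq\emptyset$ to $^{ic}D(T)\neq\emptyset$, which rests on the structural coincidence (for maximal monotone $T$) of the relative algebraic interior, relative topological interior and, when non-empty, the algebraic and topological interior of $D(T)$; once this is in place, the remaining verifications of the hypotheses of the sum theorem are elementary core-versus-algebraic-interior computations exploiting that $x_{0}$ lies simultaneously in $A$ and in $\operatorname*{core}D(T)$.
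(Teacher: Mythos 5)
Your proof is correct and follows the route the paper intends: Proposition \ref{afin-int} is stated as a consequence of the sum rule Theorem \ref{mmcs}, and your verification of its hypotheses for $M=T$, $N=N_{A}$ mirrors the computation the paper carries out explicitly for the finite-dimensional analogue, Proposition \ref{afin-fin dim}. The only stylistic remark is that the detour through $\operatorname*{conv}D(T)$ and \cite[Corollary 3]{MR2577332} is unnecessary, since $\operatorname*{core}D(T)\neq\emptyset$ already forces $\operatorname*{aff}D(T)=X$ and hence $\emptyset\neq\operatorname*{core}D(T)\subset{}^{i}D(T)={}^{ic}D(T)$ directly.
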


\noindent This paper is mainly concerned with the following converse
of Proposition \ref{afin-fin dim}.

\medskip

\noindent \psframebox[shadow=true,shadowsize=4pt ]{

\begin{tabular}{l}
\textsf{\emph{Given $T\in\mathcal{M}(X)$ with the property that $T+N_{A}\in\mathscr{M}(X)$
for every finite-}}\tabularnewline
\textsf{\emph{dimensional affine $A$ (especially lines and planes)
such that $A\cap{}^{ic}D(T)\neq\emptyset$ }}\tabularnewline
\textsf{\emph{what additional conditions on $T$ are needed, such
as the closedness of its}}\tabularnewline
\textsf{\emph{graph or convexity of its values, in order to obtain
the maximality of $T$?}}\tabularnewline
\end{tabular}

\noindent \textsf{\emph{}}}

\begin{proposition} \label{1-dim-convex}Let $X$ be a Banach space
and let $T\in\mathcal{M}(X)$ be such that $^{ic}D(T)\neq\emptyset$.
If $T+N_{L}\in\mathscr{M}(X)$ for every line $L\subset\operatorname*{aff}D(T)$
such that $L\cap{}^{ic}D(T)\neq\emptyset$ then $T_{\operatorname*{aff}D(T),z}$
is NI, for every $z\in\operatorname*{aff}D(T)$ and $\operatorname*{ri}D(T)={}^{ic}D(T)$,
$\overline{D(T)}=\operatorname*{cl}({}^{ic}D(T))$ are convex sets.
In particular, $^{ic}D(T)$, $D(T)$, $\overline{D(T)}$ have the
same (convex) relative interior and closure. \end{proposition}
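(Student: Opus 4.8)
The plan is to reduce everything to the finite-dimensional case via the machinery of Section~3 and then invoke the known structure theory of maximal monotone operators in $\mathbb{R}^d$. First I would dispose of the affine-hull issue: by replacing $X$ with $F_0:=\operatorname*{aff}D(T)-z_0$ for a fixed $z_0\in{}^{ic}D(T)$ and $T$ with $T_{\operatorname*{aff}D(T),z_0}$, Corollary~\ref{swr} and Lemma~\ref{afin} show that all the relevant hypotheses and conclusions (NI, near-convexity of the domain, the line condition $T+N_L\in\mathscr{M}$, and the identities for $\operatorname*{ri}$ and closure) are preserved; note also that on $F_0$ the hypothesis $^{ic}D(T)\neq\emptyset$ becomes $\operatorname*{core}D(T)\neq\emptyset$. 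So assume $\operatorname*{aff}D(T)=X$ and $\operatorname*{core}D(T)\neq\emptyset$; I must then show $T$ is NI and that $\operatorname*{ri}D(T)=\operatorname*{core}D(T)$, $\overline{D(T)}=\operatorname*{cl}(\operatorname*{core}D(T))$ are convex.

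For the NI property: it suffices to show $\varphi_T(z)\geq c(z)$ for each $z=(x,x^*)\in Z$ that is monotonically related to $T$, i.e. to show any such $z$ actually lies in the monotone closure. Fix such a $z$; I would pick $x_0\in\operatorname*{core}D(T)$, let $v:=x-x_0$, and consider the line $L:=L(x_0;v)$ through $x_0$ in direction $v$ (if $v=0$ treat separately, or just pick any direction through $x_0$). Since $x_0\in\operatorname*{core}D(T)$ we have $L\cap\operatorname*{core}D(T)\neq\emptyset$, so $T+N_L\in\mathscr{M}(X)$ by hypothesis, hence by Lemma~\ref{red-fin dim}(i)--(ii) the one-dimensional operator $\mathscr{T}_{L,x_0}:\mathbb{R}\rightrightarrows\mathbb{R}$ is NI, equivalently maximal monotone on $\mathbb{R}$. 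Now $z$ being m.r.\ to $T$ forces the corresponding point $(t,s)$ (with $t$ the coordinate of $x$ on $L$ and $s=\langle v,x^*\rangle$) to be m.r.\ to $\mathscr{T}_{L,x_0}$; maximality in $\mathbb{R}$ gives $s\in\mathscr{T}_{L,x_0}(t)$, which unwinds to the existence of $x_1^*\in Tx$ with $\langle v,x_1^*\rangle=\langle v,x^*\rangle$. To upgrade this from one direction to all directions I would instead run the argument along a variable line, or more cleanly work with a plane or with the family of all lines through $x_0$: for any $w\in X$ the line $L(x_0;w)$ also meets $\operatorname*{core}D(T)$, so the same reasoning, combined with $x\in\overline{D(T)}$ from the $\operatorname*{core}$ hypothesis, pins down $x\in D(T)$ and then $\langle w,x^*\rangle=\langle w,x_1^*\rangle$ for suitable $x_1^*\in Tx$; choosing $w$ to range over $X$ yields $x^*\in Tx$, so $z\in T$ and $\varphi_T(z)=c(z)$. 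The same line-reduction, now applied to $T_{L,z}$ for arbitrary $z\in X=\operatorname*{aff}D(T)$, gives the NI property of $T_{\operatorname*{aff}D(T),z}$ claimed in the statement.

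For the convexity and interior/closure identities: with $\operatorname*{aff}D(T)=X$ and $\operatorname*{core}D(T)\neq\emptyset$, I want to show $D(T)$ is nearly-convex, as then $\operatorname*{ri}D(T)=\operatorname*{core}D(T)$ is convex, $D(T)\subset\operatorname*{cl}(\operatorname*{ri}D(T))$, and $\overline{D(T)}=\operatorname*{cl}(\operatorname*{core}D(T))$ is convex by the standard facts cited after Lemma~\ref{demi} (and \cite[Lemma 11A b), p. 59]{MR0410335}). To get near-convexity it is enough to show that for every $x\in D(T)$ and every $x_0\in\operatorname*{core}D(T)$ the whole segment $[x_0,x)$ lies in $\operatorname*{core}D(T)$ — equivalently that $D(\mathscr{T}_{L,x_0})$ is an interval for the line $L$ through $x_0$ and $x$. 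But $\mathscr{T}_{L,x_0}$ is maximal monotone on $\mathbb{R}$ (shown above), and a maximal monotone operator on $\mathbb{R}$ has an interval domain with the right interior/closure relations; pulling this back through $J$ via Lemma~\ref{int-cl-fin-dim}-type reasoning (or directly, since in dimension one everything is explicit) gives that $[x_0,x]\setminus\{x\}\subset\operatorname*{core}D(T)$. Running over all $x\in D(T)$ shows $D(T)$ is nearly-convex with $\operatorname*{ri}D(T)=\operatorname*{core}D(T)$, and the displayed equalities follow. The last sentence of the statement is then immediate: $^{ic}D(T)$, $D(T)$, $\overline{D(T)}$ all have relative interior $\operatorname*{core}D(T)$ and closure $\overline{D(T)}$.

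The main obstacle I anticipate is the passage from "for each single line through $x_0$, the point $x^*$ is pinned down on that line" to "$x^*\in Tx$": a priori the element $x_1^*\in Tx$ produced by the one-dimensional maximality depends on the chosen direction $w$, so one cannot naively intersect. The clean fix is to first establish $x\in D(T)$ (which follows once one knows, from the $\operatorname*{core}$ hypothesis and near-convexity, that every m.r.\ point has its first coordinate in $\overline{D(T)}$ and then that $D(T)$ is relatively open-ish enough — or alternatively by running a two-dimensional reduction $\mathscr{T}_{P,x_0}$ and using that maximal monotone operators in $\mathbb{R}^2$ have the boundary behavior forcing $x\in D(T)$), and then use that $Tx$ is the (convex, nonempty) set of subgradients-type object so that "$\langle w,x^*\rangle=\sigma_{Tx}$-type equality for all $w$" forces $x^*\in\overline{\operatorname*{conv}}^{\,w^*}Tx$; combined with $T+N_A$-maximality for a suitable finite-dimensional $A$ this last set is exactly $Tx$. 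So the real content is bookkeeping to ensure the finite-dimensional reductions are applied consistently and that the direction-dependent data assemble correctly, rather than any deep new estimate.
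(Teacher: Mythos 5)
Your opening reduction to $\operatorname*{aff}D(T)=X$ and your first step --- that any $(x,x^{*})$ monotonically related to $T$ is also monotonically related to $T+N_{L}$ for each line $L$ through $x$ meeting $\operatorname*{core}D(T)$, whence $x\in D(T)$ and, for each such direction $v$, some $x_{1}^{*}\in Tx$ has $\langle v,x_{1}^{*}\rangle=\langle v,x^{*}\rangle$ --- coincide in substance with the paper's. But from there both halves of your plan rest on claims that are false or unproved under the stated hypotheses. For the NI part you aim to show $z\in T$ for every monotonically related $z$, i.e.\ that $T$ is maximal; this cannot work, since the paper's example following Theorem \ref{c2-ic} (the operator on the quadrant $\{x\le0,\,y\le0\}$ equal to the normal cone except that $T(0,0)=\mathbb{R}_{+}(1,1)$) satisfies every hypothesis of Proposition \ref{1-dim-convex} and yet is not maximal. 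Your proposed repair is aimed at the same unattainable conclusion: the pinned-down equality $\langle w,x^{*}\rangle=\langle w,x_{1}^{*}\rangle$ is available only for directions $w$ with $L(x;w)\cap\operatorname*{core}D(T)\neq\emptyset$, and $Tx$ need not be convex or $w^{*}$-closed here, so no separation argument can deliver $x^{*}\in Tx$. The resolution is much cheaper and is what the paper does: NI does not require $x^{*}\in Tx$, only $\varphi_{T}(x,x^{*})\ge c(x,x^{*})$, and this already follows from $x\in D(T)$ because $D(T)\times X^{*}\subset[\varphi_{T}\ge c]$.

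The domain-structure half is also gapped. Maximality of $\mathscr{T}_{L,x_{0}}$ on $\mathbb{R}$ gives only that the open segment $(x_{0},x)$ lies in $D(T)$; it carries no information transverse to $L$, so it does not yield $[x_{0},x)\subset\operatorname*{core}D(T)$, nor the convexity of $\operatorname*{core}D(T)$, and Lemma \ref{int-cl-fin-dim} cannot be invoked because it presupposes the near-convexity you are trying to prove. The paper instead exploits the NI property just obtained: $T$ then has a unique maximal monotone extension $\mathcal{R}=[\psi_{T}=c]$, every point of $\mathcal{R}$ is monotonically related to $T$, so the first step gives $D(\mathcal{R})\subset D(T)$ and hence $D(\mathcal{R})=D(T)$; the identities $\operatorname*{int}D(T)=\operatorname*{core}D(T)$ and $\overline{D(T)}=\operatorname*{cl}(\operatorname*{int}D(T))$ and their convexity are then inherited wholesale from the maximal monotone operator $\mathcal{R}$. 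This use of the unique maximal extension is the idea missing from your proposal.
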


\begin{proof} First assume that $\operatorname*{aff}D(T)=X$. Let
$(x_{0},x_{0}^{*})$ be m.r. to $T$ and let $v\in X$ be such that
$L:=L(x_{0};v)$ cuts $^{ic}D(T)=\operatorname*{core}D(T)$. Since
$n^{*}\in N_{L}(x)$ iff $x\in L$ and $\langle v,n^{*}\rangle=0$,
it is easily checked that $(x_{0},x_{0}^{*})$ is m.r to $T+N_{L}\in\mathscr{M}(X)$.
Therefore, if $(x_{0},x_{0}^{*})$ is m.r. to $T$ then $x_{0}\in D(T)$
and for every $v$ such that $L(x_{0};v)\cap\operatorname*{core}D(T)\neq\emptyset$
there is $x^{*}\in Tx_{0}$ such that $\langle v,x_{0}^{*}\rangle=\langle v,x^{*}\rangle$.

In particular $T$ is NI (see \cite[(1)]{astfnc}) and in this case
we know that $\mathcal{R}:=[\psi_{T}=c]$ is the unique maximal monotone
extension of $T$ (see \cite[Proposition\ 4 (iii)]{MR2594359}). 

Every $(x,x^{*})\in\mathcal{R}$ is m.r. to $T$. Hence $x\in D(T)$
and for every $w$ such that $L(x;w)\cap\operatorname*{core}D(T)\neq\emptyset$
there is $y^{*}\in Tx$ such that $\langle w,x^{*}\rangle=\langle w,y^{*}\rangle$.
Therefore $D(\mathcal{R})=D(T)$.

Since $\mathcal{R}\in\mathscr{M}(X)$ and $\operatorname*{core}D(\mathcal{R})\neq\emptyset$
we have $\operatorname*{int}D(T)=\operatorname*{core}D(T)$ and $\operatorname*{cl}D(T)=\operatorname*{cl}D(\mathcal{R})=\operatorname*{cl}(\operatorname*{int}D(\mathcal{R}))=\operatorname*{cl}(\operatorname*{int}D(T))$
are convex sets (see \cite[Corollary\ 3]{MR2577332} or \cite[Theorem\ 1]{MR0253014}).

In the general case we may assume without loss of generality that
$0\in\operatorname*{aff}D(T)=:F$ otherwise we change $T$ with $T_{z}$
where $z\in\operatorname*{aff}D(T)$. We use the first part of our
proof for $T_{F}\in\mathcal{M}(F)$. To this end note first that $T_{F}+(N_{L})_{F}=(T+N_{L})_{F}$,
where $(N_{L})_{F}\in\mathscr{M}(F)$ is the normal cone to $L\subset F$.
According to Lemma \ref{sub}, $(T+N_{L})_{F}\in\mathscr{M}(F)$,
since $(T+N_{L})+N_{F}=T+N_{L}\in\mathscr{M}(X)$, for every line
$L\subset F$, such that $L\cap{}^{ic}D(T)\neq\emptyset$. Therefore
$\operatorname*{ri}D(T)={}^{ic}D(T)$, $\overline{D(T)}=\operatorname*{cl}({}^{ic}D(T))$
are convex sets and $T_{F,z}=(T_{F})_{z}=(T_{z})_{F}$ is NI, for
every $z\in F$. The sets $^{ic}D(T)$, $D(T)$, $\overline{D(T)}$
have the same relative interior due to $\operatorname*{ri}\overline{D(T)}=\operatorname*{ri}(\operatorname*{cl}(\operatorname*{ri}D(T)))=\operatorname*{ri}D(T)$
(see e.g. \cite[Lemma 11A b), p. 59]{MR0410335}). \end{proof}

\begin{theorem} \label{c-1dim-int-fin dim}Let $T\in\mathcal{M}(\mathbb{R}^{d})$
be such that $\operatorname*{core}D(T)\neq\emptyset$. Then $T\in\mathscr{M}(X)$
iff

\emph{(i)} $T+N_{L}\in\mathscr{M}(\mathbb{R}^{d})$ for every line
$L\subset\mathbb{R}^{d}$ such that $L\cap\operatorname*{core}D(T)\neq\emptyset$,

\emph{(ii)} $T$ has closed convex values,

\emph{(iii)} $Tx=Tx+N_{\overline{D(T)}}x$, for every $x\in D(T)\setminus\operatorname*{core}D(T)$.\end{theorem}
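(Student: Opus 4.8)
The plan is to prove the two implications separately, reducing everything to the one-dimensional line data so that the finite-dimensional characterization from \cite[Theorem 3.4]{MR2465513} (recalled in the paper as the target of Section 4) becomes available.

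For the forward implication, assume $T\in\mathscr{M}(\mathbb{R}^d)$. Condition (i) is then immediate from Proposition \ref{afin-fin dim} applied with $A=L$ a line, using that $L\cap\operatorname*{core}D(T)\neq\emptyset$ forces $L\cap{}^{ic}D(T)\neq\emptyset$ (in $\mathbb{R}^d$ the algebraic relative interior, the core, and the relative interior coincide). Condition (ii) is part of the standard structural facts about maximal monotone operators recalled in Section 2: every representable operator has $w^*$-closed convex values, and $T\in\mathscr{M}(\mathbb{R}^d)$ is representable. For condition (iii), I would argue that for $x\in D(T)$ one always has $Tx+N_{\overline{D(T)}}x\subset Tx$ whenever $T$ is maximal monotone: if $x^*\in Tx$ and $n^*\in N_{\overline{D(T)}}x$, then $(x,x^*+n^*)$ is monotonically related to $T$ because $\langle x-y, x^*-y^*\rangle\ge 0$ for $(y,y^*)\in T$ and $\langle x-y,n^*\rangle\ge 0$ since $y\in D(T)\subset\overline{D(T)}$; maximality then gives $x^*+n^*\in Tx$. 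The reverse inclusion is trivial since $0\in N_{\overline{D(T)}}x$. (Note this direction in fact holds for all $x\in D(T)$, not just boundary points.)

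For the converse, suppose (i), (ii), (iii) hold. First I would invoke Proposition \ref{1-dim-convex}: since (i) holds for every line meeting ${}^{ic}D(T)=\operatorname*{core}D(T)$, that proposition yields that $T$ is NI, that $\overline{D(T)}$ is convex, and that $D(T)$ is nearly-convex with $\operatorname*{ri}D(T)={}^{ic}D(T)=\operatorname*{core}D(T)$. Because $T$ is NI, it has a unique maximal monotone extension $\mathcal{R}=[\psi_T=c]$, and the goal is to show $T=\mathcal{R}$. Take $(x_0,x_0^*)\in\mathcal{R}$, so $(x_0,x_0^*)$ is monotonically related to $T$. From the m.r.\ property together with (i) (as in the proof of Proposition \ref{1-dim-convex}), one gets $x_0\in D(T)$ and, for every direction $v$ with $L(x_0;v)$ cutting $\operatorname*{core}D(T)$, some $y^*_v\in Tx_0$ with $\langle v,x_0^*\rangle=\langle v,y^*_v\rangle$. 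The task is to upgrade this ``one direction at a time'' information to $x_0^*\in Tx_0$, and this is where (ii) and (iii) enter.

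The main obstacle is precisely this last step: patching the directional agreements into membership $x_0^*\in Tx_0$. I would split into cases. If $x_0\in\operatorname*{core}D(T)$, then \emph{every} line through $x_0$ cuts $\operatorname*{core}D(T)$, so $\langle v,x_0^*-y^*_v\rangle=0$ for all $v$; using that $Tx_0$ is closed convex (condition (ii)) and a separation argument — if $x_0^*\notin Tx_0$, separate it strictly from the compact convex $Tx_0$ by some $v$, contradicting the existence of $y^*_v\in Tx_0$ with $\langle v,x_0^*\rangle=\langle v,y^*_v\rangle\le\sigma_{Tx_0}(v)<\langle v,x_0^*\rangle$ — one concludes $x_0^*\in Tx_0$. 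If $x_0\in D(T)\setminus\operatorname*{core}D(T)$, the admissible directions $v$ are only those for which $L(x_0;v)$ meets $\operatorname*{core}D(T)$, i.e.\ $v$ in the (relative) cone $\operatorname*{core}D(T)-x_0$ direction set, whose polar is exactly $N_{\overline{D(T)}}x_0$. So the separation argument now only shows $x_0^*\in Tx_0+N_{\overline{D(T)}}x_0$, and condition (iii) is exactly what closes the gap, giving $x_0^*\in Tx_0$. Hence $\mathcal{R}\subset T$, so $T=\mathcal{R}\in\mathscr{M}(\mathbb{R}^d)$. I would take care with the reduction to $\operatorname*{aff}D(T)=\mathbb{R}^d$ (here $D(T)$ has nonempty core, so this is automatic), and with the compactness of $Tx_0$, which follows from local boundedness of $T$ at interior points of $D(T)$ together with the closed-convex-valued hypothesis.
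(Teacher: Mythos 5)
Your forward implication and the overall architecture of the converse (reduce to directional information via (i) and Proposition \ref{1-dim-convex}, then separate from the closed convex set $Tx_0$ and use (iii) at boundary points) match the paper's strategy. The gap is in the boundary case, at the sentence ``the separation argument now only shows $x_0^*\in Tx_0+N_{\overline{D(T)}}x_0$.'' What the directional agreements actually give is $\langle v,x_0^*\rangle\le\sigma_{Tx_0}(v)$ only for $v$ in the open cone $S:=\bigcup_{h>0}h(\operatorname*{core}D(T)-x_0)=\operatorname*{int}T_{\overline{D(T)}}x_0$. To conclude $x_0^*\in Tx_0+N_{\overline{D(T)}}x_0$ by bipolarity you would need this inequality on all of $\overline{S}=T_{\overline{D(T)}}x_0=(N_{\overline{D(T)}}x_0)^{-}$; and if instead you argue by separation from $Tx_0=Tx_0+N_{\overline{D(T)}}x_0$, the separating direction $v$ (i.e.\ $v\in[\sigma_{C}<0]$ with $C=Tx_0-x_0^*$) is only guaranteed to lie in $\overline{S}$, possibly on its boundary, where no line $L(x_0;v)$ cuts $\operatorname*{core}D(T)$ and the directional agreement is unavailable. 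Passing from $S$ to $\overline{S}$ (dually: pushing the separating direction into $S$) is not formal: a lsc sublinear function can dominate a linear one on an open convex cone yet fail to do so on the boundary, for instance when it is identically $+\infty$ on that open cone.

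This is precisely where the paper invests its effort: it shows that $C$ contains no lines (by playing monotonicity against a ball inside $\operatorname*{int}D(T)$), invokes Rockafellar's Corollary 13.4.2 to get $\operatorname*{int}(\operatorname*{dom}\sigma_{C})\neq\emptyset$, uses continuity of $\sigma_{C}$ there to conclude $\operatorname*{int}([\sigma_{C}<0])\neq\emptyset$, and then observes that a nonempty open subset of $T_{\overline{D(T)}}x_0$ must meet $\operatorname*{int}T_{\overline{D(T)}}x_0=S$, producing a separating direction that is also a good direction --- the desired contradiction. Your proof can be repaired along your own lines: local boundedness of $T$ at some $z\in\operatorname*{int}D(T)$ gives $\sigma_{Tx_0}(z-x_0)<\infty$ with $z-x_0\in S$, and then a convex-combination argument extends $\sigma_{Tx_0}\ge\langle\cdot,x_0^*\rangle$ from $S$ to $\overline{S}$, after which (iii) and bipolarity finish. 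But as written this step is asserted rather than proved, and it is the heart of the theorem.
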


\begin{proof} The direct implication is clear since (i) follows from
Proposition \ref{afin-fin dim} and (ii), (iii) are usual properties
of maximal monotone operators.

For the converse suppose that (i), (ii), (iii) hold. As seen in the
proof of Proposition \ref{1-dim-convex}, if $(x_{0},x_{0}^{*})$
is m.r. to $T$ then $x_{0}\in D(T)$, \begin{equation}
\forall v:\ L(x_{0};v)\cap\operatorname*{core}D(T)\neq\emptyset,\ \exists x^{*}\in Tx_{0},\ \langle v,x_{0}^{*}\rangle=\langle v,x^{*}\rangle,\label{mrt1}\end{equation}
and $\operatorname*{core}D(T)=\operatorname*{int}D(T)=\operatorname*{int}\overline{D(T)}$,
$\overline{D(T)}$ are convex sets.

Assume that $x_{0}^{*}\not\in Tx_{0}$. Since $Tx_{0}$ is closed
convex, for some $y_{0}\in\mathbb{R}^{d}$ \begin{equation}
\langle y_{0},x_{0}^{*}\rangle>\sigma_{Tx_{0}}(y_{0}):=\sup\{\langle y_{0},x^{*}\rangle\mid\ x^{*}\in Tx_{0}\},\label{sep-fin dim}\end{equation}
that is, $[\sigma_{C}<0]$ is non-empty, where $C:=Tx_{0}-x_{0}^{*}\not\ni0$.

Relation (\ref{mrt1}) shows that $L(x_{0};z)\cap\operatorname*{int}D(T)=\emptyset$
for every $z\in[\sigma_{C}<0]$. Hence $x_{0}\not\in\operatorname*{int}D(T)$
and $z\not\in\operatorname*{int}T_{\overline{D(T)}}x_{0}=\cup_{h>0}\frac{1}{h}(\operatorname*{int}D(T)-x_{0})$
(see e.g. \cite[Proposition\ 7, p. 169]{MR749753}), where $T_{\overline{D(T)}}x_{0}$
stands for the tangent cone to the closed convex set $\overline{D(T)}$
at $x_{0}$. Therefore $[\sigma_{C}<0]\cap\operatorname*{int}T_{\overline{D(T)}}x_{0}=\emptyset$.

Note that $[\sigma_{C}<0]$ is a convex cone whose closure $[\sigma_{C}\le0]\subset T_{\overline{D(T)}}x_{0}$.
Indeed, using $Tx_{0}=Tx_{0}+N_{\overline{D(T)}}x_{0}$, every $z\in[\sigma_{C}\le0]$
satisfies $\langle z,n^{*}\rangle\le0$, for every $n^{*}\in N_{\overline{D(T)}}x_{0}$,
that is, $z\in(N_{\overline{D(T)}}x_{0})^{-}=T_{\overline{D(T)}}x_{0}$
(see e.g. \cite[Proposition\ 4, p. 168]{MR749753}).

Assume that for some $u_{0}^{*},v^{*}\in\mathbb{R}^{d}$, $L(u_{0}^{*};v^{*}):=\{u_{0}^{*}+tv^{*}\mid t\in\mathbb{R}\}\subset Tx_{0}$.
Since $T$ is monotone for every $u\in D(T)$, $u^{*}\in Tu$, $t\in\mathbb{R}$,
we find $\langle x_{0}-u,u_{0}^{*}+tv^{*}-u^{*}\rangle_{\mathbb{R}^{d}}\ge0$;
whence $\langle x_{0}-u,v^{*}\rangle_{\mathbb{R}^{d}}=0$. Since $x_{0}-D(T)$
has a non-empty interior this yields that $v^{*}=0$ and so $L(u_{0}^{*};v^{*})$
is a singleton. Therefore $C$ does not contain lines. 

According to \cite[Corollary\ 13.4.2, p. 118]{MR1451876}, $\operatorname*{int}(\operatorname*{dom}\sigma_{C})\neq\emptyset$
and so $\sigma_{C}$ is continuous on $\operatorname*{int}(\operatorname*{dom}\sigma_{C})$.
Since $[\sigma_{C}<0]$ is nonempty we find that $\operatorname*{int}([\sigma_{C}<0])\neq\emptyset$.
Indeed let $x\in[\sigma_{C}<0]$ and $y\in\operatorname*{int}(\operatorname*{dom}\sigma_{C})$.
Then for some $0\le t<1$, $x_{t}:=tx+(1-t)y\in[\sigma_{C}<0]\cap\operatorname*{int}(\operatorname*{dom}\sigma_{C})$.
Since $\sigma_{C}$ is continuous at $x_{t}$ it follows that $x_{t}\in\operatorname*{int}([\sigma_{C}<0])$.
This yields the contradiction $[\sigma_{C}<0]\cap\operatorname*{int}T_{\overline{D(T)}}x_{0}\neq\emptyset$.
Therefore $x_{0}^{*}\in Tx_{0}$ and $T\in\mathscr{M}(\mathbb{R}^{d})$.
\end{proof}

\strut

Condition (iii) in the previous theorem is equivalent to $T=T+N_{\overline{D(T)}}$.

\begin{theorem} \label{c-1dim-ic-fin dim}Let $T\in\mathcal{M}(\mathbb{R}^{d})$
be such that $^{i}D(T)\neq\emptyset$. Then $T\in\mathscr{M}(\mathbb{R}^{d})$
iff

\emph{(i)} $T+N_{L}\in\mathscr{M}(\mathbb{R}^{d})$ for every line
$L\subset\operatorname*{aff}D(T)$ such that $L\cap{}^{i}D(T)\neq\emptyset$,

\emph{(ii)} $T$ has closed convex values,

\emph{(iii)} $T=T+N_{\overline{D(T)}}$.\end{theorem}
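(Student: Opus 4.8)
The plan is to reduce Theorem~\ref{c-1dim-ic-fin dim} to the already-proved full-dimensional Theorem~\ref{c-1dim-int-fin dim} by passing to the affine hull of $D(T)$. The direct implication requires no work: if $T\in\mathscr{M}(\mathbb{R}^d)$ then (i) is Proposition~\ref{afin-fin dim} (applied to lines $L$ meeting $^{i}D(T)={}^{ic}D(T)$, since $\operatorname*{aff}D(T)$ is automatically closed in $\mathbb{R}^d$), while (ii) and (iii) are standard properties of maximal monotone operators in finite dimensions. So the content is the converse.

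\medskip

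For the converse, assume (i)--(iii). After a translation we may assume $0\in\operatorname*{aff}D(T)=:F$, a linear subspace of $\mathbb{R}^d$, say of dimension $k$. I would pass to the restriction $T_F:F\rightrightarrows F^*$ and note that under the identification $F\cong\mathbb{R}^k$ the operator $T_F$ (equivalently $\mathscr{T}_{F,0}$ via the isomorphism $\mathscr{I}$) satisfies $\operatorname*{core}D(T_F)={}^{i}D(T)\neq\emptyset$ by Lemma~\ref{int-cl-fin-dim} (with $A=F$), so Theorem~\ref{c-1dim-int-fin dim} is applicable to $T_F$ once its three hypotheses are checked. Hypothesis (i) for $T_F$: for a line $L\subset F$ meeting $\operatorname*{core}D(T_F)={}^{i}D(T)$ we have $T_F+(N_L)_F=(T+N_L)_F$ and, since $(T+N_L)+N_F=T+N_L\in\mathscr{M}(\mathbb{R}^d)$ by (i), Lemma~\ref{sub} gives $(T+N_L)_F\in\mathscr{M}(F)$; so $T_F+N_L\in\mathscr{M}(F)$ for every such line. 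Hypothesis (ii) for $T_F$: by (ii) $T$ has closed convex values and $T=T+N_F$ (this follows from (iii) because $N_{\overline{D(T)}}\supset N_{\operatorname*{aff}\overline{D(T)}}=N_F$, so $T=T+N_{\overline{D(T)}}\supset T+N_F\supset T$); then Proposition~\ref{val-inchise} (or the direct inclusion $(\iota_F^*)^{-1}(T_F x)=Tx+F^\perp=Tx$) together with the quotient-space isomorphism argument shows $T_F$ has closed convex values in $F^*$. Hypothesis (iii) for $T_F$: from (iii) and $T=T+N_F$ one deduces $T=T+N_{\overline{D(T)}}$ with $\overline{D(T)}\subset F$; applying Lemma~\ref{her-cone}-style chain rule (or Lemma~\ref{her-cone} directly, with $A=F$ and noting $0\in{}^{ic}(F-\overline{D(T)})$) yields $T_F=T_F+N_{\overline{D(T_F)}}$, where $\overline{D(T_F)}=\overline{D(T)}$ inside $F$.

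\medskip

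Granting these three checks, Theorem~\ref{c-1dim-int-fin dim} gives $T_F\in\mathscr{M}(F)$, i.e.\ $T_F\in\mathscr{M}(\mathbb{R}^k)$. Since $F$ is closed in $\mathbb{R}^d$ and $T=T+N_F$, Lemma~\ref{sat} (implication (ii)$\Rightarrow$(i), which needs $F$ closed) then delivers $T\in\mathscr{M}(\mathbb{R}^d)$, completing the proof. One could also phrase the last step through Lemma~\ref{afinr} with the affine set $A=\operatorname*{aff}D(T)$.

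\medskip

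The main obstacle I anticipate is purely bookkeeping rather than conceptual: one must be careful that the hypotheses of Theorem~\ref{c-1dim-int-fin dim} translate correctly under the restriction to $F$, in particular that ``$\operatorname*{core}$ relative to $F$'' of $D(T_F)$ is exactly $^{i}D(T)$ (this is where near-convexity of $D(T)$, itself a consequence of (i) via Proposition~\ref{1-dim-convex}, and Lemma~\ref{int-cl-fin-dim} are needed), and that condition (iii) for $T$ on $\mathbb{R}^d$ really upgrades to the full normal-cone identity $T=T+N_{\overline{D(T)}}$ and then restricts to the analogous identity for $T_F$ via the chain rule for normal cones of a preimage under the linear map $J$. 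Everything else is a direct invocation of the lemmas of Sections~3--4.
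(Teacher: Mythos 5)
Your proposal is correct and follows essentially the same route as the paper: reduce to $T_F$ with $0\in F:=\operatorname*{aff}D(T)$, transmit (i) via Lemma \ref{sub}, (ii) via Proposition \ref{val-inchise}, and (iii) by heredity, apply Theorem \ref{c-1dim-int-fin dim} to $T_F$, and conclude with $T=T+N_F$ (from $N_F\subset N_{\overline{D(T)}}$) and Lemma \ref{sat}. The only cosmetic difference is that you route the heredity of (iii) through a chain-rule argument where a direct computation with $\iota_F^*$ suffices, and you invoke near-convexity for the identity $\operatorname*{core}_F D(T_F)={}^{i}D(T)$, which actually holds by definition since $D(T)\subset F=\operatorname*{aff}D(T)$.
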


\begin{proof} The direct implication is trivial while for the converse
assume, without loss of generality, that $0\in F:=\operatorname*{aff}D(T)$.

Conditions (i), (ii), (iii) are transmitted from $T$ to $T_{F}$
via the fact that for every line $L\subset F$, $T+N_{L}+N_{F}=T+N_{L}\in\mathscr{M}(\mathbb{R}^{d})$
and so by Lemma \ref{sub}, $(T+N_{L})_{F}=T_{F}+N_{L}\in\mathscr{M}(F)$
(argument already seen in the proof of Proposition \ref{1-dim-convex}),
(iii) is easily checked to be hereditary from $T$ to $T_{F}$, and
$T_{F}$ has closed convex values because so has $T$, $F$ is finite
dimensional, and (iii) holds (see also Proposition \ref{val-inchise}).
According to Theorem \ref{c-1dim-int-fin dim}, $T_{F}\in\mathscr{M}(F)$.

Again (iii) and $N_{F}\subset N_{\overline{D(T)}}$ provide $T=T+N_{F}$.
According to Lemma \ref{sat} $T\in\mathscr{M}(\mathbb{R}^{d})$.
\end{proof}

\strut

Theorem \ref{c-1dim-int-fin dim} allows us to re-demonstrate Löhne's
characterization of maximal monotonicity for finite-dimensional operators
(see \cite[Theorem 3.4]{MR2465513}).

\begin{theorem}\label{Lohne} An operator $T\in\mathscr{M}(\mathbb{R}^{d})$
iff \emph{(i)} $T\in\mathcal{M}(\mathbb{R}^{d})$; \emph{(ii)} $D(T)$
is nearly convex, that is, there is a convex set $C$ such that $C\subset D(T)\subset\overline{C}$;
\emph{(iii)} $T$ has convex values; \emph{(iv)} $T=T+N_{D(T)}$;
\emph{(v)} $T$ is closed. \end{theorem}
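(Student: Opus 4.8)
The forward implication is essentially classical, and I would dispose of it quickly: if $T\in\mathscr{M}(\mathbb{R}^{d})$ then (i) is the definition, (iii) and (v) are the standard facts that a maximal monotone operator in $\mathbb{R}^{d}$ has closed convex values and a closed graph, (iv) is its standard saturation property, and (ii) follows from the recalled fact that $\operatorname*{ri}D(T)$ is nonempty, convex and dense in $D(T)$ (take $C:=\operatorname*{ri}D(T)$). So the real content is the converse, which I would prove as follows. Assume (i)--(v). First reduce to $\operatorname*{aff}D(T)=\mathbb{R}^{d}$: since $D(T)$ is nearly-convex, after a translation $F:=\operatorname*{aff}D(T)$ is a closed subspace, $D(T)\subset F$, and $T=T+N_{F}$ because $F^{\perp}\subset N_{D(T)}(x)$ for every $x\in D(T)$; the hereditary results of Section~3 (Propositions~\ref{val-inchise}, \ref{demi}, together with $N_{D(T_{F})}(x)=\iota_{F}^{*}(N_{D(T)}(x))$) then show $T_{F}$ still satisfies (i)--(v) on $F$, and by Lemma~\ref{sat} it suffices to prove $T_{F}\in\mathscr{M}(F)$. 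Hence I may assume $\operatorname*{core}D(T)={}^{i}D(T)\neq\emptyset$.

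The heart of the matter is the one-dimensional case $d=1$, which must be treated directly: Theorem~\ref{c-1dim-int-fin dim} degenerates there, because the only line in $\mathbb{R}$ is $\mathbb{R}$ itself and $N_{\mathbb{R}}=\{0\}$, so its hypothesis~(i) reduces to the tautology ``$T\in\mathscr{M}(\mathbb{R})$''. In this case $\overline{D(T)}=[a,b]$ is a closed interval with $a<b$ (singleton domains being excluded), $T$ has closed convex values, and by (iv), $Tx=Tx+N_{\overline{D(T)}}(x)$ for $x\in D(T)$. Given $(x_{0},x_{0}^{*})$ monotonically related to $T$, monotonicity controls the one-sided bounds of the values of $T$, and a short case analysis according to whether $x_{0}$ is interior to $D(T)$ or equals $a$ or $b$ -- using graph closedness~(v) applied to a sequence $x_{n}\to x_{0}$ in $D(T)$ whose images are kept bounded by monotonicity and by the monotone relation, and using~(iv), which forces $Tb$ (resp.\ $Ta$) to be unbounded above (resp.\ below) so that no gap can occur at a boundary point -- yields $x_{0}\in D(T)$ and $x_{0}^{*}\in Tx_{0}$. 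Thus $T\in\mathscr{M}(\mathbb{R})$. I expect this base case to be \emph{the main obstacle}: both (iv) and (v) are genuinely needed here, as elementary interval-valued examples on $\mathbb{R}$ show.

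For $d\ge2$ I would apply Theorem~\ref{c-1dim-int-fin dim} to $T$ and verify its three hypotheses. Hypothesis~(ii) there holds since $T$ has convex values by (iii) and closed values by (v). Hypothesis~(iii) there, $T=T+N_{\overline{D(T)}}$, is (iv) together with the elementary identity $N_{D(T)}(x)=N_{\overline{D(T)}}(x)$ for $x\in D(T)$, valid because near-convexity of $D(T)$ makes $\overline{D(T)}=\operatorname*{cl}(\operatorname*{ri}D(T))$ convex. For hypothesis~(i) there, fix a line $L\subset\mathbb{R}^{d}$ meeting $\operatorname*{core}D(T)$, choose $z\in L\cap\operatorname*{core}D(T)$ and a direction $v$ of $L$; by Lemma~\ref{red-fin dim}(ii) it suffices to show $\mathscr{T}_{L,z}\in\mathscr{M}(\mathbb{R})$. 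But $\mathscr{T}_{L,z}$ is monotone on $\mathbb{R}$; its domain is nearly-convex with $\operatorname*{int}D(\mathscr{T}_{L,z})=J^{-1}(\operatorname*{ri}D(T)-z)\ni0$ by Lemma~\ref{int-cl-fin-dim}; its values $\mathscr{T}_{L,z}(t)=\{\langle v,x^{*}\rangle\mid x^{*}\in T(z+tv)\}$ are convex (linear images of convex sets) and, by Proposition~\ref{cl-val-graph}(ii), closed, since (v) makes $T$ demiclosed in this finite-dimensional setting; and $\mathscr{T}_{L,z}=\mathscr{T}_{L,z}+N_{D(\mathscr{T}_{L,z})}$ by Lemma~\ref{her-cone}, whose hypotheses (near-convexity of $D(T)$ and (iv)) hold. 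Hence $\mathscr{T}_{L,z}$ satisfies (i)--(v) in dimension one, so by the base case $\mathscr{T}_{L,z}\in\mathscr{M}(\mathbb{R})$, which is exactly hypothesis~(i) of Theorem~\ref{c-1dim-int-fin dim}. Therefore $T\in\mathscr{M}(\mathbb{R}^{d})$, and the proof is complete.
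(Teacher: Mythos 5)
Your proof is correct and follows essentially the same route as the paper: reduce to $\operatorname*{aff}D(T)=\mathbb{R}^{d}$ via Lemma~\ref{sat}, verify the hypotheses of Theorem~\ref{c-1dim-int-fin dim} by passing to the one-dimensional restrictions $\mathscr{T}_{L,z}$ along lines through an interior point (Lemma~\ref{red-fin dim}(ii)), and settle the matter with a one-dimensional characterization. The only real difference is that your ``base case'' re-derives in sketch form what the paper already isolates as Theorem~\ref{c-1dim-lim}(ii) --- your stated ingredients (monotonicity bounding the images of a sequence $x_{n}\to x_{0}$, closedness (v) to capture the limit, and (iv) forcing unboundedness of $T$ at finite endpoints of $D(T)$) are exactly the ones that theorem's proof uses, so you could simply cite it and make the argument fully rigorous.
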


More precisely, \cite[Theorem 3.4 (iv)]{MR2465513} states that $(Tx)_{\infty}=N_{D(T)}x$,
for every $x\in D(T)$, where $(Tx)_{\infty}$ stands for the recession
cone of $Tx$. Since $Tx+(Tx)_{\infty}=Tx$, it is straightforward
that \cite[Theorem 3.4 (iv)]{MR2465513} implies our condition (iv).
Therefore Theorem \ref{Lohne} has a slightly weaker assumption (iv)
than \cite[Theorem 3.4]{MR2465513}. 

Theorem \ref{Lohne} is trivial for a singleton domain operator. The
following characterization of maximal monotonicity for $1-$dimensional
operators is a simplified version and is used in the proof of Theorem
\ref{Lohne}.

\begin{theorem}\label{c-1dim-lim} Let $U:\mathbb{R}\rightrightarrows\mathbb{R}$
be such that $\operatorname*{int}D(U)=:(\alpha,\omega)\neq\emptyset$.
Consider the assumption: (F) $U(\alpha)=U(\alpha)+\mathbb{R}_{-}$
whenever $\alpha\in D(U)$, $U(\omega)=U(\omega)+\mathbb{R}_{+}$
whenever $\omega\in D(U)$.

TFAE:

\noindent \emph{(i)} $U\in\mathscr{M}(\mathbb{R})$,

\medskip

\noindent \emph{(ii)} $U\in\mathcal{M}(\mathbb{R})$, $U$ has convex
values, (F) holds, and $U$ is closed,

\medskip

\noindent \emph{(iii)} $U\in\mathcal{M}(\mathbb{R})$, $U$ has closed
convex values, (F) holds, and\begin{equation}
\inf U((t,+\infty))\le\sup U(t),\ \forall t\in[\alpha,\omega)\cap\mathbb{R},\label{liminf}\end{equation}
 \begin{equation}
\sup U((-\infty,t))\ge\inf U(t)\ \forall t\in(\alpha,\omega]\cap\mathbb{R}.\label{limsup}\end{equation}

Here $U(A):=\cup_{x\in A}U(x)$, for $A\subset\mathbb{R}$. In all
these cases $\overline{D(U)}=\operatorname*{cl}(\operatorname*{int}D(U))=[\alpha,\omega]\cap\mathbb{R}$
and (F) is equivalent to $U=U+N_{D(U)}$. \end{theorem}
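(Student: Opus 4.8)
\textbf{Proof plan for Theorem \ref{c-1dim-lim}.}

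The plan is to establish a cycle of implications. The core idea is that for a one-dimensional operator with nonempty interior of domain the notion of maximality collapses to a purely ``interval-monotonicity plus correct boundary behavior'' condition, so all three conditions turn out to be faithful rephrasings of the same geometric picture. I would first record the two easy consequences mentioned in the last sentence: if $U\in\mathscr{M}(\mathbb{R})$ then $\overline{D(U)}=\operatorname*{cl}(\operatorname*{int}D(U))=[\alpha,\omega]\cap\mathbb{R}$ is a (convex) interval and (F) is exactly $U=U+N_{D(U)}$, because $N_{[\alpha,\omega]}$ is $\{0\}$ on the interior, $\mathbb{R}_-$ at $\alpha$, and $\mathbb{R}_+$ at $\omega$ (with obvious one-sided modifications if $\alpha=-\infty$ or $\omega=+\infty$). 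This reduces verifying equivalences among the side conditions to a triviality and lets me use (iv)-type hypotheses and (F) interchangeably.

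Next I would prove (i) $\Rightarrow$ (ii): a maximal monotone operator is automatically closed, monotone, convex-valued, and by the previous paragraph satisfies (F); this direction is essentially citation plus the normal-cone computation. For (ii) $\Rightarrow$ (iii) the main work is to show that the closure condition plus convexity forces the closedness of values (immediate: closed graph with the argument fixed) and, more substantially, the two limit inequalities \eqref{liminf}, \eqref{limsup}. Here I would argue by contradiction: if $\inf U((t,+\infty))>\sup U(t)$ for some $t\in[\alpha,\omega)$, pick a value $s$ strictly between; then $(t,s)$ is monotonically related to the whole graph of $U$ (checking the inequality $\langle t'-t, s'-s\rangle\ge 0$ separately for $t'>t$ and $t'<t$, using monotonicity of $U$ and the strict separation), so by maximality — which we do \emph{not} yet have — ... ; instead, since we only have closedness, I would show directly that $(t,s)$ can be added to the graph contradicting nothing, so I must instead derive the contradiction from closedness: as $t'\downarrow t$, picking $s_{t'}\in U(t')$ one gets $s_{t'}\ge \inf U((t,+\infty))>s\ge\sup U(t)$, and a monotone argument bounds $s_{t'}$ from above as $t'\to t$ (using a fixed point of $(\alpha,\omega)$ to the right), so along a subsequence $s_{t'}\to \ell\ge \inf U((t,+\infty))$, and closedness gives $\ell\in U(t)$, contradicting $\ell>\sup U(t)$. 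The symmetric argument handles \eqref{limsup}.

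Finally, for (iii) $\Rightarrow$ (i) I would take $(t_0,s_0)$ monotonically related to $U$ and show $s_0\in U(t_0)$. First, monotone relatedness against points $t'$ on both sides of any interior point forces $t_0\in\overline{D(U)}=[\alpha,\omega]$; if $t_0\in(\alpha,\omega)$ then \eqref{liminf} and \eqref{limsup} squeeze $s_0$ between $\inf U(t_0)$ and $\sup U(t_0)$ (monotone relatedness gives $s'\le s_0$ for $t'<t_0$ hence $\sup U((-\infty,t_0))\le s_0$, so $\inf U(t_0)\le\sup U((-\infty,t_0))\le s_0$, and dually $s_0\le\inf U((t_0,\infty))\le\sup U(t_0)$), and convexity of $U(t_0)$ plus closedness put $s_0\in U(t_0)$. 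If $t_0=\omega\in D(U)$ (the finite endpoint case), monotone relatedness gives $s'\le s_0$ for all $t'<\omega$, and combined with $\inf U(\omega)\le\sup U((-\infty,\omega))\le s_0$ together with (F) (which says $U(\omega)=U(\omega)+\mathbb{R}_+$, i.e. $U(\omega)=[\inf U(\omega),+\infty)$) we get $s_0\in U(\omega)$; the case $t_0=\alpha$ is symmetric. Adding the missing point contradicts maximality of \emph{any} monotone extension, so no proper monotone extension exists, i.e. $U\in\mathscr{M}(\mathbb{R})$. I expect the main obstacle to be the boundary bookkeeping in (ii) $\Rightarrow$ (iii): carefully extracting a convergent subnet of selected values $s_{t'}$ as $t'$ approaches an endpoint of $(\alpha,\omega)$ from inside, and getting a clean \emph{upper} bound on those values from monotonicity so that closedness can be applied; the finite-versus-infinite endpoint distinctions and the degenerate possibilities ($D(U)$ a half-line or all of $\mathbb{R}$) need to be threaded through without special-casing the whole argument.
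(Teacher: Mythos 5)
Your overall architecture coincides with the paper's: the cycle (i) $\Rightarrow$ (ii) $\Rightarrow$ (iii) $\Rightarrow$ (i), the normal-cone computation identifying (F) with $U=U+N_{D(U)}$, the extraction of a bounded selection $s_{t'}\in U(t')$ as $t'\downarrow t$ together with closedness to prove (\ref{liminf})--(\ref{limsup}), and the squeeze $\inf U(t_{0})\le\sup U((-\infty,t_{0}))\le s_{0}\le\inf U((t_{0},\infty))\le\sup U(t_{0})$ for interior $t_{0}$ are all exactly what the published proof does.

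There is, however, a genuine gap at the opening of (iii) $\Rightarrow$ (i). You assert that ``monotone relatedness against points $t'$ on both sides of any interior point forces $t_{0}\in\overline{D(U)}=[\alpha,\omega]$.'' This is false for a monotone $U$ satisfying only the standing hypothesis $\operatorname*{int}D(U)=(\alpha,\omega)$: take $U(t):=\{0\}$ for $t\in(0,1)$ and $D(U)=(0,1)$; then $(2,1)$ is monotonically related to $U$ although $2\notin[0,1]$. When $t_{0}>\omega$ there are no domain points to the right of $t_{0}$ to test against, so testing only yields $s_{0}\ge\sup U((-\infty,t_{0}))$, and the contradiction must come from first proving that a \emph{finite} right endpoint forces $\sup R(U)=+\infty$ --- via (F) when $\omega\in D(U)$ (since $U(\omega)=U(\omega)+\mathbb{R}_{+}$ is unbounded above), and via (\ref{limsup}) at $t=\omega$ with the convention $\inf\emptyset=+\infty$ when $\omega\notin D(U)$. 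This preliminary step, which the paper carries out before the case analysis, is also what disposes of the case $t_{0}=\omega\notin D(U)$ (and symmetrically $t_{0}=\alpha\notin D(U)$), a case your analysis omits entirely --- you only treat $t_{0}\in(\alpha,\omega)$ and a finite endpoint \emph{belonging} to $D(U)$ --- and it is what justifies the identity $\overline{D(U)}=[\alpha,\omega]\cap\mathbb{R}$ under hypothesis (iii), which you use at the outset but never derive there. Everything else in the proposal is sound.
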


\begin{remark} Note that condition $U(\alpha)=U(\alpha)+\mathbb{R}_{-}$
whenever $\alpha\in D(U)$ comes to $U(\alpha)=(-\infty,\sup U(\alpha)]$
while $U(\omega)=U(\omega)+\mathbb{R}_{+}$ whenever $\omega\in D(U)$
is equivalent to $U(\omega)=[\inf U(\omega),+\infty)$. Therefore
in (ii) (respectively (iii)) it suffices for $U$ to have (closed)
convex values only on $ $$\operatorname*{int}D(U)$. 

For $U\in\mathcal{M}(\mathbb{R})$ relations (\ref{liminf}), (\ref{limsup})
represent a simplification of the closedness condition for $U$ and
can be equivalently restated as equalities, namely \[
\lim_{r\downarrow t}(\inf U(r))=\inf_{r>t}(\inf U(r))=\inf U((t,+\infty))=\sup U(t),\ \forall t\in[\alpha,\omega)\cap\mathbb{R},\]
\[
\lim_{\ell\uparrow t}(\sup U(\ell))=\sup_{\ell<t}(\sup U(\ell))=\sup U((-\infty,t))=\inf U(t),\ \forall t\in(\alpha,\omega]\cap\mathbb{R},\]
 since $t\rightarrow\inf U(t)$, $t\rightarrow\sup U(t)$ are non-decreasing.

More interestingly, (\ref{liminf}) is equivalent to\[
\lim_{r\downarrow t}(\sup U(r))=\inf_{r>t}(\sup U(r))=\sup U(t),\ \forall t\in[\alpha,\omega)\cap\mathbb{R},\]
while \[
\lim_{\ell\uparrow t}(\inf U(\ell))=\sup_{\ell<t}(\inf U(\ell))=\inf U(t),\ \forall t\in(\alpha,\omega]\cap\mathbb{R},\]
is an equivalent reformulation of (\ref{limsup}). 

Also, notice that (\ref{liminf}) for $t=\alpha$ spells $\inf R(U)=-\infty$
when $\alpha\not\in D(U)$ or $\sup U(\alpha)=\inf U((\alpha,+\infty))$
when $\alpha\in D(U)$. Similarly (\ref{limsup}) for $t=\omega$
spells $\sup R(U)=+\infty$ when $\omega\not\in D(U)$ or $\inf U(\omega)=\sup U((-\infty,\omega))$
when $\omega\in D(U)$.\end{remark} 

\begin{proof} (i) $\Rightarrow$ (ii) Condition $U=U+N_{D(U)}$ together
with $N_{(\alpha,\omega)}(\alpha)=\mathbb{R}_{-}$, $N_{(\alpha,\omega)}(\omega)=\mathbb{R}_{+}$(whenever
$\alpha,\omega$ are finite) provide (F). The other properties in
(ii) are usual for $U\in\mathscr{M}(\mathbb{R})$.  

(ii) $\Rightarrow$ (iii) It suffices to verify (\ref{liminf}), (\ref{limsup}).
Note that $Ux$ is bounded, for every $x\in\operatorname*{int}D(U)$,
whenever $U\in\mathcal{M}(\mathbb{R})$. Clearly, (\ref{liminf})
holds when $\inf U((t,\infty))=-\infty$. Otherwise, for every $t\in[\alpha,\omega)\cap\mathbb{R}$,
$\inf U((t,\infty))$ is finite. Take $(r_{n})_{n}\subset(\alpha,\omega)$,
$r_{n}\downarrow t$, $s_{n}:=\min U(r_{n})$ (attained since $U(r_{n})$
is a closed bounded interval) such that $\inf U((t,\infty))=\lim_{n\rightarrow\infty}s_{n}=:s\in\mathbb{R}$.
Since $U$ is closed, $s\in U(t)$ and so $s\le\sup U(t)$, that is,
(\ref{liminf}) holds. Note parenthetically that (\ref{liminf}) holds
for $t=\omega\in D(U)$ since $U(\omega)=U(\omega)+\mathbb{R}_{+}$
and $\sup U(\omega)=+\infty$. Relation (\ref{limsup}) is verified
similarly.

(iii) $\Rightarrow$ (i) First we show that $\omega<\infty$ implies
$\sup R(U)=+\infty$. Indeed, assume that $\omega$ is finite. If
$\omega\in D(U)$ from $U(\omega)=U(\omega)+\mathbb{R}_{+}$ it is
clear that $\sup R(U)=+\infty$. If $\omega\not\in D(U)$ then, according
to (\ref{limsup}), $\sup U((-\infty,\omega))=\sup R(U)\ge\inf U(\omega)=\inf\emptyset=+\infty$.
Similarly $\alpha>-\infty$ implies $\inf R(U)=-\infty$.

Let $(t,s)$ be m.r. to $U$. If $t>\omega$ or $t=\omega\not\in D(U)$
then $\omega$ is finite, and by the previous argument we find the
contradiction $s\ge\sup R(U)=+\infty$. Therefore $t<\omega$ or $t=\omega\in D(U)$.
Similarly, $t<\alpha$ or $t=\alpha\not\in D(U)$ is impossible which
leads to $t>\alpha$ or $t=\alpha\in D(U)$. Hence $t\in D(U)\cap[\alpha,\omega]$.
In particular, since $U\in\mathcal{M}(\mathbb{R})$, we also get that
$\overline{D(U)}=[\alpha,\omega]\cap\mathbb{R}$. 

If $t\in(\alpha,\omega)$ then, due to (\ref{liminf}), (\ref{limsup}),
we get $\inf U(t)\le\sup U((-\infty,t))\le s\le\inf U((t,\infty))\le\sup U(t)$.
This yields $s\in U(t)$, since $U(t)$ is a closed interval. If $t=\alpha\in D(U)$
then, according to (\ref{liminf}), $s\le\inf U((\alpha,\infty))\le\sup U(\alpha)$;
whence $s\in U(\alpha)$, since $U(\alpha)=(-\infty,\sup U(\alpha)]$.
Similarly, $t=\omega\in D(U)$ provides $s\in U(\omega)$. The proof
is complete. \end{proof}

\strut

\begin{proof}[Proof of Theorem 19] If $T\in\mathscr{M}(\mathbb{R}^{d})$
then (iii), (iv), (v) are usual properties mainly due to the preservation
of monotonicity. Since every finite-dimensional convex set has a non-empty
convex relative (algebraic) interior which is dense in the set (see
e.g. \cite[Proposition\ 2.40,\ p.\ 64]{MR1491362}), for (ii) notice
that $\overline{D(T)}$ is convex and $\emptyset\neq C:=\operatorname*{ri}\overline{D(T)}\subset D(T)\subset\overline{C}=\overline{D(T)}$
(see also \cite[Theorems 12.37, 12.41]{MR1491362}). 

For the converse note first that (ii) provide $^{i}D(T)={}^{i}C=\operatorname*{ri}C=\operatorname*{ri}D(T)$,
$\overline{D(T)}=\overline{C}$ are non-empty convex and $T$ has
closed convex values due to (iii) and (v).

Assume first that $\operatorname*{aff}D(T)=\mathbb{R}^{d}$. Hence
$^{i}D(T)=\operatorname*{int}D(T)$. To conclude it suffices to check
condition (i) in Theorem \ref{c-1dim-int-fin dim}, that is, $T+N_{L}\in\mathscr{M}(\mathbb{R}^{d})$
for every line $L\subset\mathbb{R}^{d}$ such that $L\cap\operatorname*{int}D(T)\neq\emptyset$.
According to Lemma \ref{red-fin dim} (ii), we need to show that $\mathscr{T}_{L,z}\in\mathscr{M}(\mathbb{R})$
for some $z\in\operatorname*{int}D(T)\cap L$. To this end we prove
that $U:=\mathscr{T}_{L,z}$ verifies the conditions of Theorem \ref{c-1dim-lim}
(ii).

Recall that $U=J^{*}T_{z}J$, where $L=L(z,v)$, $Jt=tv$, $t\in\mathbb{R}$
and $D(U)=J^{-1}((D(T)-z)\cap\mathbb{R}v)$. Therefore $U$ has convex
values since so does $T$. Because $z\in\operatorname*{int}C$ we
know that $\overline{(C-z)\cap\mathbb{R}v}=(\overline{C}-z)\cap\mathbb{R}v$.
This yields that $(D(T)-z)\cap\mathbb{R}v$ is nearly convex and connected.
Since $J:\mathbb{R}\rightarrow\mathbb{R}v$ is an isomorphism, $D(U)$
is a non-degenerate interval (and it is not a singleton because it
contains the non-empty open set $J^{-1}(\operatorname*{int}C-z)$).

Let $(\alpha,\omega):=\operatorname*{int}D(U)$. Then $\alpha<0$,
$\omega>0$ and, whenever $\alpha,\omega$ are finite, $z+\alpha v,z+\omega v\in\overline{D(T)}\setminus\operatorname*{int}D(T)$.
Assume that $\alpha\in D(U)$, i.e., $z+\alpha v\in D(T)$. Take $n^{*}\in N_{D(T)}(z+\alpha v)$
such that $\langle z+\alpha v-y,n^{*}\rangle>0$, for every $y\in\operatorname*{int}D(T)$.
Pick $y=z+\tfrac{\alpha}{2}v$ to get $\langle v,n^{*}\rangle<0$.
For every $s\in U(\alpha)$ there is $x^{*}\in T(z+\alpha v)$ such
that $s=\langle v,x^{*}\rangle$. For every $\lambda<0$ let $t=\lambda/\langle v,n^{*}\rangle>0$.
Then $x^{*}+tn^{*}\in T(z+\alpha v)$ due to (iv). Therefore $s+\lambda\in U(\alpha)$
and so $U(\alpha)=U(\alpha)+\mathbb{R}_{-}$ . The second part of
(F) is proved similarly.

Consider $s_{n}\in Ut_{n}$, i.e., $s_{n}=\langle v,x_{n}^{*}\rangle$
for some $x_{n}^{*}\in T(z+t_{n}v)$, $n\ge1$, with $\lim_{n\rightarrow\infty}(s_{n},t_{n})=(s,t)$.
From the local boundedness of $T$ at $z$ (see e.g. \cite[Theorem\ 2]{MR995141}),
there are $M,r>0$ such that $z+rB\subset D(T)$ and $\|x^{*}\|\le M$
for every $x^{*}\in T(z+ru)$, $u\in B:=\{x\in\mathbb{R}^{d}\mid\|x\|\le1\}$.
The monotonicity of $T$ provides $\langle t_{n}v-ru,x_{n}^{*}-x^{*}\rangle\ge0$,
for every $x^{*}\in T(z+ru)$, $u\in B$, $n\ge1$. This yields $r\|x_{n}^{*}\|\le M(|t_{n}|\|v\|+r)+t_{n}s_{n}$,
$n\ge1$; whence $(x_{n}^{*})_{n}$ is bounded. On a subsequence,
denoted for simplicity by the same index, $x_{n}^{*}\rightarrow x_{0}^{*}$
in $\mathbb{R}^{d}$ and $x_{0}^{*}\in T(z+tv)$ since $T$ is closed.
Passing to limit in $s_{n}=\langle v,x_{n}^{*}\rangle$ we find $s=\langle v,x_{0}^{*}\rangle$,
that is, $s\in Ut$ and so $U$ is closed. 

According to Theorem \ref{c-1dim-lim} (ii), $\mathscr{T}_{L,z}\in\mathscr{M}(\mathbb{R})$
and by the above argument $T\in\mathscr{M}(\mathbb{R}^{d})$.

In the general case we may assume that $0\in F:=\operatorname*{aff}D(T)$.
Then $T_{F}$ fulfills trivially (i), (ii), (iii), (iv) is due to
$(N_{D(T)})_{F}$ being the normal cone to $D(T)\subset F$, and (v)
follows from Corollary \ref{swr} (ii). Thus $T_{F}\in\mathscr{M}(F)$
followed by $T\in\mathscr{M}(\mathbb{R}^{d})$ (recall that (iv) yields
$T=T+N_{F}$). \end{proof}

\section{Line-plane characterizations}

Since the previous section provided several characterizations for
the maximal monotonicity of operators defined in $\mathbb{R}$, the
Banach spaces considered in this section are assumed to have dimension
greater than one.

\begin{proposition} \label{lp}Let $X$ be a Banach space and let
$T\in\mathcal{M}(X)$ be such that $^{ic}D(T)\neq\emptyset$. If $T+N_{P}\in\mathscr{M}(X)$
for every plane $P$ with $P\cap{}^{ic}D(T)\neq\emptyset$ then $T+N_{L}\in\mathscr{M}(X)$
for every line $L$ with $L\cap{}^{ic}D(T)\neq\emptyset$. \end{proposition}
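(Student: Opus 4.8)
The plan is to reduce the line statement to the plane hypothesis by embedding a given line $L$ into a suitable plane $P$ that still meets $^{ic}D(T)$, and then transferring maximality from $T+N_P$ down to $T+N_L$ via the affine-restriction machinery of Section 3. More precisely, fix a line $L=L(z;v)$ with $z\in L\cap{}^{ic}D(T)$ and $v\neq0$. Since $\dim X\geq2$, pick any $w\in X$ with $\{v,w\}$ linearly independent and let $P:=P(z;v,w)$ be the plane through $z$ spanned by $v,w$; then $z\in P\cap{}^{ic}D(T)$, so by hypothesis $T+N_P\in\mathscr{M}(X)$. Note $L\subset P$, and if $G$ is the two-dimensional linear subspace parallel to $P$ while $F=\mathbb{R}v$ is the line parallel to $L$, then $F\subset G$ and $N_L(x)=N_F(x)$ inside $G$ restricted appropriately.

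The key step is to run the reduction twice. First apply Lemma~\ref{afin} (or Lemma~\ref{red-fin dim}(ii)) to $T+N_P\in\mathscr{M}(X)$: since $P$ is finite-dimensional (hence $\operatorname*{aff}(D(T+N_P))$ closed) we obtain $(T+N_P)_{P,z}=(T+N_P)_z$ restricted to $G$, equivalently $\mathscr{T}^{P}:=\mathscr{T}_{P,z}\in\mathscr{M}(\mathbb{R}^2)$, where $\mathscr{T}_{P,z}$ is the two-dimensional operator associated to $T_{P,z}$. Now inside $\mathbb{R}^2$ we want to restrict this maximal monotone operator to the coordinate line corresponding to $v$. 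For that we again invoke Proposition~\ref{afin-fin dim} in $\mathbb{R}^2$: a line $\ell\subset\mathbb{R}^2$ through $0$ (the image of $L$ under the coordinatizing isomorphism $\mathscr{I}:\mathbb{R}^2\to G$) meets $^{ic}D(\mathscr{T}_{P,z})={}^{ic}(J^{-1}(D(T)-z))$ because $z\in{}^{ic}D(T)$, so $\mathscr{T}_{P,z}+N_\ell\in\mathscr{M}(\mathbb{R}^2)$, and then Lemma~\ref{red-fin dim}(ii) applied with the line $\ell$ inside $\mathbb{R}^2$ gives that the associated one-dimensional operator is maximal monotone. Reassembling: this one-dimensional operator is exactly $\mathscr{T}_{L,z}$, so $\mathscr{T}_{L,z}\in\mathscr{M}(\mathbb{R})$, and Lemma~\ref{red-fin dim}(ii) in the reverse direction yields $T+N_L\in\mathscr{M}(X)$.

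A slightly cleaner packaging, which I would prefer to write, avoids passing all the way to $\mathbb{R}^d$: view $T+N_P\in\mathscr{M}(X)$ and note $N_L = N_P + N_{L}'$ where $N_L' := N_{L\,\text{inside}\,P}$ is the normal cone of the line $L$ taken as a subset of the plane $P$; since $L$ is a (closed, finite-dimensional) affine subset of $X$ meeting $\operatorname*{core}D(T+N_P) \supset P\cap{}^{ic}D(T) \ni z$ — here one uses that relative to $\operatorname*{aff}D(T+N_P)=P$ the point $z$ is an algebraic interior point — Proposition~\ref{afin-int} (applied in the Banach space $X$, with the closed affine set $L$) gives directly $(T+N_P)+N_L = T + N_L \in\mathscr{M}(X)$, using $N_P + N_L = N_L$ because $L\subset P$ forces $N_P(x)\subset N_L(x)$. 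This is the route I would actually carry out.

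The main obstacle is the bookkeeping around algebraic interiors: one must be careful that ``$L$ meets $^{ic}D(T)$'' genuinely upgrades, after adding $N_P$, to ``$L$ meets $\operatorname*{core}D(T+N_P)$'' \emph{relative to the correct affine hull} — i.e. that $z$ is a core point of $D(T+N_P)=D(T)\cap P$ within $\operatorname*{aff}(D(T)\cap P)$, which holds precisely because $P$ is finite-dimensional and $z\in{}^{ic}D(T)$ — and that the normal-cone identity $N_P + N_L = N_L$ is valid. Both are routine once the dimensions are tracked, but they are where an incorrect argument would slip. Everything else is a direct application of Proposition~\ref{afin-int} (or, in the alternative route, of Proposition~\ref{afin-fin dim} together with Lemma~\ref{red-fin dim}), so no genuinely new idea beyond ``fatten the line to a plane'' is required.
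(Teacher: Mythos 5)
Your preferred (``cleaner packaging'') route is essentially the paper's own proof: embed $L$ in a plane $P$, obtain $T+N_{P}\in\mathscr{M}(X)$ from the hypothesis, then add $N_{L}$ to the maximal monotone operator $T+N_{P}$ and use $(T+N_{P})+N_{L}=T+N_{L}$ (valid since $N_{P}(x)\subset N_{L}(x)$ for $x\in L\subset P$). The one slip is the citation: Proposition \ref{afin-int} requires $L\cap\operatorname*{core}D(T+N_{P})\neq\emptyset$, and $\operatorname*{core}\bigl(D(T)\cap P\bigr)=\emptyset$ whenever $\dim X>2$, because $D(T)\cap P$ sits inside a two-dimensional plane; so as literally written that application fails. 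The correct tool is Proposition \ref{afin-fin dim}, whose hypothesis --- $L$ a finite-dimensional affine set with $L\cap{}^{ic}\bigl(D(T)\cap P\bigr)\neq\emptyset$ --- is exactly the ``relative algebraic interior'' condition you verify in your parenthetical (it holds because $z\in{}^{ic}D(T)$, $z\in L\subset P$, and $\operatorname*{aff}(D(T)\cap P)$ is closed, being finite-dimensional). Note also that your claim $\operatorname*{aff}D(T+N_{P})=P$ need not hold (e.g.\ when $D(T)$ is a line contained in $P$), but it is not needed. With Proposition \ref{afin-fin dim} substituted for Proposition \ref{afin-int}, your argument is the paper's verbatim; your first, coordinatized route through $\mathbb{R}^{2}$ via Lemma \ref{red-fin dim} also works but is an unnecessary detour.
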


\begin{proof} Let $L$ be a line with with $L\cap{}^{ic}D(T)\neq\emptyset$
and $P$ be a plane with $L\subset P$. Since $L\cap{}^{ic}(D(T)\cap P)\neq\emptyset$,
by Proposition \ref{afin-fin dim}, we find that $T+N_{L}=(T+N_{P})+N_{L}\in\mathscr{M}(X)$.
\end{proof}

\begin{theorem} \label{c2-ic}Let $X$ be a Banach space and let
$T\in\mathcal{M}(X)$ be such that $^{ic}D(T)\neq\emptyset$. Then
$T\in\mathscr{M}(X)$ iff

\emph{(i)} $T+N_{P}\in\mathscr{M}(X)$ for every plane $P\subset X$
such that $P\cap{}^{ic}D(T)\neq\emptyset$,

\emph{(ii)} $T$ has $w^{*}-$closed convex values.

\end{theorem}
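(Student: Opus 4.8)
The plan is to prove only the nontrivial implication ($\Leftarrow$); the direct one is immediate, since a plane is a finite-dimensional affine set so (i) is Proposition \ref{afin-fin dim}, while (ii) merely records that maximal monotone operators have $w^{*}$-closed convex values. So assume (i), (ii) and let $(x_{0},x_{0}^{*})\in X\times X^{*}$ be monotonically related to $T$; the aim is to show $(x_{0},x_{0}^{*})\in T$, which gives $T\in\mathscr{M}(X)$.

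First I would establish $x_{0}\in D(T)$. Fix $w_{0}\in{}^{ic}D(T)$. If $x_{0}=w_{0}$ this is clear; otherwise the line $L:=L(x_{0};w_{0}-x_{0})$ meets ${}^{ic}D(T)$ at $w_{0}$, so by Proposition \ref{lp} (applied to (i)) we have $T+N_{L}\in\mathscr{M}(X)$. Since $x_{0}-x$ lies in the direction of $L$ and $N_{L}(x)$ is orthogonal to it for every $x\in L$, the pair $(x_{0},x_{0}^{*})$ is also m.r. to $T+N_{L}$, hence $(x_{0},x_{0}^{*})\in T+N_{L}$ and in particular $x_{0}\in D(T)$, so $Tx_{0}\neq\emptyset$. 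Now suppose, for contradiction, $x_{0}^{*}\notin Tx_{0}$. As $Tx_{0}$ is non-empty $w^{*}$-closed convex, the separation theorem in $(X^{*},w^{*})$ (whose dual is $X$) furnishes $y_{0}\in X\setminus\{0\}$ with $\langle y_{0},x_{0}^{*}\rangle>\sigma_{Tx_{0}}(y_{0})$.

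The core step is to reduce this to the two-dimensional finite-dimensional context. Choose a plane $P:=A(x_{0};v_{1},v_{2})$ with $v_{1}$ a nonzero multiple of $y_{0}$, $\{v_{1},v_{2}\}$ linearly independent, and $P\cap{}^{ic}D(T)\neq\emptyset$: this is possible by taking $v_{2}:=w-x_{0}$ for a suitable $w\in{}^{ic}D(T)$, and, in the degenerate case where ${}^{ic}D(T)$ lies on $L(x_{0};y_{0})$ (i.e. $\operatorname*{aff}D(T)$ is a line parallel to $y_{0}$), by letting $v_{2}$ be any direction completing $v_{1}$ to a basis of a $2$-plane containing $\operatorname*{aff}D(T)$, which is legitimate since $\dim X\ge 2$. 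By (i), $T+N_{P}\in\mathscr{M}(X)$, so Lemma \ref{red-fin dim}(ii) gives $\mathscr{T}_{P,x_{0}}\in\mathscr{M}(\mathbb{R}^{2})$. Writing $J\colon\mathbb{R}^{2}\to X$, $J(t_{1},t_{2})=t_{1}v_{1}+t_{2}v_{2}$ and recalling $\mathscr{T}_{P,x_{0}}=J^{*}T_{x_{0}}J$, I would verify that $(0,J^{*}x_{0}^{*})$ is m.r. to $\mathscr{T}_{P,x_{0}}$: for $(\hat t,\hat s)\in\mathscr{T}_{P,x_{0}}$, say $\hat s=J^{*}x^{*}$ with $x^{*}\in T(x_{0}+J\hat t)$, one has $\langle 0-\hat t,\,J^{*}x_{0}^{*}-J^{*}x^{*}\rangle_{\mathbb{R}^{2}}=\langle (x_{0}+J\hat t)-x_{0},\,x^{*}-x_{0}^{*}\rangle\ge 0$ by monotone relatedness of $(x_{0},x_{0}^{*})$ to $T$. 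By maximality of $\mathscr{T}_{P,x_{0}}$ we get $(0,J^{*}x_{0}^{*})\in\mathscr{T}_{P,x_{0}}$, i.e. there is $x_{1}^{*}\in Tx_{0}$ with $J^{*}x_{1}^{*}=J^{*}x_{0}^{*}$; hence $\langle y_{0},x_{1}^{*}\rangle=\langle y_{0},x_{0}^{*}\rangle>\sigma_{Tx_{0}}(y_{0})\ge\langle y_{0},x_{1}^{*}\rangle$, a contradiction. Therefore $x_{0}^{*}\in Tx_{0}$.

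I expect the only delicate point to be the bookkeeping in selecting $P$: one must guarantee simultaneously that $P$ is genuinely two-dimensional, that $y_{0}$ is one of its directions, and that it still meets ${}^{ic}D(T)$, covering uniformly the cases $\operatorname*{aff}D(T)=X$, $\operatorname*{aff}D(T)$ a proper closed subspace, and $\operatorname*{aff}D(T)$ a line (the singleton-domain case being excluded by convention). Everything else — the monotone-relatedness computations and the appeals to Propositions \ref{afin-fin dim}, \ref{lp} and Lemma \ref{red-fin dim} — is routine. Conceptually, reducing to a $2$-plane rather than a line is exactly what makes the argument work in a general Banach space: it is the simultaneous matching of $x_{0}^{*}$ with some $x_{1}^{*}\in Tx_{0}$ along \emph{both} directions of $P$ (available because $\mathscr{T}_{P,x_{0}}$ is maximal monotone) that replaces the tangent-cone/interior argument used in the finite-dimensional Theorem \ref{c-1dim-int-fin dim}, which does not survive the passage to infinite dimensions.
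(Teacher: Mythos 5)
Your proposal is correct and follows essentially the same route as the paper's proof: separate $x_{0}^{*}$ from the $w^{*}$-closed convex set $Tx_{0}$ by some $y_{0}$, pass to a plane through $x_{0}$ containing the direction $y_{0}$ and meeting $^{ic}D(T)$, and use maximality of the restriction (the paper phrases this via $(x_{0},x_{0}^{*})$ being m.r.\ to $T+N_{P}\in\mathscr{M}(X)$, you via $\mathscr{T}_{P,x_{0}}\in\mathscr{M}(\mathbb{R}^{2})$ through Lemma \ref{red-fin dim}, which is the same matching of $x_{0}^{*}$ along both directions of $P$). Your extra care in choosing $v_{2}$ and in first securing $x_{0}\in D(T)$ via a line is sound and only makes explicit what the paper leaves implicit.
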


Due to Proposition \ref{lp} condition (i) in the previous theorem
can be restated as $T+N_{A}$ is maximal monotone, for every affine
set $A$ generated by at most two linearly independent vectors and
such that $A\cap{}^{ic}D(T)\neq\emptyset$. Note that this latter
condition does not require a dimensional restriction on $X$.

\strut

\begin{proof} For the direct implication (i) follows from Proposition
\ref{afin-fin dim} and (ii) is usual for $T\in\mathscr{M}(X)$. 

For the converse let $(x_{0},x_{0}^{*})$ be m.r. to $T$ and let
$A=A(x_{0};v_{1},v_{2})$ be any affine set through $x_{0}$ generated
by $v_{1},v_{2}$ that cuts $^{ic}D(T)$. It is easily checked that
$(x_{0},x_{0}^{*})$ is m.r. to $T+N_{A}$, since $n^{*}\in N_{A}(x)$
iff $x\in A$ and $\langle v_{1},n^{*}\rangle=\langle v_{2},n^{*}\rangle=0$.
But $T+N_{A}\in\mathscr{M}(X)$.

Therefore, if $(x_{0},x_{0}^{*})$ is m.r. to $T$ then $x_{0}\in D(T)$
and\begin{equation}
\forall v_{1},v_{2}:A(x_{0};v_{1},v_{2})\cap{}^{ic}D(T)\neq\emptyset,\exists x^{*}\in Tx_{0}:\langle v_{i},x_{0}^{*}\rangle=\langle v_{i},x^{*}\rangle,i=1,2.\label{mrt-plan}\end{equation}

Assume that $x_{0}^{*}\not\in Tx_{0}$. Since $Tx_{0}^{*}$ is $w^{*}-$closed
convex, by a separation argument there is $v_{1}\in X$ such that\begin{equation}
\langle v_{1},x_{0}^{*}\rangle>\sup\{\langle v_{1},y^{*}\rangle\mid\ y^{*}\in Tx_{0}\}.\label{separare}\end{equation}

Let $v_{2}$ be such that $A:=A(x_{0};v_{1},v_{2})$ cuts $^{ic}D(T)$
and so $T+N_{A}\in\mathscr{M}(X)$. By (\ref{mrt-plan}), this yields
an $x^{*}\in Tx_{0}$ such that $\langle v_{1},x_{0}^{*}\rangle=\langle v_{1},x^{*}\rangle$
contrary to (\ref{separare}). This contradiction comes from the assumption
that $x_{0}^{*}\not\in Tx_{0}$. Hence $x_{0}^{*}\in Tx_{0}$ and
$T\in\mathscr{M}(X)$. \end{proof}

\begin{remark} After looking at Theorems \ref{c-1dim-ic-fin dim},
\ref{c2-ic} (see also Theorem \ref{c-1dim-ic} below) one wonders
whether condition (iii) in Theorem \ref{c-1dim-ic-fin dim} is necessary;
in other words whether condition (i) in Theorem \ref{c2-ic} could
be relaxed by changing the planes with lines.

Consider $C:=\{(x,y)\in\mathbb{R}^{2}\mid x\le0,y\le0\}$ and $T:D(T):=C\subset\mathbb{R}^{2}\rightrightarrows\mathbb{R}^{2}$,
$T(x,y)=N_{C}(x,y)$, for every $(x,y)\in C\setminus\{(0,0)\}$ and
$T(0,0)=\mathbb{R}_{+}(1,1)$. Then $T\not\in\mathscr{M}(\mathbb{R}^{2})$
since $T\subsetneq N_{C}$ (more precisely $T(0,0)\subsetneq N_{C}(0,0)=\{(x,y)\in\mathbb{R}^{2}\mid x\ge0,y\ge0\}=\mathbb{R}_{+}(1,0)+\mathbb{R}_{+}(0,1)$),
$T$ has closed convex values, and $\operatorname*{int}D(T)\neq\emptyset$. 

Moreover, $T+N_{L}=N_{C}+N_{L}\in\mathscr{M}(\mathbb{R}^{2})$ (see
e.g. Proposition \ref{afin-int}), for every line $L$ that cuts $\operatorname*{int}D(T)$,
that is, condition (i) in Theorem \ref{c-1dim-ic-fin dim} is fulfilled.
Indeed, this is straightforward if $L$ does not pass through the
origin. If $L$ passes through the origin note that $L$ cuts $\operatorname*{int}D(T)$
iff $L$ is non-vertical and has a positive-slope, that is, $L:=\mathbb{R}(1,m)$,
for some $m>0$. Then $N_{L}(0,0)=L^{\perp}=\mathbb{R}(-m,1)$ and
$(T+N_{L})(0,0)=\mathbb{R}_{+}(1,1)+\mathbb{R}(-m,1)=\mathbb{R}_{+}(1,0)+\mathbb{R}_{+}(0,1)+\mathbb{R}(-m,1)=(N_{C}+N_{L})(0,0)$,
since it is easily verifiable that $\mathbb{R}_{+}(1,0)\cup\mathbb{R}_{+}(0,1)\subset\mathbb{R}_{+}(1,1)+\mathbb{R}(-m,1)$.
This suffices in order to have $T+N_{L}=N_{C}+N_{L}$. 

{\begin{pspicture}(0,-3)(4,2) 
\rput(3,1.2){$\mathbb{R}^2$}
\rput(5.5,-.5){\psaxes[linewidth=0.01,labels=none,ticks=none]{->}(0,0)(-3,-2)(3,2)} 
\rput(3.5,-1){\bf{\it \large C}} 
\psline[linecolor=blue,linewidth=0.02]{->}(5.5,-.5)(7,1)
\rput(7.3,1.3){$T(0,0)$} 
\psline[linecolor=blue,linewidth=0.02]{->}(4.5,-.5)(4.5,1)
\rput(4.5,1.3){$T(x,y)$} 
\rput(4.5,-.7){$(x,y)$} 
\psline[linecolor=blue,linewidth=0.02]{->}(5.5,-1.5)(6,-1.5)
\psline[linecolor=magenta,linewidth=0.02]{-}(4.5,-2)(6.5,1)
\rput(4.5,-2.2){$L:\ y=mx$} 
\psline[linecolor=magenta,linewidth=0.02,linestyle=dashed]{-}(4.6,.1)(7.3,-1.7)
\rput(7.9,-2){$L^{\perp}:\ x+my=0$}
\rput(8.3,-.2){$(N_{C}+N_{L})(0,0):\ x+my\ge 0$}
\end{pspicture}  } 

This example shows that condition (iii) in Theorem \ref{c-1dim-ic-fin dim}
is essential and, moreover, it cannot be relaxed to $T=T+N_{\operatorname*{aff}D(T)}$.
However, if the lines in Theorem \ref{c-1dim-ic-fin dim} (i) are
upgraded to planes then (iii) can be relaxed to $T=T+N_{\operatorname*{aff}D(T)}$.\end{remark}

\begin{theorem} \label{c2-ic-sat}Let $X$ be a Banach space and
let $T\in\mathcal{M}(X)$ be such that $^{ic}D(T)\neq\emptyset$.
Then $T\in\mathscr{M}(X)$ iff

\emph{(i)} $T+N_{P}\in\mathscr{M}(X)$ for every plane $P\subset\operatorname*{aff}D(T)$
such that $P\cap{}^{ic}D(T)\neq\emptyset$,

\emph{(ii)} $T$ has $w^{*}-$closed convex values,

\emph{(iii)} $T=T+N_{\operatorname*{aff}D(T)}$\emph{.}

\end{theorem}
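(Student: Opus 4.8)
The plan is to reduce Theorem \ref{c2-ic-sat} to Theorem \ref{c2-ic} by passing to the restriction $T_{F}$, where $F:=\operatorname*{aff}D(T)$, exactly as was done in the proof of Theorem \ref{c-1dim-ic-fin dim} (reduction of the ``$^{ic}$''-characterization to the ``$\operatorname*{int}$''-characterization in finite dimensions). The direct implication is routine: if $T\in\mathscr{M}(X)$ then (i) is Proposition \ref{afin-fin dim}, (ii) is a standard property of maximal monotone operators, and (iii) holds because $\mathscr{M}(X)\ni T\subset T+N_{\operatorname*{aff}D(T)}\in\mathcal{M}(X)$ forces equality. So the work is entirely in the converse.

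For the converse, assume without loss of generality (translating $T$ if necessary) that $0\in F:=\operatorname*{aff}D(T)$; note that $F$ is closed since $^{ic}D(T)=\,^{s-ic}D(T)\neq\emptyset$, which is precisely what makes $\operatorname*{aff}D(T)$ closed. First I would transfer hypotheses (i), (ii), (iii) from $T$ to $T_{F}$. For (i): given a plane $P\subset F$ with $P\cap{}^{ic}D(T)\neq\emptyset$, condition (iii) gives $T+N_{P}+N_{F}=T+N_{P}\in\mathscr{M}(X)$ (using $N_{F}\subset N_{P}$, since $P\subset F$ implies $F^{\perp}\subset P^{\perp}$ — wait, rather $N_{P}$ restricted to points of $P$ already contains $F^{\perp}=N_{F}$), hence by Lemma \ref{sub} $(T+N_{P})_{F}=T_{F}+N_{P}\in\mathscr{M}(F)$, where $N_{P}$ here denotes the normal cone to $P$ taken inside $F$; and $^{ic}D(T_{F})={}^{ic}D(T)$ (relative interiors are unaffected by passing to the affine hull), so $P$ still cuts $^{ic}D(T_{F})$. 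For (iii): it is immediate that $T_{F}=T_{F}+N_{\operatorname*{aff}D(T_{F})}$, since $\operatorname*{aff}D(T_{F})=D(T)$'s affine hull inside $F$ coincides with $F$ itself if $\operatorname*{aff}D(T)=F$, and $(N_{\operatorname*{aff}D(T)})_{F}$ is the corresponding normal cone in $F$ — concretely (iii) for $T$ already yields $T=T+N_{F}$ since $N_{F}\subset N_{\operatorname*{aff}D(T)}=N_{F}$, making the descent of (iii) automatic. For (ii): $T_{F}$ has $w^{*}$-closed convex values by Proposition \ref{val-inchise} (applicable because $F$ is closed and $T=T+N_{F}$) together with the trivial remark on convexity of values under $\iota_{F}^{*}$; here I use that a bounded-affine continuous image argument, or rather the quotient-isomorphism argument of Proposition \ref{val-inchise}, preserves $w^{*}$-closedness precisely because $F$ is closed.

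Having verified that $T_{F}\in\mathcal{M}(F)$ satisfies the three hypotheses of Theorem \ref{c2-ic} relative to the Banach space $F$ (which has $^{ic}D(T_{F})=\operatorname*{int}_{F}D(T_{F})\neq\emptyset$, so the ``$^{ic}$'' version of Theorem \ref{c2-ic} applies verbatim — note no dimensional restriction is needed since the plane condition is stated intrinsically), I conclude $T_{F}\in\mathscr{M}(F)$. Finally, since (iii) gives $T=T+N_{F}$ and $F$ is closed, Lemma \ref{sat} ((ii)$\Rightarrow$(i)) yields $T\in\mathscr{M}(X)$, completing the proof. The main obstacle I anticipate is the bookkeeping around the descent of condition (i): one must be careful that ``$T+N_{P}\in\mathscr{M}(X)$ for $P\subset\operatorname*{aff}D(T)$'' is genuinely enough to get ``$T_{F}+N_{P}^{F}\in\mathscr{M}(F)$ for all planes $P\subset F$ cutting $^{ic}D(T_{F})$,'' which relies on combining (iii) with Lemma \ref{sub} in the form $(T+N_{P})+N_{F}=T+N_{P}$, and on checking that Theorem \ref{c2-ic} is applied to a space $F$ of dimension at least one (the dimension-one case of $F$ is already covered by the finite-dimensional Theorem \ref{c-1dim-ic-fin dim}, but in fact the plane hypothesis degenerates harmlessly there and the argument still goes through, or one invokes Proposition \ref{1-dim-convex}).
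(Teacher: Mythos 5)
Your proposal is correct and follows the paper's own proof essentially verbatim: reduce to $T_{F}$ with $F:=\operatorname*{aff}D(T)$ (closed since $^{ic}D(T)\neq\emptyset$), transfer (i) via the identity $T+N_{P}+N_{F}=T+N_{P}$ together with Lemma \ref{sub}, transfer (ii) via Proposition \ref{val-inchise}, apply Theorem \ref{c2-ic} to $T_{F}$ in $F$, and return to $X$ via (iii) and Lemma \ref{sat}. The only slips are cosmetic: the identity $T+N_{P}+N_{F}=T+N_{P}$ is automatic from $P\subset F$ (the linear subspace parallel to $P$ lies in $F$, so $F^{\perp}$ is contained in each value of $N_{P}$) rather than a consequence of (iii), and the degenerate case $\dim F=1$ is excluded by the standing dimensional assumption announced at the start of Section 5 rather than by Theorem \ref{c-1dim-ic-fin dim}, whose line hypothesis is not implied by a vacuous plane hypothesis.
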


\begin{proof} The direct implication is straightforward. For the
converse assume without loss of generality that $0\in\operatorname*{aff}D(T)=:F$.
Apply the converse of Theorem \ref{c2-ic} for $T_{F}:F\rightrightarrows F^{*}$.
For every plane $P\subset F$ such that $P\cap{}^{ic}D(T)\neq\emptyset$,
$T+N_{P}+N_{F}=T+N_{P}\in\mathscr{M}(X)$. From Lemma \ref{sub} we
see that $(T+N_{P})_{F}=T_{F}+(N_{P})_{F}\in\mathscr{M}(F)$, where
$(N_{P})_{F}\in\mathscr{M}(F)$ is the normal cone to $P\subset F$.
Hence $T_{F}\in\mathscr{M}(F)$ since, by Proposition \ref{val-inchise},
$T_{F}$ has $w^{*}-$closed values in $F^{*}$. Together with (iii)
this yields $T\in\mathscr{M}(X)$ (see Lemma \ref{sat}). \end{proof}

\strut

The natural question whether the planes in condition (i) of Theorems
\ref{c2-ic}, \ref{c2-ic-sat} can be replaced by lines is answered
in the next result; thereby providing an extension to the infinite-dimensional
context for Theorem \ref{c-1dim-ic-fin dim}.

\begin{theorem} \label{c-1dim-ic}Let $X$ be a Banach space and
let $T\in\mathcal{M}(X)$ be such that $^{ic}D(T)\neq\emptyset$.
Then $T\in\mathscr{M}(X)$ iff

\emph{(i)} $T+N_{L}\in\mathscr{M}(X)$ for every line $L\subset\operatorname*{aff}D(T)$
such that $L\cap{}^{ic}D(T)\neq\emptyset$,

\emph{(ii)} $T$ has $w^{*}-$closed convex values,

\emph{(iii)} $T=T+N_{D(T)}$\emph{.}

\end{theorem}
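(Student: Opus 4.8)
The plan is to reduce Theorem \ref{c-1dim-ic} to the finite-dimensional Theorem \ref{c-1dim-ic-fin dim} via the restriction machinery of Sections 3--4, in the same spirit as the proof of Theorem \ref{c2-ic-sat} but now passing all the way down to lines. The direct implication is immediate: if $T\in\mathscr{M}(X)$ then (i) follows from Proposition \ref{afin-fin dim}, while (ii) and (iii) are standard properties of maximal monotone operators. For the converse, assume (i)--(iii) hold. After a translation we may assume $0\in F:=\operatorname*{aff}D(T)$, so it suffices (by Lemma \ref{sat}, using that (iii) gives $T=T+N_F$ since $N_F\subset N_{D(T)}$) to prove $T_F\in\mathscr{M}(F)$. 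Hence from the outset we may as well assume $\operatorname*{aff}D(T)=X$, i.e. $^{ic}D(T)=\operatorname*{core}D(T)$, after checking that conditions (i)--(iii) pass from $T$ to $T_F$: for (i) this is the argument already used in Proposition \ref{1-dim-convex} (for a line $L\subset F$, $T+N_L=(T+N_L)+N_F\in\mathscr{M}(X)$ yields $T_F+N_L\in\mathscr{M}(F)$ by Lemma \ref{sub}); for (ii) use Proposition \ref{val-inchise}; and (iii) passes down because $(N_{D(T)})_F$ is the normal cone to $D(T)$ inside $F$.

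Now with $\operatorname*{core}D(T)\neq\emptyset$, I would invoke Proposition \ref{1-dim-convex}: condition (i) alone already forces $D(T)$ to be nearly-convex with $\operatorname*{ri}D(T)={}^{ic}D(T)=\operatorname*{core}D(T)=\operatorname*{int}D(T)$ and $\overline{D(T)}$ convex, and moreover $T$ is NI with $\mathcal{R}:=[\psi_T=c]$ its unique maximal monotone extension and $D(\mathcal{R})=D(T)$. The task is then exactly to show $T=\mathcal{R}$, equivalently that every $(x_0,x_0^*)$ m.r.\ to $T$ lies in $T$. Given such a pair, $x_0\in D(T)$ and we have the line-wise matching property (\ref{mrt1}): for every $v$ with $L(x_0;v)\cap\operatorname*{int}D(T)\neq\emptyset$ there is $x^*\in Tx_0$ with $\langle v,x_0^*\rangle=\langle v,x^*\rangle$. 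Suppose $x_0^*\notin Tx_0$; since $Tx_0$ is $w^*$-closed convex, by Hahn--Banach separation there is $v_1\in X$ (viewing $X\hookrightarrow X^{**}$) with $\langle v_1,x_0^*\rangle>\sigma_{Tx_0}(v_1)$. The key point is to produce a \emph{line} through $x_0$ in direction related to $v_1$ that meets $\operatorname{int}D(T)$, contradicting (\ref{mrt1}). Picking any $w_0\in\operatorname{int}D(T)$, consider the plane $P:=A(x_0;v_1,w_0-x_0)$; this plane cuts $\operatorname{int}D(T)$ (it contains $w_0$). Inside the finite-dimensional operator $\mathscr{T}_{P,x_0}$ I would run the finite-dimensional argument: by Lemma \ref{red-fin dim}, conditions (i)--(iii) descend to $\mathscr{T}_{P,x_0}$ (closed convex values via Proposition \ref{cl-val-graph}(i) and near-convexity of $D(T)$; the normal-cone identity via Lemma \ref{her-cone}), so Theorem \ref{c-1dim-ic-fin dim} gives $\mathscr{T}_{P,x_0}\in\mathscr{M}(\mathbb{R}^2)$, hence $T+N_P\in\mathscr{M}(X)$ by Lemma \ref{red-fin dim}(ii). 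But then $(x_0,x_0^*)$ m.r.\ to $T$ is m.r.\ to $T+N_P$, forcing via (\ref{mrt-plan})-type reasoning an $x^*\in Tx_0$ with $\langle v_1,x_0^*\rangle=\langle v_1,x^*\rangle\le\sigma_{Tx_0}(v_1)$, a contradiction. Thus $x_0^*\in Tx_0$ and $T\in\mathscr{M}(X)$.

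The main obstacle is the bookkeeping in the reduction step: one must verify carefully that \emph{each} hypothesis (i), (ii), (iii)---and not just maximality---is inherited by the two-dimensional model operator $\mathscr{T}_{P,x_0}$, so that Theorem \ref{c-1dim-ic-fin dim} genuinely applies. Condition (iii), $T=T+N_{D(T)}$, is the delicate one: it must be transferred through the restriction to $F$, then through the finite-dimensional affine reduction, where it becomes $\mathscr{T}_{P,x_0}=\mathscr{T}_{P,x_0}+N_{D(\mathscr{T}_{P,x_0})}$; this is precisely what Lemma \ref{her-cone} supplies, using $D(T)$ nearly-convex and the description (\ref{icfd}) of $D(\mathscr{T}_{P,x_0})$ from Lemma \ref{int-cl-fin-dim}, but one must confirm the chain-rule hypothesis $0\in{}^{ic}(A-\overline{D(T)})$ is met for our chosen plane. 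Everything else---the separation producing $v_1$, the choice of the plane through $x_0$ and an interior point, and extracting the contradiction from the line-matching property---is routine once the reduction is set up. $\ $
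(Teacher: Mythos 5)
Your proposal is correct and follows essentially the same route as the paper: reduce to $\operatorname*{aff}D(T)=X$, invoke Proposition \ref{1-dim-convex} for near-convexity of $D(T)$, then pass to the two-dimensional restrictions $\mathscr{T}_{P,\cdot}$ and apply the finite-dimensional theorem, with (ii) descending via Proposition \ref{cl-val-graph}(i) and (iii) via Lemma \ref{her-cone}; the paper merely packages your inline separation argument as a citation of Theorem \ref{c2-ic} and works with a base point $z\in P\cap\operatorname*{int}D(T)$ so that Lemma \ref{her-cone} applies verbatim (your base point $x_{0}$ may be a boundary point, but translating the base point is harmless for maximality). The one step you leave implicit is the descent of hypothesis (i) to the planar operator: one must verify $\mathscr{T}_{P,z}+N_{\ell}=J^{*}(T+N_{L})_{z}J$ for $L=J(\ell)+z$, which requires the chain rule $N_{\ell}=J^{*}N_{L-z}J$ (the paper cites \cite[Theorem 2.8.3(viii)]{MR1921556}) before Lemma \ref{red-fin dim}(ii) can be applied to $A:=T+N_{L}$.
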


\begin{proof} The direct implication is clear. For the converse note
that from (iii), Lemma \ref{sat}, and Proposition \ref{val-inchise}
we may assume without loss of generality that $\operatorname*{aff}D(T)=X$
(otherwise we replace $T$ by $T_{F}$ where $0\in F:=\operatorname*{aff}D(T)$);
whence $^{ic}D(T)=\operatorname*{core}D(T)$. From (i) via Proposition
\ref{1-dim-convex} we know that $\operatorname*{core}D(T)=\operatorname*{int}D(T)$,
$\overline{D(T)}=\operatorname*{cl}(\operatorname*{int}D(T))$ are
non-empty convex; in particular $D(T)$ is nearly-convex. 

According to Theorem \ref{c2-ic}, it is enough to prove that $T+N_{P}\in\mathscr{M}(X)$
for every plane $P$ with $P\cap\operatorname*{int}D(T)\neq\emptyset$.
From Lemma \ref{red-fin dim} (ii) we know that $T+N_{P}\in\mathscr{M}(X)$
iff $\mathscr{T}_{P,z}\in\mathscr{M}(\mathbb{R}^{2})$ for some $z\in P\cap\operatorname*{int}D(T)$.

For this choice of $P$ and $z$ we plan to use Theorem \ref{c-1dim-int-fin dim}
for $\mathscr{T}_{P,z}$.

According to (ii) and Proposition \ref{cl-val-graph} (i), ${\mathscr{T}}_{P,z}$
has closed convex values.

In this case recall (\ref{icfd}), i.e., \begin{equation}
\operatorname*{int}D({\mathscr{T}}_{P,z})=J^{-1}(\operatorname*{int}D(T)-z),\ \operatorname*{cl}D({\mathscr{T}}_{P,z})=J^{-1}(\overline{D(T)}-z),\label{icp}\end{equation}
where $P=A(z;v_{1},v_{2})$, $J:\mathbb{R}^{2}\rightarrow X$, $J(t_{1},t_{2}):=t_{1}v_{1}+t_{2}v_{2}$,
and $D({\mathscr{T}}_{P,z})=J^{-1}(D(T)-z)$.

We know from (iii) and Lemma \ref{her-cone} that ${\mathscr{T}}_{P,z}\hat{t}={\mathscr{T}}_{P,z}\hat{t}+N_{D({\mathscr{T}}_{P,z})}\hat{t}$,
for every $\hat{t}=(t_{1},t_{2})\in D({\mathscr{T}}_{P,z})$. According
to Theorem \ref{c-1dim-int-fin dim}, $\mathscr{T}_{P,z}\in\mathscr{M}(\mathbb{R}^{2})$
as soon as ${\mathscr{T}}_{P,z}+N_{\ell}\in\mathscr{M}(\mathbb{R}^{2})$
for every line $\ell\subset\mathbb{R}^{2}$ such that $\ell\cap\operatorname*{int}D({\mathscr{T}}_{P,z})\neq\emptyset$.

Let $\ell\subset\mathbb{R}^{2}$ be a line such that $\ell\cap\operatorname*{int}D({\mathscr{T}}_{P,z})\neq\emptyset$.
Consider the line $L:=J(\ell)+z\subset P$ or equivalently $\ell=J^{-1}(L-z)$.
Note that $\iota_{\ell}=\iota_{L-z}\circ J$. Since $\ell\cap\operatorname*{int}D({\mathscr{T}}_{P,z})\neq\emptyset$
we know from (\ref{icp}) that $L$ cuts $\operatorname*{int}D(T)$.
We may apply the chain rule \cite[Theorem\ 2.8.3(viii), p. 123]{MR1921556}
to get $N_{\ell}=J^{*}N_{L-z}J$. This yields $\mathscr{T}_{P,z}+N_{\ell}=\mathscr{A}_{P,z}:=J^{*}A_{z}J$,
where $A:=T+N_{L}\in\mathscr{M}(X)$ due to (i). Note that $A=A+N_{P}$.
According to Lemma \ref{red-fin dim} (ii), $\mathscr{A}_{P,z}\in\mathscr{M}(\mathbb{R}^{2})$.
The proof is complete. \end{proof}

\strut

Theorems \ref{c-1dim-ic-fin dim}, \ref{c2-ic}, \ref{c2-ic-sat},
\ref{c-1dim-ic} show that under interiority conditions the sum result
contained in Theorem \ref{mmcs} (\cite[Corollary\ 4]{MR2577332})
cannot be further improved in the sense that $T$ need be maximal
monotone. 

\medskip

The advantage of a condition of type Theorem \ref{c-1dim-ic} (i)
over the (demi)closedness of the graph of an operator is that this
condition involving lines can be replaced by the hemiclosedness or
the so called {}``closedness on line'' condition, as seen in Theorem
\ref{c-1dim-lim}. 

\medskip

Let $C\subset X$ be a convex set with $\operatorname*{ri}D(T)\neq\emptyset$.
Denote by $T_{C}(x)$ the tangent cone to $C$ at $x\in X$ and by
$S_{C}(x):=\bigcup_{h>0}h(C-x)$ the cone spanned by $C-x$. Then
$\operatorname*{ri}T_{C}(x)=S_{\operatorname*{ri}C}(x)$, for very
$x\in\overline{C}$ (see \cite[Proposition\ 7, p. 169]{MR749753}).

\medskip

We are ready for the following generalization of \cite[Theorem\ 1.2]{MR0180884},
\cite[Lemma 2.2]{MR1191009}, \cite[Theorem 40.2, p. 155]{MR1723737}
(see also \cite[Remark 40.3]{MR1723737}), and \cite[Lemma 4.2]{MR2453098}.

\begin{theorem} \label{c-de-he-re}Let $X$ be a Banach space and
let $T\in\mathcal{M}(X)$ be such that $D(T)$ is nearly-convex.

Then $T\in\mathscr{M}(X)$ iff $T=T+N_{D(T)}$ and (one of) the following
assumptions hold(s):

\smallskip

\noindent \emph{(H)} $T$ has $w^{*}-$closed convex values and for
every $x\in\overline{D(T)}$, $v\in\operatorname*{ri}T_{\overline{D(T)}}(x)$\begin{equation}
\inf_{h>0}\inf_{x^{*}\in T(x+hv)}\langle v,x^{*}\rangle\le\sup_{x^{*}\in T(x)}\langle v,x^{*}\rangle.\label{hemi}\end{equation}
 \emph{(D)} $T$ has convex values and is demiclosed\emph{,}

\noindent \emph{(R)} $T$ is representable. \end{theorem}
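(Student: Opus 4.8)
The strategy is to reduce each case to the finite-dimensional characterization already established, namely Theorem \ref{c-1dim-ic}, whose hypotheses are: $T\in\mathcal{M}(X)$, $^{ic}D(T)\neq\emptyset$, $T=T+N_{D(T)}$, $T$ has $w^{*}$-closed convex values, and $T+N_{L}\in\mathscr{M}(X)$ for every line $L\subset\operatorname*{aff}D(T)$ cutting $^{ic}D(T)$. Since $D(T)$ is nearly-convex we already have $^{ic}D(T)=\operatorname*{ri}D(T)\neq\emptyset$, and the hypothesis $T=T+N_{D(T)}$ is common to all three variants, so in each case the work is to verify the $w^{*}$-closed convex values property and the line condition (i) of Theorem \ref{c-1dim-ic}. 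The direct implication is in every case immediate from known properties of maximal monotone operators together with Proposition \ref{afin-fin dim} (which gives $T+N_{L}\in\mathscr{M}(X)$); so only the converse requires argument.

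For case (R): if $T$ is representable then $T$ is monotone demiclosed with $w^{*}$-closed convex values (stated in the Preliminaries), so the values condition is free; and case (R) then follows from case (D). For case (D): $T$ has convex values and is demiclosed, hence in particular the values are $w^{*}$-closed (a demiclosed operator has $w^{*}$-closed values by testing with constant-first-coordinate bounded nets), so again the values condition holds; it remains to verify the line condition. Here is where the reduction to dimension one enters. Fix a line $L\subset\operatorname*{aff}D(T)$ with $L\cap\operatorname*{ri}D(T)\neq\emptyset$; by Lemma \ref{red-fin dim}(ii), $T+N_{L}\in\mathscr{M}(X)$ iff $\mathscr{T}_{L,z}\in\mathscr{M}(\mathbb{R})$ for some $z\in L\cap\operatorname*{ri}D(T)$. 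By Proposition \ref{cl-val-graph}, $\mathscr{T}_{L,z}$ has closed convex values and is closed; by Lemma \ref{int-cl-fin-dim}, $D(\mathscr{T}_{L,z})$ is a non-degenerate interval with the expected interior and closure; and by Lemma \ref{her-cone}, $\mathscr{T}_{L,z}=\mathscr{T}_{L,z}+N_{D(\mathscr{T}_{L,z})}$, which delivers condition (F) of Theorem \ref{c-1dim-lim}. Thus $U:=\mathscr{T}_{L,z}$ satisfies Theorem \ref{c-1dim-lim}(ii), so $U\in\mathscr{M}(\mathbb{R})$, and (D) is done; (R) follows.

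For case (H): the $w^{*}$-closed convex values are assumed outright, so only the line condition is at stake, and again we reduce via Lemma \ref{red-fin dim}(ii) to showing $U:=\mathscr{T}_{L,z}\in\mathscr{M}(\mathbb{R})$ for a line $L\subset\operatorname*{aff}D(T)$ cutting $\operatorname*{ri}D(T)$ and $z\in L\cap\operatorname*{ri}D(T)$. This time we aim at Theorem \ref{c-1dim-lim}(iii) rather than (ii), since we no longer have closedness of $T$. As before, $U$ has closed convex values (Proposition \ref{cl-val-graph}(i)), $D(U)=:(\alpha,\omega)$-closure structure comes from Lemma \ref{int-cl-fin-dim}, and (F) comes from Lemma \ref{her-cone}. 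What remains is to translate the hemicontinuity inequality (\ref{hemi}) into the pair of one-dimensional inequalities (\ref{liminf})--(\ref{limsup}). Writing $L=L(z;v)$ and $Ut=\{\langle v,x^{*}\rangle\mid x^{*}\in T(z+tv)\}$, one checks that $\operatorname*{ri}T_{\overline{D(T)}}(z+tv)$ in the direction $v$ corresponds to the admissible one-sided directions at an endpoint of $D(U)$, so (\ref{hemi}) applied at $x=z+tv$ with the direction $v$ (respectively $-v$) gives exactly $\inf_{h>0}\inf U((t,+\infty))\le\sup U(t)$ (respectively the reversed version). Some care is needed at the endpoints $\alpha,\omega$ and at interior points, handling separately the subcases $z+tv\in\operatorname*{ri}D(T)$ (where both one-sided directions are admissible) and $z+tv\in\overline{D(T)}\setminus\operatorname*{ri}D(T)$ (where only one is); the convention $\inf\emptyset=+\infty$ takes care of the case $t\notin D(U)$. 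Then Theorem \ref{c-1dim-lim}(iii) yields $U\in\mathscr{M}(\mathbb{R})$.

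\textbf{Main obstacle.} The routine-but-delicate point is the bookkeeping in case (H): making precise the correspondence between $\operatorname*{ri}T_{\overline{D(T)}}(x)$ for $x$ on the line $L$ and the one-sided directions at the endpoints of the interval $D(U)=J^{-1}(D(T)-z)$, so that (\ref{hemi}) specializes correctly to both (\ref{liminf}) and (\ref{limsup}), including the degenerate endpoint situations. Everything else is an assembly of Lemma \ref{red-fin dim}, Proposition \ref{cl-val-graph}, Lemma \ref{int-cl-fin-dim}, Lemma \ref{her-cone}, Theorem \ref{c-1dim-lim}, and Theorem \ref{c-1dim-ic}; the only genuinely new content is this dictionary between the infinite-dimensional hemicontinuity hypothesis and its one-dimensional shadow.
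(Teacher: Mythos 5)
Your proposal is correct and follows essentially the same route as the paper: reduce to Theorem \ref{c-1dim-ic} via Lemma \ref{red-fin dim}(ii), feed $U=\mathscr{T}_{L,z}$ into Theorem \ref{c-1dim-lim} (part (ii) for (D), part (iii) for (H)), with Proposition \ref{cl-val-graph}, Lemma \ref{int-cl-fin-dim}, and Lemma \ref{her-cone} supplying the hypotheses, and (R) reduced to (D). The ``dictionary'' you flag as the main obstacle is exactly how the paper finishes, using $\operatorname*{ri}T_{\overline{D(T)}}(x)=S_{\operatorname*{ri}D(T)}(x)$ and the observation that $v\in S_{\operatorname*{ri}D(T)}(z+tv)$ for $\alpha\le t<\omega$ and $-v\in S_{\operatorname*{ri}D(T)}(z+tv)$ for $\alpha<t\le\omega$.
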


\begin{remark} \label{hemi-equiv} Whenever $T\in\mathcal{M}(X)$,
the second part of condition \emph{(H)} in the previous theorem is
equivalent to the stronger forms: for $x\in\overline{D(T)},v\in S_{\operatorname*{ri}D(T)}(x)$

\[
\lim_{h\downarrow0}\inf_{x^{*}\in T(x+hv)}\langle v,x^{*}\rangle=\inf_{h>0}\inf_{x^{*}\in T(x+hv)}\langle v,x^{*}\rangle=\sup_{x^{*}\in T(x)}\langle v,x^{*}\rangle,\]
 \[
\lim_{h\downarrow0}\sup_{x^{*}\in T(x+hv)}\langle v,x^{*}\rangle=\inf_{h>0}\sup_{x^{*}\in T(x+hv)}\langle v,x^{*}\rangle=\sup_{x^{*}\in T(x)}\langle v,x^{*}\rangle,\]
since $h\rightarrow\inf_{x^{*}\in T(x+hv)}\langle v,x^{*}\rangle$,
$h\rightarrow\sup_{x^{*}\in T(x+hv)}\langle v,x^{*}\rangle$ are non-decreasing.
The last equivalent condition holds a striking resemblance to \cite[Lemma 2.2]{MR2465513}
(one of the key results used in the proof of \cite[Theorem 3.4]{MR2465513}). 

\end{remark}

\begin{proof} Let $T\in\mathcal{M}(X)$ be such that $D(T)$ is nearly-convex.
For every $x\in\overline{D(T)}$, $v\in S_{\operatorname*{ri}D(T)}(x)$
the line $L=L(x;v)\subset\operatorname*{aff}D(T)$ cuts $\operatorname*{ri}D(T)$
at some $z=x+t_{0}v$ with $t_{0}>0$. Let $U:={\mathscr{T}}_{L,x}$.
Then $D(U)$ is a non-degenerate interval with $0$ inside its interior
when $x\in\operatorname*{ri}D(T)$ and with $0$ as its left end-point
when $x\in\overline{D(T)}\setminus\operatorname*{ri}D(T)$ (see Lemma
\ref{int-cl-fin-dim}). Recall that $s\in U(t)$ iff $s=\langle v,x^{*}\rangle$,
for some $x^{*}\in T(x+tv)$; whence $\inf U(t)=\inf_{x^{*}\in T(x+tv)}\langle v,x^{*}\rangle$,
$\sup U(t)=\sup_{x^{*}\in T(x+tv)}\langle v,x^{*}\rangle$, and $\sup U(0)=\sup_{x^{*}\in T(x)}\langle v,x^{*}\rangle$.

It is clear that for $T\in\mathscr{M}(X)$ conditions $T=T+N_{D(T)}$,
(D), and (R) are fulfilled. According to Proposition \ref{afin-fin dim},
$T+N_{L}\in\mathscr{M}(X)$ and from Lemma \ref{red-fin dim} (ii),
$\mathscr{T}_{L,x}\in\mathscr{M}(\mathbb{R})$. Condition (H) follows
from relation (\ref{liminf}) in Theorem \ref{c-1dim-lim} (iii) for
$t=0$. 

Conversely, note that (R) $\Rightarrow$ (D) and $T$ has $w^{*}-$closed
convex values whenever (H) or (D) holds. According to Theorem \ref{c-1dim-ic},
it suffices to show that $T+N_{L}\in\mathscr{M}(X)$ for ever line
$L\subset\operatorname*{aff}D(T)$ with $L\cap\operatorname*{ri}D(T)\neq\emptyset$.
To fix notation, let $L=L(z;v)$ with $z\in\operatorname*{ri}D(T)$.
According to Lemma \ref{red-fin dim} (ii) we need to show that $U:={\mathscr{T}}_{L,z}\in\mathscr{M}(\mathbb{R})$.
To this end we use Theorem \ref{c-1dim-lim}. Both (H), (D) imply
via Proposition \ref{cl-val-graph} (i) that $U$ has closed convex
values. Also, $\operatorname*{int}D(U)=:(\alpha,\omega)\ni0$, because
$z\in\operatorname*{ri}D(T)$. Condition $T=T+N_{D(T)}$ transfers
to $U$ via Lemma \ref{her-cone} and it clearly yields that $U(\alpha)=U(\alpha)+\mathbb{R}_{-}$,
whenever $\alpha\in D(U)$ and $U(\omega)=U(\omega)+\mathbb{R}_{+}$,
whenever $\omega\in D(U)$, since $N_{[\alpha,\omega]}(\alpha)=\mathbb{R}_{-}$,
$N_{[\alpha,\omega]}(\omega)=\mathbb{R}_{+}$, that is, (F) holds.

If (D) is true then, according to Proposition \ref{cl-val-graph}
(ii), $U$ is closed. In this case the conditions in Theorem \ref{c-1dim-lim}
(ii) are met so $U\in\mathscr{M}(\mathbb{R})$ and we are done.

If (H) holds, to conclude by using Theorem \ref{c-1dim-lim} (iii),
it remains to prove that (\ref{liminf}), (\ref{limsup}) hold. 

Note that $v\in S_{\operatorname*{ri}D(T)}(z+tv)$ for $\alpha\le t<\omega$
and $-v\in S_{\operatorname*{ri}D(T)}(z+tv)$ for $\alpha<t\le\omega$;
while $z+tv\in\overline{D(T)}$, for every $t\in[\alpha,\omega]\cap\mathbb{R}$.
According to (\ref{hemi}), for every $\alpha\le t<\omega$ \[
\inf U((t,+\infty))=\inf_{h>0}\inf_{x^{*}\in T(z+tv+hv)}\langle v,x^{*}\rangle\le\sup_{x^{*}\in T(z+tv)}\langle v,x^{*}\rangle=\sup U(t).\]
 and for every $\alpha<t\le\omega$

\vspace{-.6cm}

\begin{align*}
 & \sup U((-\infty,t))=\sup_{\ell<t}\sup_{x^{*}\in T(z+\ell v)}\langle v,x^{*}\rangle\\
 & =-\inf_{h>0}\inf_{x^{*}\in T(z+tv+h(-v))}\langle-v,x^{*}\rangle\ge-\sup_{x^{*}\in T(z+tv)}\langle-v,x^{*}\rangle=\inf U(t).\end{align*}
 \end{proof}

A stronger form of condition (H) in the previous theorem has been
proven in \cite[Lemma 40.1(d)]{MR1723737} for $T\in\mathscr{M}(X)$
on $\operatorname*{int}D(T)$. Similar continuity properties of this
form are studied in our sixth section.

If $X$ is finite-dimensional then (D) together with the monotonicity
of $T$ provide (H) as seen in the proof of Theorem \ref{Lohne}.
Hence all versions of Theorem \ref{c-de-he-re} are extensions of
Theorem \ref{Lohne}. It is clear that at least for $x\in\operatorname*{int}D(T)$
the demiclosedness of $T$ implies the second part in (H) due to the
local boundedness of $T$ at $x$.  

Our next aim is to make conditions (H) and (D) in Theorem \ref{c-de-he-re}
as disjoint as possible.

\begin{theorem} \label{c-demi-fr}Let $X$ be a Banach space and
let $T\in\mathcal{M}(X)$ be such that $D(T)$ is nearly-convex. Then
$T\in\mathscr{M}(X)$ iff 

\noindent \emph{(C)} $T=T+N_{D(T)}$, 

\noindent \emph{(H')} $T$ has $w^{*}-$closed convex values and for
every $x\in\overline{D(T)}\setminus\operatorname*{ri}D(T)$, $v\in\operatorname*{ri}T_{\overline{D(T)}}(x)$\begin{equation}
\inf_{h>0}\inf_{x^{*}\in T(x+hv)}\langle v,x^{*}\rangle\le\sup_{x^{*}\in T(x)}\langle v,x^{*}\rangle,\label{demi-fr}\end{equation}

\noindent \emph{(D')} $T$ is demiclosed on $\operatorname*{ri}D(T)$.

\end{theorem}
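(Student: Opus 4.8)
The strategy is to reduce this to Theorem \ref{c-de-he-re} by showing that the pair of hypotheses (H') and (D') together recover the full condition (H) there. Since (C) is assumed in common, it suffices to prove: under $T\in\mathcal{M}(X)$ with $D(T)$ nearly-convex, (H')$\wedge$(D') implies (H), and conversely $T\in\mathscr{M}(X)$ yields (C), (H'), and (D'). The reverse direction is immediate: (H') is just the restriction of (H) (already known for $T\in\mathscr{M}(X)$ by Theorem \ref{c-de-he-re}) to boundary points $x\in\overline{D(T)}\setminus\operatorname*{ri}D(T)$, and (D') is weaker than full demiclosedness, which maximal monotone operators enjoy; (C) is standard.

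For the forward direction the work is to verify the inequality \eqref{hemi} for \emph{all} $x\in\overline{D(T)}$, $v\in\operatorname*{ri}T_{\overline{D(T)}}(x)$, given that it holds on the boundary (that is (H')) and that $T$ is demiclosed on $\operatorname*{ri}D(T)$ (that is (D')). The boundary case $x\in\overline{D(T)}\setminus\operatorname*{ri}D(T)$ is exactly (H'), so only $x\in\operatorname*{ri}D(T)$ remains. Here I would argue as follows. Fix such an $x$ and $v\in\operatorname*{ri}T_{\overline{D(T)}}(x)=S_{\operatorname*{ri}D(T)}(x)$, so $x$ lies in $\operatorname*{ri}D(T)$ and a small segment $x+hv$, $0\le h<\varepsilon$, stays in $\operatorname*{ri}D(T)$. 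By the local boundedness of $T$ at the interior point $x$ (which holds on $\operatorname*{aff}D(T)$; see the local-boundedness argument used in Proposition \ref{cl-val-graph}), the one-dimensional reduction $U:={\mathscr{T}}_{L,x}$ with $L=L(x;v)$ has $0$ in the interior of $D(U)$, and $U(t)$ is a nonempty bounded interval for $t$ near $0$. Demiclosedness of $T$ on $\operatorname*{ri}D(T)$ transfers, via the reduction, to closedness of $U$ near $t=0$: taking $s_n\in U(t_n)$ with $t_n\downarrow 0$, $s_n\to s$, lift to $x_n^*\in T(x+t_nv)$ with $\langle v,x_n^*\rangle=s_n$, use monotonicity against a local bound at $x$ to see $(x_n^*)_n$ is bounded, pass to a weak-star cluster point $x^*$, and conclude $x^*\in Tx$ by (D') since $x\in\operatorname*{ri}D(T)$; hence $s=\langle v,x^*\rangle\le\sup_{x^*\in Tx}\langle v,x^*\rangle$. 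Letting $s=\inf_{h>0}\inf U(h)=\lim_{h\downarrow 0}\inf U(h)$ (the limit exists by monotonicity of $h\mapsto\inf U(h)$) gives exactly \eqref{hemi} at $x$. Thus (H) holds, and Theorem \ref{c-de-he-re} applies to give $T\in\mathscr{M}(X)$.

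The main obstacle is the transfer of demiclosedness through the finite-dimensional reduction together with the boundedness step: one must be careful that the cluster point $x^*$ obtained from the bounded net $(x_n^*)_n$ actually satisfies $\langle v,x^*\rangle=s$ (pass to the limit in $\langle v,x_n^*\rangle=s_n$, which is fine since $v$ is fixed and convergence is weak-star) and, crucially, that $x^*\in Tx$ — this is where (D') is used, and it is the reason (D') need only be assumed on $\operatorname*{ri}D(T)$ rather than everywhere. Everything else is a routine reassembly of Lemma \ref{red-fin dim}, Proposition \ref{cl-val-graph}, Lemma \ref{her-cone}, and the local-boundedness fact, followed by an invocation of Theorem \ref{c-de-he-re}.
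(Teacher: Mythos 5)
Your proposal is correct and takes essentially the same route as the paper: both upgrade (H$'$) together with (D$'$) to condition (H) of Theorem \ref{c-de-he-re} by verifying (\ref{hemi}) at relative-interior points through local boundedness, weak-star compactness of the resulting bounded net of functionals, and demiclosedness on $\operatorname*{ri}D(T)$ to place the cluster point in $Tx$. The only differences are presentational: the paper argues by contradiction with a uniform gap $\epsilon_{0}$ where you take a minimizing sequence directly, and it carries out the affine-hull reduction to $T_{F}$ explicitly (using (C) together with Propositions \ref{val-inchise} and \ref{demi}, then Lemma \ref{sat}), which is the precise meaning of your remark that local boundedness holds \emph{on} $\operatorname*{aff}D(T)$.
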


\begin{proof} Assume first that $\operatorname*{aff}D(T)=X$. We
plan to use Theorem \ref{c-de-he-re}, therefore it suffices to show
that (\ref{hemi}) holds for $x\in\operatorname*{int}D(T)$ and for
every $v\in S_{\operatorname*{int}D(T)}(x)=X$. Assume by contradiction
that for a fixed $x\in\operatorname*{int}D(T)$ there is $v\in X$
such that (\ref{hemi}) does not hold. Taking Remark \ref{hemi-equiv}
into consideration, this entails the existence of $\epsilon_{0}>0$
such that \begin{equation}
\langle v,x^{*}\rangle\ge\langle v,y^{*}\rangle+\epsilon_{0},\ \forall h>0,\ x^{*}\in T(x+hv),\ y^{*}\in Tx.\label{contra}\end{equation}
 Take $h_{i}\downarrow0$ such that for every $i$, $x+h_{i}v$ belongs
to the neighborhood of $x$ on which $T$ is bounded. Hence any net
$(x_{i}^{*})_{i}$ with $x_{i}^{*}\in T(x+h_{i}v)$ is bounded. Eventually
on a subnet, denoted by the same index for simplicity, $x_{i}^{*}\rightarrow z^{*}\in Tx$
weakly-star in $X^{*}$, by the demiclosedness of $T$. We obtain
a contradiction if we use (\ref{contra}) for $h=h_{i}$, $x^{*}=x_{i}^{*}$,
and $y^{*}=z^{*}$ and pass to limit. 

In general we may assume that $0\in\operatorname*{aff}D(T)=:F$. The
above argument applies to $T_{F}$ via Propositions \ref{val-inchise},
\ref{demi}, because (C) provides $T=T+N_{F}$. Therefore $T_{F}\in\mathscr{M}(F)$
and $T\in\mathscr{M}(X)$ via Lemma \ref{sat}. \end{proof}

\begin{proposition} \label{NI}Let $X$ be a Banach space and let
$T\in\mathcal{M}(X)$ be such that $\operatorname*{core}D(T)\neq\emptyset$.
If $T$ is demiclosed and $Tx$ is unbounded for every $x\in D(T)\setminus\operatorname*{core}D(T)$
then $T$ is NI, $\operatorname*{int}D(T)=\operatorname*{core}D(T)$,
$\overline{D(T)}$ are convex; in particular $D(T)$ is nearly-convex.

\end{proposition}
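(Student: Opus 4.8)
The plan is to obtain the three geometric assertions first and then the NI property, in both cases by travelling along a line from a point of $\operatorname*{core}D(T)$ towards a target point and examining where the line would leave $D(T)$. Note first that $\operatorname*{core}D(T)\neq\emptyset$ forces $\operatorname*{aff}D(T)=X$, so no affine reduction is needed. Fix a maximal monotone extension $\mathcal{R}\supseteq T$ (Zorn's lemma). Since $\operatorname*{core}D(\mathcal{R})\supseteq\operatorname*{core}D(T)\neq\emptyset$, the standard facts recalled in the preliminaries (see \cite[Corollary\ 3]{MR2577332}) give $\operatorname*{int}D(\mathcal{R})=\operatorname*{core}D(\mathcal{R})$, this set is convex, $\overline{D(\mathcal{R})}$ is convex, and by the local boundedness theorem (see e.g.\ \cite[Theorem\ 2]{MR995141}) $\mathcal{R}$ is locally bounded on $\operatorname*{int}D(\mathcal{R})$.

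The heart of the geometric part is the inclusion $\operatorname*{int}D(\mathcal{R})\subseteq D(T)$. I would fix $z_{0}\in\operatorname*{core}D(T)$ and an arbitrary $x_{1}\in\operatorname*{int}D(\mathcal{R})$; the segment $[z_{0},x_{1}]$ lies in the convex set $\operatorname*{int}D(\mathcal{R})$, so $\mathcal{R}$ is bounded, say by $M$, on a neighbourhood of it. Put $w_{t}:=z_{0}+t(x_{1}-z_{0})$ and let $\tau$ be the infimum of $\{t\in[0,1]\mid w_{t}\notin D(T)\}$ (and $\tau:=1$ if this set is empty); then $\tau>0$ because $z_{0}\in\operatorname*{core}D(T)$, and $w_{t}\in D(T)$ for $0\le t<\tau$. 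If $w_{\tau}\notin D(T)$, choose $w_{t}^{*}\in Tw_{t}\subseteq\mathcal{R}w_{t}$ for $t<\tau$; then $\|w_{t}^{*}\|\le M$, so a weak-star convergent subnet together with the demiclosedness of $T$ puts $w_{\tau}$ in $D(T)$, a contradiction. Hence $w_{\tau}\in D(T)$. If moreover $\tau<1$, then $w_{\tau}\notin\operatorname*{core}D(T)$ (the points $w_{\tau+\varepsilon}$ leave $D(T)$), so $T(w_{\tau})$ is unbounded by hypothesis, contradicting $T(w_{\tau})\subseteq\mathcal{R}(w_{\tau})$ being bounded. Therefore $\tau=1$ and $x_{1}=w_{\tau}\in D(T)$. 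This yields $\operatorname*{int}D(\mathcal{R})\subseteq\operatorname*{int}D(T)\subseteq\operatorname*{core}D(T)\subseteq\operatorname*{core}D(\mathcal{R})=\operatorname*{int}D(\mathcal{R})$, hence $\operatorname*{int}D(T)=\operatorname*{core}D(T)$, and $\overline{D(T)}=\overline{\operatorname*{int}D(T)}=\overline{D(\mathcal{R})}$ is convex; in particular $D(T)$ is nearly-convex.

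For NI, assume instead that some $(x_{0},x_{0}^{*})\in Z$ satisfies $\langle x_{0}-a,x_{0}^{*}-a^{*}\rangle\ge\delta$ for every $(a,a^{*})\in T$ and some $\delta>0$ (this is exactly the failure of $\varphi_{T}\ge c$); then $x_{0}\notin D(T)$. Using the previous step, pick $z_{0}\in\operatorname*{int}D(T)=\operatorname*{core}D(T)$, a point $z_{0}^{*}\in Tz_{0}$, and $r,M>0$ with $z_{0}+rB_{X}\subseteq D(T)$ and $\|a^{*}\|\le M$ for $a^{*}\in T(z_{0}+ru)$, $u\in B_{X}$ (local boundedness theorem). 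Put $v:=x_{0}-z_{0}\neq0$, $w_{t}:=z_{0}+tv$, $L:=L(z_{0};v)$; by Proposition \ref{cl-val-graph}(ii) the one-dimensional reduction $U:=\mathscr{T}_{L,z_{0}}$ is closed. For $0<t<1$ and every $a^{*}\in T(w_{t})$, monotonicity against $(z_{0},z_{0}^{*})$ gives $\langle v,a^{*}\rangle\ge\langle v,z_{0}^{*}\rangle$, while the gap inequality gives $\langle v,a^{*}\rangle\le\langle v,x_{0}^{*}\rangle-\delta/(1-t)$; these already force $w_{t}\notin D(T)$ once $t$ is close enough to $1$, so $\tau:=\inf\{t\in(0,1]\mid w_{t}\notin D(T)\}$ satisfies $0<\tau<1$. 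Along $(0,\tau)$ the two estimates keep $U(t)=\langle v,T(w_{t})\rangle$ inside a fixed bounded interval, so closedness of $U$ forces $w_{\tau}\in D(T)$; then $w_{\tau}\notin\operatorname*{core}D(T)$, hence $T(w_{\tau})$ is unbounded, and monotonicity against the whole ball $z_{0}+rB_{X}$ gives $\langle v,s^{*}\rangle\ge(r/\tau)\|s^{*}\|-C$ for all $s^{*}\in T(w_{\tau})$, so $\langle v,\cdot\rangle$ is unbounded above on $T(w_{\tau})$ --- contradicting the gap bound $\langle v,s^{*}\rangle\le\langle v,x_{0}^{*}\rangle-\delta/(1-\tau)$. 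Hence $\varphi_{T}\ge c$ and $T$ is NI.

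The step I expect to be the real obstacle is controlling the line precisely where it would exit $D(T)$: the two hypotheses play complementary roles there, demiclosedness (or, in the NI step, closedness of $\mathscr{T}_{L,z}$ together with the gap-induced bound on $\langle v,\cdot\rangle$) pinning the critical point inside $D(T)$ whenever the operator blows up near it, and the unboundedness hypothesis --- read against the local boundedness at an interior point --- producing the contradiction once that point sits in $D(T)\setminus\operatorname*{core}D(T)$. Some care is also needed to justify that $\operatorname*{core}D(T)\neq\emptyset$ yields a genuine ball of $D(T)$ around interior points on which $T$ is uniformly bounded, and to keep the elementary one-line monotonicity estimates correctly oriented in the parameter $t$.
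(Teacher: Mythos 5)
Your proof is correct, but it is organized quite differently from the paper's. The paper establishes NI \emph{first}: assuming $\varphi_{T}(x,x^{*})<c(x,x^{*})$, it exploits the convexity of the Fitzpatrick function along the segment $[y,x]$ to show that the last point $z_{x}$ of $D(T)$ on that segment satisfies $\varphi_{T}(z_{x},x^{*})\ge c(z_{x},x^{*})$, hence $z_{x}\neq x$; it then takes a maximal monotone extension $M$ of the monotone set $T\cup\{(x,x^{*})\}$ to obtain local boundedness at $z_{x}\in\operatorname*{int}D(M)$, and demiclosedness plus the unboundedness hypothesis give the contradiction. Only afterwards does it derive the geometric assertions, using that an NI operator admits the \emph{unique} maximal extension $[\psi_{T}=c]$. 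You reverse the order: you first prove $\operatorname*{int}D(\mathcal{R})\subset D(T)$ for an \emph{arbitrary} Zorn extension $\mathcal{R}$ (which suffices for the geometry --- no uniqueness or NI needed), and then prove NI by a direct quantitative argument in which the gap $\delta$ forces the segment to exit $D(T)$ strictly before reaching $x_{0}$, the closedness of $\mathscr{T}_{L,z_{0}}$ from Proposition \ref{cl-val-graph} (ii) pins the exit point inside $D(T)$, and the unboundedness hypothesis contradicts the gap-induced upper bound on $\langle v,\cdot\rangle$ over $T(w_{\tau})$. Both proofs share the same mechanism at the critical point (local boundedness plus demiclosedness place it in $D(T)$, after which unboundedness yields the contradiction), but your NI step trades the Fitzpatrick-function computation and the auxiliary extension of $T\cup\{(x,x^{*})\}$ for the prerequisite that the geometric conclusions already hold (you genuinely need $\operatorname*{int}D(T)=\operatorname*{core}D(T)\neq\emptyset$ to get the ball $z_{0}+rB_{X}\subset D(T)$ and to invoke Proposition \ref{cl-val-graph}, since a priori a nonempty core does not give a nonempty interior). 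Two small points deserve one extra line each: the claim that $w_{\tau}\notin\operatorname*{core}D(T)$ when $\tau<1$ holds because otherwise the exit set would have infimum strictly larger than $\tau$; and the uniform bound $M$ on a neighbourhood of the segment $[z_{0},x_{1}]$ comes from covering the compact segment by finitely many local-boundedness neighbourhoods.
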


\begin{proof}Fix $y\in\operatorname*{core}D(T)$ and $y^{*}\in Ty$.
For every $x\in X$, set $S_{x}:=\{s\in[0,1]\mid tx+(1-t)y\in D(T),\ \forall t\in[0,s]\}$,
$s_{x}:=\sup S_{x}$, and $z_{x}:=s_{x}x+(1-s_{x})y$. Then $0<s_{x}\le1$
and $z_{x}\in\overline{D(T)}\setminus\operatorname*{core}D(T)$. 

Assume by contradiction that $T$ is not NI that is $\varphi_{T}(x,x^{*})<c(x,x^{*})$
for some $(x,x^{*})\in X\times X^{*}$. Then $x\notin D(T)$ since
$D(T)\times X^{*}\subset[\varphi_{T}\ge c]$ (see e.g. \cite[Proposition\ 3.2\ (i)]{MR2389004}).
For every $s\in S_{x}$, let $t:=s/s_{x}\in(0,1]$. Then $tz_{x}+(1-t)y\in D(T)$.
Again, using $D(T)\times X^{*}\subset[\varphi_{T}\ge c]$ one gets

\vspace{-0.5cm}

\begin{align*}
 & t\varphi_{T}(z_{x},x^{*})+(1-t)c(y,y^{*})=t\varphi_{T}(z_{x},x^{*})+(1-t)\varphi_{T}(y,y^{*})\\
 & \ge\varphi_{T}(t(z_{x},x^{*})+(1-t)(y,y^{*}))=\varphi_{T}(tz_{x}+(1-t)y,tx^{*}+(1-t)y^{*})\\
 & \ge c(tz_{x}+(1-t)y,tx^{*}+(1-t)y^{*})\\
 & =tc(z_{x},x^{*})+(1-t)c(y,y^{*})-t(1-t)c(z_{x}-y,x^{*}-y^{*}).\end{align*}

This yields $\varphi_{T}(z,x^{*})\ge c(z_{x},x^{*})-(1-t)c(z_{x}-y,x^{*}-y^{*})$.
After we let $s\rightarrow s_{x}$, i.e., $t\rightarrow1$, we get
$\varphi_{T}(z_{x},x^{*})\ge c(z,x^{*})$, from which $z_{x}\neq x$.

Let $M$ be a maximal monotone extension of $T\cup\{(x,x^{*})\}$.
Then from $x\in D(M)$, $y\in\operatorname*{core}D(M)=\operatorname*{int}D(M)$,
and $z_{x}\neq x$ we know that $z_{x}\in\operatorname*{int}D(M)$.
Therefore $M$ and $T$ are locally bounded at $z_{x}$. The demiclosedness
of $T$ implies that $z_{x}\in D(T)$ so $T(z_{x})$ is non-empty
bounded in contradiction to one of our assumptions. Therefore $T$
is NI.

In this case we know that $\mathcal{R}:=[\psi_{T}=c]$ is the unique
maximal monotone extension of $T$ (see \cite[Proposition\ 4\ (iii)]{MR2594359}),
$D(\mathcal{R})\subset\operatorname*{cl}(\operatorname*{conv}D(T))$
since $\operatorname*{dom}\psi_{T}\subset\operatorname*{cl}\,\!_{s\times w^{\ast}}\operatorname*{conv}(\operatorname*{Graph}T)$,
$\operatorname*{int}D(\mathcal{R})=\operatorname*{core}D(\mathcal{R})\neq\emptyset$,
and $D(\mathcal{R})$ is nearly-convex (see \cite[Corollary\ 3]{MR2577332}
or \cite[Theorem\ 1]{MR0253014}).

Take $x\in\operatorname*{int}D(\mathcal{R})\setminus\overline{D(T)}$.
Then $z_{x}\neq x$ and $z_{x}\in\operatorname*{int}D(\mathcal{R})$
since $\operatorname*{int}D(\mathcal{R})$ is convex and contains
$y$. Therefore the operators $\mathcal{R}$ and $T$ are locally
bounded at $z_{x}$ and $z_{x}\in D(T)$ because $T$ is demiclosed.
Thus we reach at the contradiction $z_{x}\in D(T)\setminus\operatorname*{core}D(T)$
and $T(z_{x})$ is non-empty bounded.

Hence $\operatorname*{int}D(\mathcal{R})\subset\overline{D(T)}\subset\overline{D(\mathcal{R})}$;
whence $\overline{D(T)}=\overline{D(\mathcal{R})}$ is convex and
$D(\mathcal{R})\subset\overline{D(T)}$ because $D(\mathcal{R})$
is nearly-convex. Again, for every $x\in\operatorname*{int}D(\mathcal{R})$,
$\mathcal{R}$ and $T$ are locally bounded at $x$ and $x\in D(T)$
due to the demiclosedness of $T$, that is, $\operatorname*{int}D(\mathcal{R})\subset D(T)$.
This yields that $\operatorname*{int}D(T)=\operatorname*{int}D(\mathcal{R})=\operatorname*{core}D(T)$
is convex and $\overline{D(T)}=\overline{D(\mathcal{R})}=\operatorname*{cl}(\operatorname*{int}D(T))$.
\end{proof}

\begin{remark}The unboundedness of an operator $T:X\rightrightarrows X^{*}$
at every $x\in D(T)\setminus\operatorname*{core}D(T)$ does not transmit
to its restriction $T_{\operatorname*{aff}D(T),z}$ even though $\operatorname*{ri}D(T)\neq\emptyset$.
Indeed, take $F\subset X$ a proper closed linear subspace and $T=F\times F^{\perp}$.
Then $Tx=F^{\perp}$ is unbounded, for every $x\in F$ while $T_{F}=F\times\{0\}$,
i.e., $T_{F}x=\{0\}$, for $x\in F$. \end{remark}

\begin{lemma}\label{rel} Let $(X,\|\cdot\|)$ be a Banach space,
let $T\in\mathcal{M}(X)$ be such that $\operatorname*{core}D(T)\neq\emptyset$,
and let $x\in D(T)\setminus\operatorname*{core}D(T)$. Consider the
conditions

\medskip

\emph{(i)} $Tx$ is unbounded;

\medskip

\emph{(ii)} $\sup\{\langle x-y,x^{*}\rangle\mid x^{*}\in Tx\}=+\infty$,
for every (some) $y\in\operatorname*{core}D(T)$. 

\medskip

Then \emph{(ii)} $\Rightarrow$ \emph{(i)}. If, in addition, $ $$\operatorname*{int}D(T)=\operatorname*{core}D(T)$
then \emph{(i)} $\Rightarrow$ \emph{(ii)}. \end{lemma}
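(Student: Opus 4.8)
The plan is to prove the two implications separately, using the local boundedness of monotone operators at interior points of their domain and a routine compactness/scaling argument on the segment joining $x$ to an interior point $y$.

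\emph{(ii) $\Rightarrow$ (i).} This is the easy direction: if $\sup\{\langle x-y,x^{*}\rangle\mid x^{*}\in Tx\}=+\infty$ for some fixed $y\in\operatorname*{core}D(T)$, then already the scalar quantities $\langle x-y,x^{*}\rangle$ are unbounded as $x^{*}$ ranges over $Tx$; since $|\langle x-y,x^{*}\rangle|\le\|x-y\|\,\|x^{*}\|$, the norms $\|x^{*}\|$ must be unbounded on $Tx$, i.e. $Tx$ is unbounded. Here I would simply note that $x\neq y$ (as $x\notin\operatorname*{core}D(T)\ni y$), so $\|x-y\|>0$ and the estimate is non-degenerate.

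\emph{(i) $\Rightarrow$ (ii), assuming $\operatorname*{int}D(T)=\operatorname*{core}D(T)$.} Fix $y\in\operatorname*{core}D(T)=\operatorname*{int}D(T)$ and pick $y^{*}\in Ty$. By local boundedness of $T$ at $y$ (Rockafellar's theorem, available since $y\in\operatorname*{int}D(T)$), choose $r>0$ and $M>0$ with $y+rB_{X}\subset D(T)$ and $\|u^{*}\|\le M$ for all $u^{*}\in T(y+ru)$, $u\in B_{X}$. Now suppose $Tx$ is unbounded: take $x^{*}_{n}\in Tx$ with $\|x^{*}_{n}\|\to\infty$. For each $n$ pick $u_{n}\in B_{X}$ with $\langle u_{n},x^{*}_{n}\rangle\ge\tfrac12\|x^{*}_{n}\|$, and let $w_{n}:=y+ru_{n}\in D(T)$ with some $w_{n}^{*}\in Tw_{n}$, $\|w_{n}^{*}\|\le M$. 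Monotonicity of $T$ applied to the pairs $(x,x^{*}_{n})$ and $(w_{n},w_{n}^{*})$ gives
\[
\langle x-y-ru_{n},\,x^{*}_{n}-w_{n}^{*}\rangle\ge 0,
\]
which rearranges to
\[
\langle x-y,x^{*}_{n}\rangle \;\ge\; r\langle u_{n},x^{*}_{n}\rangle + \langle x-y,w_{n}^{*}\rangle - r\langle u_{n},w_{n}^{*}\rangle
\;\ge\; \tfrac{r}{2}\|x^{*}_{n}\| - M\|x-y\| - rM.
\]
Letting $n\to\infty$ shows $\langle x-y,x^{*}_{n}\rangle\to+\infty$, hence $\sup\{\langle x-y,x^{*}\rangle\mid x^{*}\in Tx\}=+\infty$ for this particular $y$. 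To upgrade ``some $y$'' to ``every $y$'', let $y'\in\operatorname*{core}D(T)$ be arbitrary and repeat the monotonicity inequality between $(x,x^{*}_{n})$ and a bounded selection near $y'$ (local boundedness at $y'$ holds since $y'\in\operatorname*{int}D(T)$ as well); the same computation yields $\langle x-y',x^{*}_{n}\rangle\to+\infty$.

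The main obstacle, such as it is, is a matter of bookkeeping rather than difficulty: one must be careful that the direction $u_{n}$ realizing (most of) $\|x^{*}_{n}\|$ can indeed be chosen in $B_{X}$ with $y+ru_{n}$ still in the ball $y+rB_{X}\subset D(T)$ where $T$ is bounded, and that $x\neq y$ so the left-hand side is a genuine (not vacuously bounded) functional evaluation. The hypothesis $\operatorname*{int}D(T)=\operatorname*{core}D(T)$ is used precisely to guarantee that every $y\in\operatorname*{core}D(T)$ is an \emph{interior} point, so that Rockafellar local boundedness applies and the bounded selection $w_{n}^{*}$ exists; without it, $\operatorname*{core}D(T)$ points need not be local boundedness points and the argument breaks down. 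I would close by remarking that the ``for every / for some'' equivalence inside (ii) is automatic from the displayed computation, so it need not be stated as a separate step.
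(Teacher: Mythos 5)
Your proof is correct and follows essentially the same route as the paper: the easy direction via $|\langle x-y,x^{*}\rangle|\le\|x-y\|\,\|x^{*}\|$ with $x\neq y$, and the converse via local boundedness of $T$ on a ball $y+rB_{X}\subset\operatorname*{int}D(T)$ together with the monotonicity inequality $\langle x-y-ru,x^{*}-y^{*}\rangle\ge0$. The only cosmetic difference is that the paper passes to the supremum over $u\in B_{X}$ to obtain $\langle x-y,x^{*}\rangle\ge r\|x^{*}\|-(\|x-y\|+r)M$ for every $x^{*}\in Tx$, whereas you use a near-maximizing $u_{n}$ and the factor $\tfrac12$; both yield the same conclusion.
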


\begin{proof} (ii) $\Rightarrow$ (i) Let $y\in\operatorname*{core}D(T)$
be such that $\sup\{\langle x-y,x^{*}\rangle\mid x^{*}\in Tx\}=+\infty$.
Since $x\neq y$ and $\langle x-y,x^{*}\rangle\le\|x-y\|\|x^{*}\|$,
this gives $\sup\{\|x^{*}\|\mid x^{*}\in Tx\}=+\infty$. 

(i) $\Rightarrow$ (ii) For every $y\in\operatorname*{int}D(T)$ let
$M,r>0$ be such that $y+rB_{X}\in D(T)$ and $\|y^{*}\|\le M$, for
every $y^{*}\in T(y+ru)$, $u\in B_{X}$. The monotonicity of $T$
implies that $\langle x-y-ru,x^{*}-y^{*}\rangle\ge0$, for every $x^{*}\in Tx$,
$y^{*}\in T(y+ru)$, $u\in B_{X}$. This yields $\langle x-y,x^{*}\rangle\ge r\langle u,x^{*}\rangle-\|x-y-ru\|\|y^{*}\|$,
for every $x^{*}\in Tx$, $y^{*}\in T(y+ru)$, $u\in B_{X}$; followed
by $\langle x-y,x^{*}\rangle\ge r\langle u,x^{*}\rangle-(\|x-y\|+r)M$,
for every $x^{*}\in Tx$, $u\in B_{X}$. Pass to supremum over $u\in B_{X}$
to get $\langle x-y,x^{*}\rangle\ge r\|x^{*}\|-(\|x-y\|+r)M$, for
every $x^{*}\in Tx$; whence $\sup\{\langle x-y,x^{*}\rangle\mid x^{*}\in Tx\}=+\infty$.
\end{proof}

\strut

The advantage of condition (ii) in the previous lemma is that it transmits
to $T_{\operatorname*{aff}D(T),z}$; thereby allowing to relativize
Proposition \ref{NI}. 

\begin{proposition} \label{NIR}Let $X$ be a Banach space and let
$T\in\mathcal{M}(X)$ be such that $^{ic}D(T)\neq\emptyset$ and $T=T+N_{\operatorname*{aff}D(T)}$.
If $T$ is demiclosed and for every $x\in D(T)\setminus{}^{ic}D(T)$
there is $y\in{}^{ic}D(T)$ such that $\sup\{\langle x-y,x^{*}\rangle\mid x^{*}\in Tx\}=+\infty$
then $T_{\operatorname*{aff}D(T),z}$ is NI, for every $z\in\operatorname*{aff}D(T)$
and $D(T)$ is nearly-convex. \end{proposition}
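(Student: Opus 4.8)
The plan is to reduce to the case $\operatorname*{aff}D(T)=X$ by passing to the restriction $T_{F}$ and then to invoke Proposition \ref{NI}. First I would note that $^{ic}D(T)\neq\emptyset$ forces $\operatorname*{aff}D(T)$ to be (strongly) closed, so $F:=\operatorname*{aff}D(T)$ is a closed affine subspace; after a translation -- which preserves monotonicity, demiclosedness, the NI property, near-convexity of the domain, the identity $T=T+N_{\operatorname*{aff}D(T)}$, and the quantity $\sup\{\langle x-y,x^{*}\rangle\mid x^{*}\in Tx\}$ -- we may assume $0\in F$, so that $F$ is a closed linear subspace, $D(T)\subset F$, and $T=T+N_{F}$. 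Then $F$ is a Banach space and $T_{F}:F\rightrightarrows F^{*}$ is well defined with $D(T_{F})=D(T)\cap F=D(T)$.

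Next I would check that $T_{F}$ meets the hypotheses of Proposition \ref{NI} in $F$. Since $T=T+N_{F}\in\mathcal{M}(X)$, the equivalences recorded at the start of the proof of Lemma \ref{sub} give $T_{F}\in\mathcal{M}(F)$. Because $\operatorname*{aff}D(T_{F})=F$ and $^{ic}D(T)={}^{i}D(T)$ is the algebraic interior of $D(T)$ relative to $\operatorname*{aff}D(T)=F$, we get $\operatorname*{core}_{F}D(T_{F})={}^{ic}D(T)\neq\emptyset$. Demiclosedness of $T$ together with $T=T+N_{F}$ yields, via Proposition \ref{demi}, that $T_{F}$ is demiclosed in $F\times F^{*}$.

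The main point -- and the reason condition (ii) of Lemma \ref{rel} was isolated -- is to transfer the unboundedness hypothesis, since, as the Remark after Proposition \ref{NI} shows, plain unboundedness of $Tx$ need not pass to $T_{F}$. Here this transfer is immediate: fix $x\in D(T_{F})\setminus\operatorname*{core}_{F}D(T_{F})=D(T)\setminus{}^{ic}D(T)$ and pick $y\in{}^{ic}D(T)=\operatorname*{core}_{F}D(T_{F})$ with $\sup\{\langle x-y,x^{*}\rangle\mid x^{*}\in Tx\}=+\infty$; since $x-y\in F$ we have $\langle x-y,x^{*}\rangle=\langle x-y,x^{*}|_{F}\rangle$ for each $x^{*}\in Tx$, and $T_{F}x=\{x^{*}|_{F}\mid x^{*}\in Tx\}$, whence $\sup\{\langle x-y,f^{*}\rangle\mid f^{*}\in T_{F}x\}=+\infty$. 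By Lemma \ref{rel} (ii) $\Rightarrow$ (i), $T_{F}x$ is unbounded for every such $x$. Proposition \ref{NI} now applies to $T_{F}$ and gives that $T_{F}$ is NI and that $D(T_{F})=D(T)$ is nearly-convex (near-convexity computed in the closed subspace $F$ coincides with near-convexity in $X$, as relative interior and closure of a subset of $F$ are the same whether taken in $F$ or in $X$). Finally, for every $z\in\operatorname*{aff}D(T)=F$ one has $T_{\operatorname*{aff}D(T),z}=(T_{F})_{z}$ by the identity $T_{F,z}=(T_{F})_{z}$, and this is NI because $T_{F}$ is NI and translations preserve the NI property; undoing the initial translation gives the statement for the original $T$. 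The only delicate point is the domain-geometry bookkeeping in the reduction step; the rest is a direct application of Lemma \ref{rel} and Proposition \ref{NI}.
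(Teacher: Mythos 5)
Your proposal is correct and follows essentially the same route as the paper: reduce by translation to $0\in F:=\operatorname*{aff}D(T)$, pass to $T_{F}$ (demiclosed by Proposition \ref{demi}), transfer the hypothesis $\sup\{\langle x-y,x^{*}\rangle\mid x^{*}\in Tx\}=+\infty$ verbatim to $T_{F}$ since $x-y\in F$, and conclude via Lemma \ref{rel} and Proposition \ref{NI}. Your extra bookkeeping (closedness of $\operatorname*{aff}D(T)$ from $^{ic}D(T)\neq\emptyset$, and why condition (ii) of Lemma \ref{rel} rather than raw unboundedness is what transfers) matches the remarks the paper itself makes around Lemma \ref{rel}.
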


\begin{proof} If $\operatorname*{aff}D(T)=X$ the conclusion follows
from Proposition \ref{NI} and Lemma \ref{rel}. In general we may
assume that $0\in F:=\operatorname*{aff}D(T)$ otherwise we replace
$T$ by $T_{z}$ with $z\in\operatorname*{aff}D(T)$. Note that $T_{F}$
is demiclosed due to $T=T+N_{F}$ and Proposition \ref{demi}. Also
$\sup\{\langle x-y,f^{*}\rangle\mid f^{*}\in T_{F}x\}=\sup\{\langle x-y,x^{*}\rangle\mid x^{*}\in Tx\}=+\infty$
which allows the conclusion for $T_{F}$. Therefore $T_{F}$ is NI
and $D(T)$ is nearly convex. \end{proof}

\strut

The following results are versions of Theorems \ref{c-de-he-re},
\ref{c-demi-fr} mainly by relaxing condition (C).

\begin{theorem} \label{u+r-mm}Let $X$ be a Banach space and $T:X\rightrightarrows X^{*}$
be such that $^{ic}D(T)\neq\emptyset$. Then $T\in\mathscr{M}(X)$
iff $T=T+N_{\operatorname*{aff}D(T)}$, $T$ is representable, and
for every $x\in D(T)\setminus{}^{ic}D(T)$ there is $y\in{}^{ic}D(T)$
such that $\sup\{\langle x-y,x^{*}\rangle\mid x^{*}\in Tx\}=+\infty$.
In this case $^{ic}D(T)=\operatorname*{ri}D(T)$ and $D(T)$ is nearly-convex.\end{theorem}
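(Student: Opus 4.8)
The plan is to establish the two implications separately. The direct one is bookkeeping on top of the properties of maximal monotone operators recalled in Section~2; the converse is the substantive part, and I would derive it from Proposition~\ref{NIR} together with one observation on the Fitzpatrick function of an operator whose domain lies in a closed proper subspace. The two trailing identities are then read off from whichever side is assumed.

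For the direct implication, let $T\in\mathscr{M}(X)$. Then $T$ is dual-representable. Since $^{ic}D(T)\neq\emptyset$ we also have $^{ic}(\operatorname*{conv}D(T))\neq\emptyset$, so $\overline{D(T)}$ is convex, $\operatorname*{ri}D(T)={}^{ic}D(T)$, and $D(T)$ is nearly-convex; in this situation $T=T+N_{D(T)}$ (which follows from the sum rule of Theorem~\ref{mmcs} applied to $T$ and $N_{\overline{D(T)}}$, and is used e.g.\ in the proof of Theorem~\ref{c-de-he-re}), so $T\subseteq T+N_{\operatorname*{aff}D(T)}\subseteq T+N_{D(T)}=T$ gives $T=T+N_{\operatorname*{aff}D(T)}$. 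For the growth condition I would fix $x\in D(T)\setminus{}^{ic}D(T)$ and $y\in{}^{ic}D(T)$: as $x$ lies on the relative boundary of the convex set $\overline{D(T)}$ and $\operatorname*{aff}D(T)$ is closed, a separation argument inside $\operatorname*{aff}D(T)$, extended by Hahn--Banach, yields $n^{*}\in N_{D(T)}x$ with $\langle x-y,n^{*}\rangle>0$; then for any $x_{1}^{*}\in Tx$ we have $x_{1}^{*}+tn^{*}\in Tx$ for all $t\geq0$ (by $T=T+N_{D(T)}$), hence $\langle x-y,x_{1}^{*}+tn^{*}\rangle=\langle x-y,x_{1}^{*}\rangle+t\langle x-y,n^{*}\rangle\to+\infty$.

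For the converse, assume the three conditions hold. Representability gives $T\in\mathcal{M}(X)$ demiclosed with $w^{*}$-closed convex values, so Proposition~\ref{NIR} applies: $D(T)$ is nearly-convex and $T_{\operatorname*{aff}D(T),z}$ is NI for every $z\in\operatorname*{aff}D(T)$. After a translation (which preserves all the data) we may assume $0\in F:=\operatorname*{aff}D(T)$, a \emph{closed} linear subspace because $^{ic}D(T)\neq\emptyset$; thus $T_{F}:=T_{\operatorname*{aff}D(T),0}$ is NI in $F\times F^{*}$ and the hypothesis now reads $T=T+N_{F}$. The crux is to promote ``$T_{F}$ is NI'' to ``$T$ is NI''. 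If $x\notin F$, Hahn--Banach provides $n_{0}^{*}\in F^{\perp}$ with $\langle x,n_{0}^{*}\rangle>0$; since $D(T)\subseteq F$ and $F^{\perp}=N_{F}(a)$ for $a\in F$, every $(a,a^{*})\in T$ satisfies $(a,a^{*}+\lambda n_{0}^{*})\in T+N_{F}=T$ while $\langle a,n_{0}^{*}\rangle=0$, so letting $\lambda\to+\infty$ in $\varphi_{T}(x,x^{*})=\sup_{(a,a^{*})\in T}(\langle a,x^{*}\rangle+\langle x,a^{*}\rangle-\langle a,a^{*}\rangle)$ forces $\varphi_{T}(x,x^{*})=+\infty\geq c(x,x^{*})$. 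If $x\in F$, then again because $D(T)\subseteq F$ the quantity $\langle a,x^{*}\rangle+\langle x,a^{*}\rangle-\langle a,a^{*}\rangle$, $(a,a^{*})\in T$, depends on $a^{*}$ only through $a^{*}|_{F}$, so passing to the supremum gives $\varphi_{T}(x,x^{*})=\varphi_{T_{F}}(x,x^{*}|_{F})\geq\langle x,x^{*}\rangle=c(x,x^{*})$ by NI of $T_{F}$. Hence $\varphi_{T}\geq c$, i.e.\ $T$ is NI; being also representable, $T\in\mathscr{M}(X)$ by the criterion (recalled in Section~2) that an NI representable operator is maximal monotone. In either direction $T\in\mathscr{M}(X)$, whence $^{ic}D(T)=\operatorname*{ri}D(T)$ and $D(T)$ is nearly-convex.

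I expect the main obstacle to be exactly this passage from $T$ to $T_{F}$. The naive route --- transporting a representative of $T$ down to a representative of $T_{F}$ by infimal projection along $F^{\perp}$ --- is awkward, since the infimal projection of a function that is lower semicontinuous for $s\times w^{*}$ need not remain so. Arguing at the level of the Fitzpatrick function circumvents this: the hypothesis $D(T)\subseteq F$ is used twice, once to make $\varphi_{T}$ identically $+\infty$ off $F\times X^{*}$ and once to factor $\varphi_{T}$ through $F\times F^{*}$ on $F\times X^{*}$, after which NI of $T$ drops out of NI of $T_{F}$. The remaining ingredients --- the supporting-functional step in the direct implication, the translation, and checking that the hypotheses of Proposition~\ref{NIR} and the NI property descend to $T_{F}$ --- are routine and patterned on earlier arguments in the paper.
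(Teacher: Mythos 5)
Your proof is correct, and while the skeleton (direct implication via support functionals and $T=T+N_{D(T)}$; converse via Proposition \ref{NIR}) coincides with the paper's, the key step of the converse is handled by a genuinely different route. The paper pushes the representative down to $F:=\operatorname*{aff}D(T)$: for $h\in\mathcal{R}_{T}$ it forms the infimal projection $g(x,f^{*})=\inf\{h(x,x^{*})\mid x^{*}|_{F}=f^{*}\}$ and cites an external result (Theorem 5.1 of the sum-and-chain-rules paper) to certify $g\in\mathcal{R}_{T_{F}}$ --- this citation is precisely where the lower-semicontinuity difficulty you flag is absorbed --- then concludes $T_{F}\in\mathscr{M}(F)$ from ``NI $+$ representable'' on $F$ and lifts maximality back to $X$ via Lemma \ref{sat}. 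You instead leave the representative alone and lift the NI property upward: your two-case computation ($x\notin F$ forces $\varphi_{T}(x,\cdot)\equiv+\infty$ because $T=T+N_{F}$ and $D(T)\subset F$; $x\in F$ gives $\varphi_{T}(x,x^{*})=\varphi_{T_{F}}(x,x^{*}|_{F})\ge c(x,x^{*})$) is elementary and correct, and then ``NI $+$ representable'' is applied on $X$ itself, so neither Lemma \ref{sat} nor the representability-transfer theorem is needed. Your version is more self-contained and sidesteps the delicate point entirely; the paper's version yields as a by-product that $T_{F}$ is itself representable and maximal in $F$, which is occasionally useful elsewhere. The only stylistic quibble is in the direct implication, where invoking the full sum rule of Theorem \ref{mmcs} to get $T=T+N_{\overline{D(T)}}$ is heavier than necessary (monotonicity of the sum plus maximality of $T$ suffices), but the conclusion is the same.
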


\begin{proof} For the direct implication if $T\in\mathscr{M}(X)$
then $T$ is representable with $^{ic}D(T)=\operatorname*{ri}D(T)$
and $D(T)$ nearly-convex (see \cite[Corollary\ 3]{MR2577332}) and
every $x\in D(T)\setminus\operatorname*{ri}D(T)$ is a support point
of $D(T)$, i.e., there is $u^{*}\in N_{D(T)}x$, $u^{*}\neq0$. This
makes $Tx$ unbounded because $Tx=Tx+N_{D(T)}x$ and so $\sup\{\langle x-y,x^{*}\rangle\mid x^{*}\in Tx\}=+\infty$,
for every $y\in{}^{ic}D(T)$.

Proposition \ref{NIR} may be used for the converse implications since
every representable operator is monotone demiclosed to find that $T_{F}$
is NI and $D(T)$ is nearly-convex, where we may assume without loss
of generality that $0\in F:=\operatorname*{aff}D(T)$.

Let $h\in\mathcal{R}_{T}$. Consider $g(x,f^{*})=\inf\{h(x,x^{*})\mid x^{*}|_{F}=f^{*}\}$,
$(x,f^{*})\in F\times F^{*}$. According to \cite[Theorem\ 5.1]{MR2453098},
$g\in\mathcal{R}_{T_{F}}$ since $h,h^{\square}\le\psi_{T}$ (see
\cite[Remark\ 3.6]{MR2453098} or \cite[(5)]{MR2577332}) which yields
$\operatorname*{Pr}_{X}(\operatorname*{dom}h^{*})\subset\operatorname*{Pr}_{X}(\operatorname*{dom}\psi_{T})\subset\overline{D(T)}\subset F$
because $D(T)$ is nearly convex. Hence $T_{F}$ is representable
and from \cite[Theorem\ 2.3]{MR2207807} we know that $T\in\mathscr{M}(F)$.
Again, Lemma \ref{sat} provides $T\in\mathscr{M}(X)$. \end{proof}

\strut

In particular, the previous result can be used to reprove Theorem
\ref{c-de-he-re} under the (R) assumption.

\begin{corollary} \label{ao}Let $X$ be a Banach space and $T:X\rightrightarrows X^{*}$
be such that $D(T)$ is algebraically open (that is, $D(T)={}^{ic}D(T)$).
Then $T\in\mathscr{M}(X)$ iff $T=T+N_{\operatorname*{aff}D(T)}$
and $T$ is representable. In this case $^{ic}D(T)=\operatorname*{ri}D(T)$
and $D(T)$ is nearly-convex.\end{corollary}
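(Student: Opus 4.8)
The plan is to obtain this as an immediate corollary of Theorem \ref{u+r-mm}. First I would observe that the hypothesis $D(T)={}^{ic}D(T)$ (algebraic openness of $D(T)$) forces $D(T)\setminus{}^{ic}D(T)=\emptyset$. Consequently the "boundary" condition featuring in Theorem \ref{u+r-mm}---that for every $x\in D(T)\setminus{}^{ic}D(T)$ there exist $y\in{}^{ic}D(T)$ with $\sup\{\langle x-y,x^{*}\rangle\mid x^{*}\in Tx\}=+\infty$---is vacuously satisfied, regardless of $T$. I would also check that the standing hypothesis $^{ic}D(T)\neq\emptyset$ of Theorem \ref{u+r-mm} holds: in either direction of the claimed equivalence $T$ is non-empty (if $T\in\mathscr{M}(X)$ this is immediate; if $T$ is representable then $T=[f=c]$ for some $f\in\mathcal{R}$, and $[f=c]\in\mathcal{M}(X)$ as recalled in Section~2), so $\emptyset\neq D(T)={}^{ic}D(T)$.

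With these two remarks in place, I would simply invoke Theorem \ref{u+r-mm}: deleting the now-vacuous third condition, the equivalence there reads $T\in\mathscr{M}(X)$ iff $T=T+N_{\operatorname*{aff}D(T)}$ and $T$ is representable, which is exactly the assertion. The two supplementary conclusions, $^{ic}D(T)=\operatorname*{ri}D(T)$ and the near-convexity of $D(T)$, are transcribed verbatim from the conclusion of Theorem \ref{u+r-mm} (and in the present algebraically open situation they in addition give $D(T)=\operatorname*{ri}D(T)$, so $D(T)$ is relatively open, hence trivially nearly-convex).

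There is no genuine obstacle here; the proof is a one-line specialization. The only point deserving a word of care is the verification that $T$ is non-empty---needed so that $^{ic}D(T)\neq\emptyset$ and Theorem \ref{u+r-mm} applies---and, in the forward direction, recalling from Section~2 that every $T\in\mathscr{M}(X)$ is dual-representable, hence representable, and satisfies $T=T+N_{\operatorname*{aff}D(T)}$ because $\mathscr{M}(X)\ni T\subset T+N_{\operatorname*{aff}D(T)}\in\mathcal{M}(X)$ forces equality by maximality.
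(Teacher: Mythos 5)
Your proposal is correct and coincides with the paper's intended argument: Corollary \ref{ao} is stated as an immediate specialization of Theorem \ref{u+r-mm}, where the algebraic openness $D(T)={}^{ic}D(T)$ renders the boundary condition vacuous. Your extra care in checking that $T$ is non-empty in both directions (so that $^{ic}D(T)\neq\emptyset$ and the theorem applies) is a sensible touch consistent with the paper's convention that representable operators, being of the form $[f=c]\in\mathcal{M}(X)$, are non-empty.
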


\begin{remark}One wonders whether, in the previous results, the contribution
of $T|_{D(T)\setminus{}^{ic}D(T)}$ could be avoided. Let $T\in\mathscr{M}(X)$
with $ $$\operatorname*{int}D(T)\neq\emptyset$. In general, would
the NI type and representability transmit from $T$ to $T|_{\operatorname*{int}D(T)}$?
The answer is negative. For example, take $C\subsetneq X$ closed
convex with $\operatorname*{int}C\neq\emptyset$. Then $N_{C}\in\mathscr{M}(X)$
(so it is NI and representable) while $N_{C}|_{\operatorname*{int}C}=\operatorname*{int}C\times\{0\}$
is neither NI (because it is not unique; $X\times\{0\}$ and $N_{C}$
being two different strict maximal monotone extensions of $N_{C}|_{\operatorname*{int}C}$)
nor representable (since otherwise, according to Corollary \ref{ao},
$N_{C}|_{\operatorname*{int}C}\in\mathscr{M}(X)$).\end{remark}

In the next result we avoid condition (C) completely.

\begin{theorem} \label{dr}Let $X$ be a Banach space and let $T:X\rightrightarrows X^{*}$
be dual-representable. Consider the conditions

\medskip

\emph{(i)} $D(T)$ is nearly-convex,

\medskip

\emph{(ii)} $^{ic}D(T)$ is non-empty convex, and $D(T)\subset\operatorname*{cl}({}^{ic}D(T))$,

\medskip

\emph{(iii)} $^{ic}\operatorname*{Pr}_{X}(\operatorname*{dom}h)\neq\emptyset$,
for some $h\in\mathcal{D}_{T}$,

\medskip

\emph{(iv)} $T\in\mathscr{M}(X)$.

\medskip

Then \emph{(i) $\Leftrightarrow$ (ii) $\Leftrightarrow$ (iii)} $\Rightarrow$
\emph{(iv)}. \end{theorem}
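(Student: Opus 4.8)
The plan is to first dispose of the equivalences (i) $\Leftrightarrow$ (ii) $\Leftrightarrow$ (iii) and then establish the implication (iii) $\Rightarrow$ (iv), which is the substantive part. For (i) $\Leftrightarrow$ (ii): recall from the Preliminaries that a set $S$ is nearly-convex iff $\operatorname*{ri}S$ is non-empty convex and $S\subset\operatorname*{cl}(\operatorname*{ri}S)$, and that $\operatorname*{ri}C={}^{i}C$ whenever $\operatorname*{aff}C$ is closed; here $\operatorname*{aff}D(T)$ is closed because $T$ is dual-representable, hence representable, hence $\operatorname*{dom}\psi_T$ and its projections are controlled — more directly, $D(T)\subset\overline{D(T)}$ and one checks $\operatorname*{aff}D(T)$ closed from the representative. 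So ${}^{ic}D(T)={}^{i}D(T)=\operatorname*{ri}D(T)$ and the two formulations coincide verbatim.

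For (iii) $\Rightarrow$ (i): if $h\in\mathcal{D}_T$ with ${}^{ic}\operatorname*{Pr}_X(\operatorname*{dom}h)\neq\emptyset$, then since $T=[h=c]$ we have $D(T)=\operatorname*{Pr}_X([h=c])\subset\operatorname*{Pr}_X(\operatorname*{dom}h)=:D_0$, and $D_0$ is convex with non-empty ${}^{ic}$, so $\operatorname*{ri}D_0={}^{ic}D_0$ is a non-empty convex set dense in $D_0$. It then suffices to show $\operatorname*{ri}D_0\subset D(T)$: this is the standard fact that on the algebraic-interior of the domain of a convex representative the Fitzpatrick-type inequality forces $[h=c]$ to be non-empty there (use that $h\ge c$, $h^\square\ge c$, and a separation/Rockafellar-type argument as in the proof of Proposition \ref{NI}, or invoke that $\operatorname*{ri}D(\psi_T)$-type sets sit inside $D(T)$ via \cite[Corollary 3]{MR2577332} applied to the unique maximal extension). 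Once $\operatorname*{ri}D_0\subset D(T)\subset D_0=\operatorname*{cl}(\operatorname*{ri}D_0)$ — wait, we need $D(T)\subset\operatorname*{cl}(\operatorname*{ri}D_0)=\overline{D_0}$, which is automatic — and $\operatorname*{ri}D(T)=\operatorname*{ri}D_0$ by the sandwich, we get that $D(T)$ is nearly-convex. For (i) $\Rightarrow$ (iii): if $D(T)$ is nearly-convex then $\operatorname*{ri}D(T)\neq\emptyset$; pick any $h\in\mathcal{D}_T$, and since $D(T)\subset\operatorname*{Pr}_X(\operatorname*{dom}h)$ while also $\operatorname*{Pr}_X(\operatorname*{dom}h)\subset\operatorname*{Pr}_X(\operatorname*{dom}\psi_T)\subset\overline{D(T)}$ (using $h\le\psi_T$, cf. \cite[Remark 3.6]{MR2453098} and that $T$ being dual-representable makes $\psi_T$ proper), we sandwich $\operatorname*{Pr}_X(\operatorname*{dom}h)$ between a convex set with non-empty ${}^{ic}$ and its closure, giving ${}^{ic}\operatorname*{Pr}_X(\operatorname*{dom}h)=\operatorname*{ri}D(T)\neq\emptyset$.

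For (iii) $\Rightarrow$ (iv), having (i)–(iii) all available, I would reduce to the affine hull: assume without loss of generality $0\in F:=\operatorname*{aff}D(T)$ and pass to $T_F$. Since $T$ dual-representable gives $\varphi_T,\psi_T\in\mathcal{D}_T$ when $T$ is additionally known maximal — but we do not yet know maximality — the cleaner route is: a dual-representative $h\in\mathcal{D}_T$ yields $T=[h=c]$ with $h,h^\square\ge c$, so $T$ is in particular NI-type material; more precisely, use the characterization $T\in\mathscr{M}(X)$ iff $T$ is NI and representable (\cite{MR2207807}, \cite{MR2453098}). Dual-representability already gives representability; it remains to show $T$ is NI, i.e., $\varphi_T\ge c$. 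The key computation: for a d-representative $h$, $h^\square\ge c$ together with the definition of $\varphi_T=c_T^\square$ and the fact that $h\ge c_T$ gives $\varphi_T=c_T^\square\ge h^\square\ge c$ — so NI is automatic from dual-representability directly, once $\psi_T$ (equivalently $\varphi_T$) is proper, and properness follows because $\operatorname*{dom}h\neq\emptyset$ forces $\operatorname*{dom}c_T^{\square\square}\neq\emptyset$. Wait — this would make (iv) follow from dual-representability alone, without (i)–(iii); that cannot be right, so the gap must be that dual-representability of $T$ as defined ($T=[h=c]$ for some $h\in\mathcal{D}$) does not by itself force $T$ representable with the matching representative, because $[h=c]$ for $h\in\mathcal{D}\subset\mathcal{R}$ is representable, but the obstruction is rather that $h$ need not be a representative of a *maximal* operator — the genuine issue is that $\mathcal{D}_T$-membership gives $T$ NI and representable only after one knows $\operatorname*{Pr}_X(\operatorname*{dom}h)$ is well-behaved, and this is exactly where (iii) enters. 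So the actual argument is: from (iii), pick $h\in\mathcal{D}_T$ with ${}^{ic}\operatorname*{Pr}_X(\operatorname*{dom}h)\neq\emptyset$; work in $F=\operatorname*{aff}D(T)$; use \cite[Theorem 5.1]{MR2453098} (as invoked in the proof of Theorem \ref{u+r-mm}) to produce $g\in\mathcal{R}_{T_F}$ from $h$, the interiority condition guaranteeing that the infimal projection $g(x,f^*):=\inf\{h(x,x^*)\mid x^*|_F=f^*\}$ stays proper and $s\times w^*$-lsc on $F\times F^*$; then $T_F$ is representable on the Banach space $F$ with $D(T_F)$ having non-empty ${}^{ic}$ (hence $\operatorname*{core}$, since $\operatorname*{aff}D(T_F)=F$), and dual-representability passes down so $T_F$ is NI; by \cite[Theorem 2.3]{MR2207807} (NI + representable $\Rightarrow$ maximal) $T_F\in\mathscr{M}(F)$; finally dual-representability also gives $T=T+N_{\operatorname*{aff}D(T)}$ — because $h^\square\ge c$ forces $\operatorname*{dom}h^*$ to project into $F$, so adding $N_F$ changes nothing — and Lemma \ref{sat} upgrades $T_F\in\mathscr{M}(F)$ to $T\in\mathscr{M}(X)$.

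The main obstacle I anticipate is precisely the inference "$T_F$ is representable": one must verify that $g$ defined by infimal projection along the annihilator $F^\perp$ lands back in $\mathcal{R}(F\times F^*)$ — i.e. is still convex (clear), still $\ge c$ on $F\times F^*$ (needs $h\ge c_T$ and that the infimum is over $x^*$ with $x^*|_F=f^*$ while $x\in D(T)\subset F$), and still $s\times w^*$-lsc, the last of which is the delicate point and is exactly what the interiority hypothesis ${}^{ic}\operatorname*{Pr}_X(\operatorname*{dom}h)\neq\emptyset$ buys via a standard inf-projection-preserves-lsc argument under a constraint-qualification. This is handled in \cite[Theorem 5.1]{MR2453098}, which I would cite rather than reprove, mirroring the proof of Theorem \ref{u+r-mm}. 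A secondary subtlety is confirming $\operatorname*{Pr}_X(\operatorname*{dom}h)\subset\overline{D(T)}$ so that the projected domain does not "leak" outside $F$; this uses $h\le\psi_T$ and $\operatorname*{Pr}_X(\operatorname*{dom}\psi_T)\subset\operatorname*{cl}\operatorname*{conv}D(T)=\overline{D(T)}$ (the last equality from near-convexity of $D(T)$, now available), so the chain-rule/restriction machinery of Section 3 applies cleanly.
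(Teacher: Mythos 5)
There is a genuine gap in your treatment of the substantive implication (iii) $\Rightarrow$ (iv): the NI property is never actually proved. You correctly catch that the computation $\varphi_{T}=c_{T}^{\square}\ge h^{\square}\ge c$ is invalid (indeed $h\le c_{T}$, so conjugation gives $\varphi_{T}\le h^{\square}$, the useless direction), but your replacement --- pass to $T_{F}$ and assert that ``dual-representability passes down so $T_{F}$ is NI'' --- is the same unproved claim one level down. Dual-representability alone does not imply NI in non-reflexive Banach spaces; establishing NI is precisely the content of (iii) $\Rightarrow$ (iv), and it is exactly the ingredient that, in the proof of Theorem \ref{u+r-mm} which you are mirroring, is supplied by Proposition \ref{NIR} via the demiclosedness-plus-unboundedness hypothesis that is not available here. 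The paper's mechanism is entirely different: for a point $z=(x,x^{*})$ m.r.\ to $T$ it forms the two-dimensional infimal projection $f(t,s)=\inf\{h(y,y^{*})\mid y=tx,\ \langle x,y^{*}\rangle=s\}$ through the linear relation $C$, uses the hypothesis $^{ic}\operatorname*{Pr}_{X}(\operatorname*{dom}h)\neq\emptyset$ as the constraint qualification in \cite[Theorem 2.8.6 (v)]{MR1921556} so that the conjugation formula is exact (the ``$\min$'' is attained), concludes $f^{\square}\in\mathcal{D}_{[f^{\square}=c]}$ and hence $[f^{\square}=c]\in\mathscr{M}(\mathbb{R}^{2})$ by Burachik--Svaiter \cite{MR1974634} (where reflexivity makes dual-representability suffice), and then shows $(1,\langle x,x^{*}\rangle)$ is m.r.\ to this maximal operator, forcing $x\in D(T)$. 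Without some such reduction to a reflexive setting (or the argument of \cite{MR2675662}), your plan does not close.

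A secondary problem: your proof of (iii) $\Rightarrow$ (i) relies on ``$\operatorname*{ri}D_{0}\subset D(T)$'' justified by facts about the unique maximal monotone extension and \cite[Corollary 3]{MR2577332}, i.e.\ on properties that presuppose maximality (or at least NI), so as written it is circular. The paper avoids this by proving the equivalences only in the order (i) $\Rightarrow$ (ii) $\Rightarrow$ (iii) $\Rightarrow$ (iv) and then recovering (i) from (iii) \emph{after} maximality is in hand. Your (ii) $\Rightarrow$ (iii) sandwich $^{ic}D(T)\subset D(T)\subset\operatorname*{Pr}_{X}(\operatorname*{dom}h)\subset\overline{D(T)}=\operatorname*{cl}({}^{ic}D(T))$ is essentially the paper's argument and is fine.
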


\begin{proof} (i) $\Rightarrow$ (ii) is straightforward.

(ii) $\Rightarrow$ (iii) Every $h\in\mathcal{D}_{T}$ has $h\le\psi_{T}$;
therefore $^{ic}D(T)\subset D(T)\subset\operatorname*{Pr}_{X}(\operatorname*{dom}h)\subset\operatorname*{Pr}_{X}(\operatorname*{dom}\psi_{T})\subset\overline{D(T)}=\operatorname*{cl}({}^{ic}D(T))$
since $\overline{D(T)}$ is convex. This implies that $\operatorname*{aff}(\operatorname*{Pr}_{X}(\operatorname*{dom}h))=\operatorname*{aff}D(T)$
and $^{ic}D(T)=^{ic}\operatorname*{Pr}_{X}(\operatorname*{dom}h)\neq\emptyset$. 

(iii) $\Rightarrow$ (iv) It suffices to prove that $T$ is NI. To
this end we show that $x\in D(T)$ whenever $z=(x,x^{*})$ is m.r.
to $T$. 

Eventually by making a translation we may assume without loss of generality
that $0\in\,^{ic}\operatorname*{Pr}_{X}(\operatorname*{dom}h)$. 

Let $f\in\Lambda(\mathbb{R}^{2})$ be given by $f(t,s)=\inf\{h(y,y^{*})\mid(y,y^{*})\in C(t,s)\}$,
where $C:\mathbb{R}^{2}\rightrightarrows Z$, $(y,y^{*})\in C(t,s)$
iff $y=tx$, $\left\langle x,y^{*}\right\rangle =s$. Note that $C$
is linear and $s_{\mathbb{R}^{2}}\times(s_{X}\times w_{X^{*}}^{*})-$closed.
Since $h\in\Gamma(Z)$ and $D:=\operatorname*{Im}C-\operatorname*{dom}h=(\mathbb{R}x-\operatorname*{Pr}_{X}(\operatorname*{dom}h))\times X^{*}$
has $\operatorname*{aff}D=(\mathbb{R}x-\operatorname*{aff}(\operatorname*{Pr}_{X}(\operatorname*{dom}h))\times X^{*}$
closed, because $\operatorname*{aff}(\operatorname*{Pr}_{X}(\operatorname*{dom}h)$
is closed and $\mathbb{R}x$ is finite dimensional, we get $0\in\,^{ic}D$.

According to \cite[Theorem 2.8.6 (v)]{MR1921556}, one gets \[
f^{*}(s,t)=\min\{h^{*}(y^{*},y^{**})\mid(s,t)\in C^{*}(y^{*},y^{**})\},\ (s,t)\in\mathbb{R}^{2}.\]
Since $(s,t)\in C^{*}(y^{*},y^{**})$ iff $y^{**}=tx$, $\left\langle x,y^{*}\right\rangle =s$
one gets $f^{\square}(t,s)=f^{*}(s,t)=\min\{h^{\square}(tx,y^{*})\mid\left\langle x,y^{*}\right\rangle =s\}$.
Note that since $f\ge c$, we have $f^{\square\square}=\operatorname*{cl}f\ge c$
and $f^{\square}\in\mathscr{D}_{[f^{\square}=c]}$. From \cite[Theorem\ 3.1]{MR1974634}
we know that $[f^{\square}=c]\in\mathscr{M}(\mathbb{R}^{2})$.

Note that from $h^{\square}\in\mathscr{R}_{T}$ one finds that for
every $(t,s)\in[f^{\square}=c]$ there is $y^{*}\in T(tx)$ such that
$\left\langle x,y^{*}\right\rangle =s$. Since $z=(x,x^{*})$ is m.r.
to $T$ and implicitly to $(tx,y^{*})$ we get $(1-t)(\left\langle x,x^{*}\right\rangle -s)=\left\langle x-tx,x^{*}-y^{*}\right\rangle \ge0$,
that is $(1,\left\langle x,x^{*}\right\rangle )$ is m.r. to $[f^{\square}=c]$.
Hence there is $y^{*}\in T(x)$ such that $\left\langle x,y^{*}\right\rangle =\left\langle x,x^{*}\right\rangle $;
in particular $x\in\operatorname*{dom}T$.

(iii) $\Rightarrow$ (iv) Whenever (iii) happens we know that $T\in\mathscr{M}(X)$.
From \cite[Corollary\ 3]{MR2577332} we have that $^{ic}\operatorname*{Pr}_{X}(\operatorname*{dom}h)={}^{ic}D(T)=\operatorname*{ri}D(T)$
and; whence $D(T)$ is nearly-convex. \end{proof}

\begin{remark}Under the assumption $0\in\,^{ic}\operatorname*{Pr}_{X}(\operatorname*{dom}h)$
for some $h\in\mathcal{D}_{T}$ the implication (iii) $\Rightarrow$
(iv) has been previously proved in \cite[Theorem\ 3.1]{MR2675662}.
The advantage of our argument for (iii) $\Rightarrow$ (iv) is that,
besides its brevity, it works for $X$ merely a locally convex space
under the modified assumption that $^{ib}\operatorname*{Pr}_{X}(\operatorname*{dom}h)\neq\emptyset$,
for some $h\in\mathcal{D}_{T}$, where for $S\subset X$, $^{ib}S={}^{i}S$
if the linear subspace parallel to $\operatorname*{aff}S$ is barrelled,
$^{ib}S=\emptyset$ otherwise.\end{remark}

\section{Continuity properties}

\begin{proposition} \label{sai}Let $(X,\|\cdot\|)$ be a normed
barrelled space and let $A\subset X^{*}$ be non-empty, $w^{*}-$closed,
and convex. Then $\operatorname*{int}(\operatorname*{dom}\sigma_{A})\neq\emptyset$
iff there exist $y\in X$, $\beta\in\mathbb{R}$ such that $\langle y,x^{*}\rangle\ge\|x^{*}\|+\beta$,
for every $x^{*}\in A$. \end{proposition}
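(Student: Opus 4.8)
The plan is to prove the two implications separately; the reverse one is elementary, while the forward one rests on the classical fact that a lower semicontinuous convex function on a barrelled normed space is locally bounded above (equivalently, continuous) at every point of the interior of its domain, and this is the only place where the barrelledness hypothesis is used. For the sufficiency I would argue directly: assume $y\in X$ and $\beta\in\mathbb{R}$ satisfy $\langle y,x^{*}\rangle\ge\|x^{*}\|+\beta$ for every $x^{*}\in A$. Then for each $u\in B_{X}$ and each $x^{*}\in A$,
\[
\langle -y+u,x^{*}\rangle=-\langle y,x^{*}\rangle+\langle u,x^{*}\rangle\le-\|x^{*}\|-\beta+\|x^{*}\|=-\beta,
\]
so $\sigma_{A}(-y+u)\le-\beta<\infty$. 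Hence $-y+B_{X}\subset\operatorname*{dom}\sigma_{A}$, which gives $-y\in\operatorname*{int}(\operatorname*{dom}\sigma_{A})\ne\emptyset$. (This step uses only $\langle u,x^{*}\rangle\le\|x^{*}\|$ for $u\in B_{X}$, and neither the convexity nor the $w^{*}$--closedness of $A$.)

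For the necessity, I would first record that $\sigma_{A}$ is convex (a supremum of continuous linear functionals), lower semicontinuous (a supremum of continuous functions), and proper ($\sigma_{A}(0)=0$ since $A\ne\emptyset$). Pick $x_{0}\in\operatorname*{int}(\operatorname*{dom}\sigma_{A})$ with $x_{0}+rB_{X}\subset\operatorname*{dom}\sigma_{A}$ for some $r>0$, and set $f(u):=\sigma_{A}(x_{0}+u)-\sigma_{A}(x_{0})$, which is proper convex lower semicontinuous with $f(0)=0$ and $rB_{X}\subset\operatorname*{dom}f$. The crux is to show that $f$ is bounded above on a ball around $0$, and here I would invoke a barrel argument: consider
\[
C:=\{u\in X\mid f(u)\le1\}\cap\{u\in X\mid f(-u)\le1\}.
\]
One checks that $C$ is closed (an intersection of sublevel sets of lsc convex functions), convex, balanced (for $u\in C$ and $\lambda\in[0,1]$, convexity and $f(0)=0$ give $f(\lambda u)\le\lambda f(u)\le\lambda\le1$ and likewise $f(-\lambda u)\le1$, while the case $\lambda\in[-1,0]$ follows since $u\in C\Leftrightarrow-u\in C$), and absorbing (given $v\in X$, pick $s>0$ with $\pm sv\in\operatorname*{dom}f$; for small $t>0$ convexity yields $f(\pm tv)\le(t/s)f(\pm sv)\le1$, so $tv\in C$). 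Thus $C$ is a barrel, and since $X$ is barrelled it is a neighbourhood of $0$; hence $\rho B_{X}\subset C$ for some $\rho>0$, i.e.\ $\sigma_{A}(x_{0}+u)\le1+\sigma_{A}(x_{0})=:M$ whenever $\|u\|\le\rho$.

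It then remains to read off $y$ and $\beta$ from this bound: for every $x^{*}\in A$ and every $u$ with $\|u\|\le\rho$,
\[
\langle x_{0},x^{*}\rangle+\langle u,x^{*}\rangle=\langle x_{0}+u,x^{*}\rangle\le M,
\]
and taking the supremum over such $u$ gives $\langle x_{0},x^{*}\rangle+\rho\|x^{*}\|\le M$, that is, $\langle -x_{0}/\rho,x^{*}\rangle\ge\|x^{*}\|-M/\rho$. So $y:=-x_{0}/\rho$ and $\beta:=-M/\rho$ work. I expect the main obstacle to be the barrel argument for the local boundedness of $\sigma_{A}$ on the interior of its domain — this is exactly where barrelledness (rather than, say, completeness of $X$) is genuinely needed — while everything else is routine manipulation of the support function.
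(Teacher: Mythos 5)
Your proof is correct and follows essentially the same route as the paper: barrelledness gives continuity (local boundedness) of $\sigma_{A}$ at an interior point of its domain, from which the coercive affine minorant is extracted, and the converse is the observation that $-y+B_{X}\subset\operatorname*{dom}\sigma_{A}$. The only differences are presentational — you spell out the barrel argument that the paper leaves to the standard continuity theorem for lsc convex functions, and you replace the paper's Fenchel-conjugate computation ($f\le g\Rightarrow f^{*}\ge g^{*}$, which is where the paper uses the $w^{*}$-closed convexity of $A$) with direct supremum estimates.
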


\begin{proof} Assuming that $x_{0}\in\operatorname*{int}(\operatorname*{dom}\sigma_{A})$,
$\sigma_{A}$ is continuous at $x_{0}$; whence, for some $r>0$,
$f(u):=\sigma_{A}(x_{0}+u)\le\gamma<\infty$, for every $u\in rB_{X}$
or equivalently, $f\le I_{rB_{X}}+\gamma$. Therefore $f^{*}(x^{*})=I_{A}(x^{*})-\langle x_{0},x^{*}\rangle\ge(I_{rB_{X}}+\gamma)^{*}(x^{*})=r\|x^{*}\|-\gamma$,
for $x^{*}\in X^{*}$. This implies $\langle y,x^{*}\rangle\ge\|x^{*}\|+\beta$,
for every $x^{*}\in A$, where $y=-(1/r)x_{0}$, $\beta=-\gamma/r$. 

Conversely, assume that, for some $y\in X$, $\beta\in\mathbb{R}$,
$\langle y,x^{*}\rangle\ge\|x^{*}\|+\beta$, for every $x^{*}\in A$.
Then $h(x^{*}):=I_{A}(x^{*})+\langle y,x^{*}\rangle\ge g(x^{*}):=\|x^{*}\|+\beta$,
for every $x^{*}\in X^{*}$, followed by $h^{*}(u)=\sigma_{A}(-y+u)\le g^{*}(u)=I_{B_{X}}(u)-\beta$,
for every $u\in X$, that is, $\sigma_{A}(-y+u)\le-\beta$, for every
$u\in B_{X}$. This yields $-y\in\operatorname*{int}(\operatorname*{dom}\sigma_{A})$.
\end{proof}

\begin{lemma} \label{saz}Let $(X,\|\cdot\|)$ be a normed barrelled
space and let $A\subset X^{*}$ be non-empty, $w^{*}-$closed, and
convex with $[\sigma_{A}<0]\neq\emptyset$. Then $\operatorname*{int}[\sigma_{A}<0]\neq\emptyset$
iff $\operatorname*{int}(\operatorname*{dom}\sigma_{A})\neq\emptyset$.
\end{lemma}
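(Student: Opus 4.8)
The forward implication is immediate and I would dispose of it first: since $[\sigma_A<0]\subseteq\operatorname*{dom}\sigma_A$, every interior point of $[\sigma_A<0]$ is an interior point of $\operatorname*{dom}\sigma_A$, so $\operatorname*{int}[\sigma_A<0]\neq\emptyset$ forces $\operatorname*{int}(\operatorname*{dom}\sigma_A)\neq\emptyset$. The substance is the converse, and the plan is to exhibit an explicit point of $\operatorname*{int}[\sigma_A<0]$ built from a point of $[\sigma_A<0]$ together with the vector supplied by Proposition \ref{sai}.

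First I would fix $x_0\in[\sigma_A<0]$ and put $\epsilon:=-\sigma_A(x_0)$, which is a genuine positive real since $A\neq\emptyset$ (so $\sigma_A(x_0)>-\infty$) and $\sigma_A(x_0)<0$; by definition $\langle x_0,x^*\rangle\leq-\epsilon$ for every $x^*\in A$. Next, applying Proposition \ref{sai} to the hypothesis $\operatorname*{int}(\operatorname*{dom}\sigma_A)\neq\emptyset$, I obtain $y\in X$ and $\beta\in\mathbb{R}$ with $\langle y,x^*\rangle\geq\|x^*\|+\beta$ for all $x^*\in A$. The candidate interior point is $\bar x_\mu:=\mu x_0-y$, with $\mu>0$ to be fixed large. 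The computation to run is then, for $u\in B_X$ and $x^*\in A$ (using $\langle u,x^*\rangle\leq\|x^*\|$),
\[
\langle\bar x_\mu+u,x^*\rangle=\mu\langle x_0,x^*\rangle-\big(\langle y,x^*\rangle-\langle u,x^*\rangle\big)\leq-\mu\epsilon-\big((\|x^*\|+\beta)-\|x^*\|\big)=-\mu\epsilon-\beta .
\]
Taking the supremum over $x^*\in A$ gives $\sigma_A(\bar x_\mu+u)\leq-\mu\epsilon-\beta$ for every $u\in B_X$. Choosing $\mu$ with $\mu\epsilon>-\beta$ (for instance $\mu=(|\beta|+1)/\epsilon$, which works whatever the sign of $\beta$) yields $\sigma_A(\bar x_\mu+u)<0$ for all $u\in B_X$, i.e. $\bar x_\mu+B_X\subseteq[\sigma_A<0]$, whence $\bar x_\mu\in\operatorname*{int}[\sigma_A<0]$, as desired.

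There is no deep obstacle here; the only non-routine point is hitting on the right combining vector. One has to cancel the (a priori unbounded) linear growth of $\langle y,\cdot\rangle$ on $A$ — which Proposition \ref{sai} shows controls membership in $\operatorname*{dom}\sigma_A$ — against a large negative push from a point of $[\sigma_A<0]$, which is precisely why the hypothesis $[\sigma_A<0]\neq\emptyset$ is indispensable alongside $\operatorname*{int}(\operatorname*{dom}\sigma_A)\neq\emptyset$. If one prefers to avoid quoting Proposition \ref{sai}, the same argument can be phrased intrinsically: $\sigma_A$ is lsc convex and $X$ is barrelled, hence $\sigma_A$ is continuous on $\operatorname*{int}(\operatorname*{dom}\sigma_A)$ and so bounded above, say by $\gamma$, on some ball $x_1+rB_X$; then subadditivity and positive homogeneity of $\sigma_A$ give $\sigma_A(\mu x_0+x_1+u)\leq\mu\sigma_A(x_0)+\gamma<0$ for $u\in rB_X$ and $\mu$ large, so $\mu x_0+x_1\in\operatorname*{int}[\sigma_A<0]$.
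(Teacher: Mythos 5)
Your proof is correct, and your primary argument takes a genuinely different route from the paper's. The paper works directly with the function $\sigma_{A}$: it picks $y\in\operatorname*{int}(\operatorname*{dom}\sigma_{A})$, uses the continuity of the lsc convex function $\sigma_{A}$ at $y$ (this is where barrelledness enters) to get $\sigma_{A}\le M$ on $y+rB_{X}$, and then slides along the segment from $y$ to a point $x\in[\sigma_{A}<0]$, using convexity to show that $x_{t}:=tx+(1-t)y$ with $t>M/(M-\sigma_{A}(x))$ has a small ball $x_{t}+(1-t)rB_{X}$ inside $[\sigma_{A}<0]$. You instead route through the coercivity estimate of Proposition \ref{sai}, $\langle y,x^{*}\rangle\ge\|x^{*}\|+\beta$ on $A$, and build the explicit point $\mu x_{0}-y$ whose entire unit ball lies in $[\sigma_{A}<0]$; the computation $\sigma_{A}(\mu x_{0}-y+u)\le-\mu\epsilon-\beta$ is correct, and the choice $\mu=(|\beta|+1)/\epsilon$ closes it. The two arguments are ultimately powered by the same fact (the proof of Proposition \ref{sai} itself rests on the continuity of $\sigma_{A}$ at an interior point of its domain), but yours packages it as a quantitative dual estimate and yields a cleaner, more explicit interior point, at the cost of invoking the preceding proposition; your second, ``intrinsic'' variant with $\mu x_{0}+x_{1}$ and sublinearity is essentially the paper's proof with the convex combination replaced by a conic one. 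One small point worth making explicit in either version: $\sigma_{A}(x_{0})>-\infty$ because $A\neq\emptyset$, which you do note and which is needed for $\epsilon$ to be a finite positive number.
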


\begin{proof} While the direct implication is straightforward for
the converse let $y\in\operatorname*{int}(\operatorname*{dom}\sigma_{A})$,
$x\in[\sigma_{A}<0]$, and $r,M>0$ such that $\sigma_{A}(y+ru)\le M<\infty$,
for every $u\in B_{X}$ (since $\sigma_{A}$ is continuous at $y$).
Fix $1>t>M/(M-\sigma_{A}(x))$ and let $x_{t}:=tx+(1-t)y$. We have\[
\sigma_{A}(x_{t}+(1-t)ru)=\sigma_{A}(tx+(1-t)(y+ru))\le t\sigma_{A}(x)+(1-t)M<0,\ \forall u\in B_{X},\]
which shows that $x_{t}\in\operatorname*{int}([\sigma_{A}<0])$. \end{proof} 

\begin{proposition} \label{si}Let $(X,\|\cdot\|)$ be a normed barrelled
space, let $T\in\mathcal{M}(X)$ be such that $\operatorname*{int}D(T)\neq\emptyset$,
and let $x\in X$, $x^{*}\in X^{*}$ be such that $Tx$ is $w^{*}-$closed
convex and $x^{*}\not\in Tx$. Then $\operatorname*{int}[\sigma_{Tx}<x^{*}]\neq\emptyset$.
If, in addition, $Tx=Tx+N_{D(T)}x$ and $D(T)$ is nearly-convex then
$\operatorname*{int}[\sigma_{Tx}<x^{*}]\subset\operatorname*{int}T_{D(T)}x=S_{\operatorname*{int}D(T)}(x)$.
\end{proposition}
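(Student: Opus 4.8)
The plan is to split the statement into its two assertions and handle them with the machinery already set up. For the first assertion, I would translate so that $x=0$ (replacing $T$ by $T_x$, which preserves monotonicity, the nonemptiness of $\operatorname*{int}D(T)$, the $w^*$-closedness and convexity of the value at the base point, and the relation $x^*\notin Tx$). Set $C:=Tx-x^*$, which is a nonempty $w^*$-closed convex subset of $X^*$ not containing $0$; then $[\sigma_{Tx}<x^*]=[\sigma_C<0]$. By the Hahn--Banach separation theorem applied to the $w^*$-closed convex set $C$ and the point $0$, there is $y\in X$ with $\sigma_C(y)<0$, so $[\sigma_C<0]\neq\emptyset$. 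The point of invoking the barrelledness of $X$ is to get $\operatorname*{int}[\sigma_C<0]\neq\emptyset$ via Lemma \ref{saz}: by that lemma it suffices to show $\operatorname*{int}(\operatorname*{dom}\sigma_C)\neq\emptyset$, and I expect to obtain this from the local boundedness of $T$ at an interior point of $D(T)$. Concretely, pick $x_0\in\operatorname*{int}D(T)$ and $M,r>0$ with $x_0+rB_X\subset D(T)$ and $\|u^*\|\le M$ for all $u^*\in T(x_0+rv)$, $v\in B_X$; the monotonicity inequality $\langle x-x_0-rv,\,x^*'-u^*\rangle\ge0$ (for $x^*'\in Tx$) then, after passing to the supremum over $v\in B_X$, yields a bound of the form $\langle x_0-x,x^*'\rangle\ge r\|x^*'\|-(\|x-x_0\|+r)M$; subtracting the base point $x^*$ this gives an estimate $\langle x_0-x,c^*\rangle\ge r\|c^*\|+\beta$ for all $c^*\in C$ with a suitable constant $\beta$, which by Proposition \ref{sai} is exactly the statement that $\operatorname*{int}(\operatorname*{dom}\sigma_C)\neq\emptyset$. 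Lemma \ref{saz} then delivers $\operatorname*{int}[\sigma_C<0]\neq\emptyset$, i.e.\ $\operatorname*{int}[\sigma_{Tx}<x^*]\neq\emptyset$.

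For the second assertion, assume in addition $Tx=Tx+N_{D(T)}x$ and that $D(T)$ is nearly-convex; recall that near-convexity means $\operatorname*{ri}D(T)=\operatorname*{int}D(T)$ is nonempty convex, $\overline{D(T)}$ is convex, and $\overline{D(T)}=\operatorname*{cl}(\operatorname*{int}D(T))$. The identification $\operatorname*{int}T_{D(T)}x=S_{\operatorname*{int}D(T)}(x)$ is the cited fact \cite[Proposition\ 7, p.\ 169]{MR749753} (stated just before Theorem \ref{c-de-he-re}), applied to $C:=\overline{D(T)}$ whose relative interior is $\operatorname*{int}D(T)$ and whose closure is itself; note that since $\operatorname*{int}D(T)\neq\emptyset$ the tangent cone $T_{D(T)}x$ to $\overline{D(T)}$ at $x\in\overline{D(T)}$ has nonempty interior, so $\operatorname*{int}T_{D(T)}x=\operatorname*{ri}T_{D(T)}x=S_{\operatorname*{ri}D(T)}(x)=S_{\operatorname*{int}D(T)}(x)$. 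So what remains is the inclusion $\operatorname*{int}[\sigma_{Tx}<x^*]\subset\operatorname*{int}T_{D(T)}x$, and for this it suffices (taking interiors, both sides being open) to show $[\sigma_{Tx}<x^*]\subset T_{D(T)}x$, or even the slightly weaker $[\sigma_{Tx}<x^*]\subset(N_{D(T)}x)^{-}$, since $(N_{D(T)}x)^{-}=T_{D(T)}x$ for the closed convex set $\overline{D(T)}$ (this is the polar-cone duality used in the proof of Theorem \ref{c-1dim-int-fin dim}, namely \cite[Proposition\ 4, p.\ 168]{MR749753}). Here $N_{D(T)}x$ denotes the normal cone of $\overline{D(T)}$ at $x$ (equivalently of $D(T)$, since these coincide for $x\in D(T)$ when $\overline{D(T)}$ is convex).

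The key computation is then: if $\sigma_{Tx}(z)<\langle z,x^*\rangle$, I claim $\langle z,n^*\rangle\le0$ for every $n^*\in N_{D(T)}x$. Indeed, using the hypothesis $Tx=Tx+N_{D(T)}x$, for any fixed $y^*\in Tx$ (nonempty since $x\in D(T)$) and any $n^*\in N_{D(T)}x$ and any $\lambda\ge0$ we have $y^*+\lambda n^*\in Tx$, so $\langle z,y^*\rangle+\lambda\langle z,n^*\rangle=\langle z,y^*+\lambda n^*\rangle\le\sigma_{Tx}(z)<\langle z,x^*\rangle<\infty$; letting $\lambda\to+\infty$ forces $\langle z,n^*\rangle\le0$, as claimed. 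Hence $z\in(N_{D(T)}x)^{-}=T_{D(T)}x$, which gives $[\sigma_{Tx}<x^*]\subset T_{D(T)}x$ and, passing to interiors, $\operatorname*{int}[\sigma_{Tx}<x^*]\subset\operatorname*{int}T_{D(T)}x=S_{\operatorname*{int}D(T)}(x)$, completing the proof. The main obstacle I anticipate is purely bookkeeping in the first part — making sure the monotonicity estimate is arranged so that Proposition \ref{sai} applies verbatim (i.e.\ producing the precise inequality $\langle \tilde y,c^*\rangle\ge\|c^*\|+\tilde\beta$ on $C$ after rescaling $x_0-x$ by $1/r$) — together with the routine verification that translation genuinely reduces to the $x=0$ case; the conceptual content is entirely in Lemmas \ref{sai}, \ref{saz} and the tangent/normal cone duality.
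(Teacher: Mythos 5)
Your proposal is correct and follows essentially the same route as the paper's proof: non-emptiness of $[\sigma_{Tx}<x^{*}]$ by separation, interiority via Lemma \ref{saz} combined with Proposition \ref{sai} and the local-boundedness/monotonicity estimate at a point of $\operatorname*{int}D(T)$, and the second inclusion via $Tx=Tx+N_{D(T)}x$ together with the polar/tangent cone identities $(N_{D(T)}x)^{-}=T_{\overline{D(T)}}x$ and $\operatorname*{int}T_{\overline{D(T)}}x=S_{\operatorname*{int}D(T)}(x)$. Two small points to tidy up: the monotonicity estimate should come out as $\langle x-x_{0},y^{*}\rangle\ge r\|y^{*}\|-(\|x-x_{0}\|+r)M$ for $y^{*}\in Tx$, so the witness in Proposition \ref{sai} is $y=(1/r)(x-x_{0})$ rather than $(1/r)(x_{0}-x)$; and the degenerate case $x\notin D(T)$ (where $Tx=\emptyset$, $\sigma_{Tx}\equiv-\infty$ and $[\sigma_{Tx}<x^{*}]=X$) should be disposed of separately at the outset, since Lemma \ref{saz} and Proposition \ref{sai} are stated for non-empty $A$.
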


\begin{proof} If $x\not\in D(T)$ then $\sigma_{Tx}=-\infty$, $[\sigma_{Tx}<x^{*}]=X$,
and the conclusion is trivial. If $x\in D(T)$ let $A:=Tx-x^{*}$.
A simple separation argument shows that $[\sigma_{A}<0]=[\sigma_{Tx}<x^{*}]\neq\emptyset$.
According to Lemma \ref{saz} and Proposition \ref{sai}, it suffices
to show that for some $y\in X$, $\beta\in\mathbb{R}$ one has $\langle y,y^{*}-x^{*}\rangle\ge\|y^{*}-x^{*}\|+\beta$,
for every $y^{*}\in Tx$. Equivalently, we need to show that there
exist $y\in X$, $\beta\in\mathbb{R}$ such that \begin{equation}
\langle y,y^{*}\rangle\ge\|y^{*}\|+\beta,\ \forall y^{*}\in Tx.\label{e}\end{equation}
Fix $z\in\operatorname*{int}D(T)$ and let $M,r>0$ be such that $z+rB_{X}\subset D(T)$
and $\|z^{*}\|\le M$, for every $z^{*}\in T(z+ru)$, $u\in B_{X}$.
The monotonicity of $T$ provides\[
\langle z+ru-x,z^{*}-y^{*}\rangle\ge0,\ \forall z^{*}\in T(z+ru),u\in B_{X},y^{*}\in Tx.\]
This yields $\langle x-z,y^{*}\rangle\ge r\|y^{*}\|-M(\|z-x\|+r)$,
that is, (\ref{e}) holds with $y=(1/r)(x-z)$, $\beta=-M/r(\|z-x\|+r)$. 

If, in addition, $Tx=Tx+N_{D(T)}x$ and $D(T)$ is nearly-convex then
$[\sigma_{Tx}<x^{*}]\subset(N_{D(T)}x)^{-}=T_{\overline{D(T)}}x=\operatorname*{cl}(\operatorname*{int}T_{\overline{D(T)}}x)=\operatorname*{cl}S_{\operatorname*{int}D(T)}(x)$.
Therefore $\emptyset\neq\operatorname*{int}[\sigma_{Tx}<x^{*}]\subset\operatorname*{int}(\operatorname*{cl}S_{\operatorname*{int}D(T)}(x))=S_{\operatorname*{int}D(T)}(x)$.
\end{proof} 

\begin{remark} The previous results allow us to present a different
argument for the converse of Theorem \ref{c-1dim-ic}.

Indeed, recall from Proposition \ref{1-dim-convex} that $\mathcal{R}:=[\psi_{T}=c]$
is the unique maximal monotone extension of $T$ and $D(T)=D(\mathcal{R})$
is nearly-convex. We may assume without loss of generality that $\operatorname*{aff}D(T)=X$.
It suffices to show that $T=\mathcal{R}$. Assume by contradiction
that there is $(x,x^{*})\in\mathcal{R}$ such that $x^{*}\not\in Tx\neq\emptyset$.
According to Proposition \ref{si}, condition $T=T+N_{D(T)}$ provides
$\emptyset\neq\operatorname*{int}[\sigma_{Tx}<x^{*}]\subset S_{\operatorname*{int}D(T)}(x)$.
Take $v\in\operatorname*{int}[\sigma_{Tx}<x^{*}]$. Since $v\in S_{\operatorname*{int}D(T)}(x)$
we know that $L(x,v)\cap\operatorname*{int}D(T)\neq\emptyset$, so,
as seen in the proof of Proposition \ref{1-dim-convex}, there is
$y^{*}\in Tx$ such that $\langle v,x^{*}\rangle=\langle v,y^{*}\rangle$
in contradiction with $v\in[\sigma_{Tx}<x^{*}]$. Therefore $T=\mathcal{R}\in\mathscr{M}(X)$.
\end{remark}

The following conjecture is stated in \cite[p.\ 21]{MR2362689}: -
Every maximal monotone operator with a non-empty domain interior and
defined in a Banach space is strongly$\times$ bounded weakly-star
closed. A stronger form of this conjecture, namely the closedness
property with respect to the strong$\times$weak-star topology, is
known to hold for the normal cone to a closed convex set with non-empty
interior (see e.g. \cite[Corollary on p. 58]{MR709590}). The next
results give a positive answer to the mentioned conjecture in a relaxed
context. 

\begin{theorem} \label{FB}Let $(X,\|\cdot\|)$ be a normed barrelled
space and $T\in\mathcal{M}(X)$ be such that $\operatorname*{int}D(T)\neq\emptyset$.
Let $\{(x_{i},x_{i}^{*})\}_{i\in I}\subset T$ be a net indexed on
the directed set $(I,\le)$ such that $x_{i}\rightarrow x_{0}$, strongly
in $X$. Then there exist $\gamma>0$, $\beta\in\mathbb{R}$, $i_{0}\in I$
such that the following {}``a priori'' estimate holds\begin{equation}
\langle x_{0},x_{i}^{*}\rangle\ge\gamma\|x_{i}^{*}\|+\beta\ \forall i\ge i_{0}.\label{ap}\end{equation}

As a consequence we have the following two cases

\noindent \emph{(i)} If $\limsup_{i\in I}\langle x_{0},x_{i}^{*}\rangle<\infty$
then, for some index $i'\in I$, $\{x_{i}^{*}\}_{i\ge i'}$ is bounded
in $X^{*}$. If, in addition

\emph{(a)} $T$ is demiclosed then $x_{0}\in D(T)$; 

\emph{(b)} $T$ is demiclosed and $x_{i}^{*}\rightarrow x_{0}^{*}$,
weakly-star in $X^{*}$ then $(x_{0},x_{0}^{*})\in T$.

In particular, every monotone demiclosed operator with a non-empty
domain interior is strongly$\times$weakly-star closed.

\noindent \emph{(ii) }If $\limsup_{i\in I}\langle x_{0},x_{i}^{*}\rangle=\infty$
then, at least on a subnet, $\|x_{i}^{*}\|^{-1}x_{i}^{*}\rightarrow u^{*}\in N_{D(T)}x_{0}$
weakly-star in $X^{*}$, and $\langle x_{0},u^{*}\rangle\ge\gamma$.

\end{theorem}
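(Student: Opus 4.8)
The plan is to derive the a priori estimate (\ref{ap}) from the monotonicity of $T$ and the local boundedness of $T$ at an interior point of $D(T)$, and then to read off (i) and (ii) from (\ref{ap}) together with the Banach--Alaoglu theorem. Translating $T$ if necessary, I would assume $0\in\operatorname*{int}D(T)$. Since $T\in\mathcal{M}(X)$ and $X$ is barrelled, $T$ is locally bounded at $0$ (here barrelledness enters, via Banach--Steinhaus), so fix $\rho,M>0$ with $\rho B_X\subset D(T)$ and $\|y^*\|\le M$ whenever $y^*\in T(\rho u)$, $u\in B_X$. For each $i$, each $u\in B_X$ and each $y^*\in T(\rho u)$, monotonicity gives $\langle x_i-\rho u,x_i^*-y^*\rangle\ge 0$, hence $\langle x_i,x_i^*\rangle-\rho\langle u,x_i^*\rangle\ge\langle x_i-\rho u,y^*\rangle\ge-(\|x_i\|+\rho)M$; taking the supremum over $u\in B_X$ yields $\langle x_i,x_i^*\rangle\ge\rho\|x_i^*\|-(\|x_i\|+\rho)M$. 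Because $x_i\to x_0$, there is $i_0$ with $\|x_i\|\le\|x_0\|+1$ and $\|x_i-x_0\|\le\rho/2$ for $i\ge i_0$, and then
\[\langle x_0,x_i^*\rangle=\langle x_i,x_i^*\rangle+\langle x_0-x_i,x_i^*\rangle\ \ge\ \tfrac{\rho}{2}\,\|x_i^*\|-(\|x_0\|+1+\rho)M ,\]
which is (\ref{ap}) with $\gamma=\rho/2$, $\beta=-(\|x_0\|+1+\rho)M$.

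For (i): if $\limsup_i\langle x_0,x_i^*\rangle<\infty$, then (\ref{ap}) gives $\gamma\|x_i^*\|\le\langle x_0,x_i^*\rangle-\beta$, so $\limsup_i\|x_i^*\|<\infty$ and $\{x_i^*\}_{i\ge i'}$ is bounded for a suitable $i'$. If moreover $T$ is demiclosed, Banach--Alaoglu produces a subnet of $(x_i^*)_{i\ge i'}$ converging weak-star to some $x_0^*$; along that bounded subnet $(x_i,x_i^*)\to(x_0,x_0^*)$ in $s\times w^*$, whence $(x_0,x_0^*)\in T$ — this gives (a) (in particular $x_0\in D(T)$), and gives (b) directly when the whole net already has $x_i^*\to x_0^*$ weak-star. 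The final assertion is then immediate: for a net $(x_i,x_i^*)\in T$ with $x_i\to x_0$ strongly and $x_i^*\to x_0^*$ weak-star one has $\langle x_0,x_i^*\rangle\to\langle x_0,x_0^*\rangle<\infty$, so case (i)(b) applies and $(x_0,x_0^*)\in T$.

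For (ii): if $\limsup_i\langle x_0,x_i^*\rangle=\infty$, pass to a subnet along which $\langle x_0,x_i^*\rangle\to\infty$; since $\langle x_0,x_i^*\rangle\le\|x_0\|\,\|x_i^*\|$ this forces $\|x_i^*\|\to\infty$, so $u_i^*:=\|x_i^*\|^{-1}x_i^*$ is eventually defined, and by Banach--Alaoglu a further subnet has $u_i^*\to u^*$ weak-star with $\|u^*\|\le 1$. Dividing the monotonicity inequality $\langle x_i-y,x_i^*-y^*\rangle\ge 0$ (for $y\in D(T)$, $y^*\in Ty$) by $\|x_i^*\|$ and letting $i\to\infty$ — using $x_i\to x_0$ strongly, $u_i^*\to u^*$ weak-star, and $\|y^*\|/\|x_i^*\|\to 0$ — yields $\langle x_0-y,u^*\rangle\ge 0$ for all $y\in D(T)$, i.e. $u^*\in N_{D(T)}x_0$; and dividing (\ref{ap}) by $\|x_i^*\|$ and passing to the limit gives $\langle x_0,u^*\rangle\ge\gamma$.

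The only step I expect to require genuine care is the a priori estimate (\ref{ap}): to keep the coefficient of $\|x_i^*\|$ a fixed positive constant one must run the monotonicity argument from an interior point of $D(T)$ (so the uncontrolled term $\langle z,x_i^*\rangle\ge-\|z\|\,\|x_i^*\|$ does not appear) and observe that replacing $x_i$ by $x_0$ in the pairing costs only $\langle x_0-x_i,x_i^*\rangle$, which is $o(\|x_i^*\|)$ since $\|x_i-x_0\|\to 0$. After that, everything is a routine application of Rockafellar's local-boundedness theorem (barrelled version) and weak-star compactness of the dual unit ball.
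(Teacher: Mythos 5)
Your core argument --- local boundedness of $T$ at an interior point of $D(T)$ (the Borwein--Fitzpatrick theorem, which is where barrelledness enters), the monotonicity inequality tested against $T(\rho B_{X})$, and Banach--Alaoglu for (i) and (ii) --- is the right one, and everything downstream of the estimate is correct. (The source of the paper contains no proof of this theorem, only a pointer to the published version, so there is nothing to compare against; but this is surely the intended argument.) The one step that fails is the opening reduction ``translating $T$ if necessary, assume $0\in\operatorname*{int}D(T)$.'' The left-hand side of (\ref{ap}) is $\langle x_{0},x_{i}^{*}\rangle$ with $x_{0}$ the limit of the net, and this is not translation-covariant in the way you need: running your computation from an interior point $z$ gives $\langle x_{0}-z,x_{i}^{*}\rangle\ge\tfrac{\rho}{2}\|x_{i}^{*}\|+\beta$, and translating back costs $\langle z,x_{i}^{*}\rangle\ge-\|z\|\,\|x_{i}^{*}\|$, which destroys the positive coefficient as soon as $\|z\|\ge\rho/2$. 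This is not a repairable defect of your write-up, because (\ref{ap}) as literally stated is false without a normalization: take $X=\mathbb{R}$, $T=\partial f$ with $f(x)=-\sqrt{x}$ for $x\ge0$ and $f(x)=+\infty$ otherwise, so $D(T)=(0,\infty)$, and let $x_{i}=1/i\rightarrow x_{0}=0$, $x_{i}^{*}=-\sqrt{i}/2$. Then $\langle x_{0},x_{i}^{*}\rangle\equiv0$ while $\|x_{i}^{*}\|\rightarrow\infty$, so both (\ref{ap}) and the first assertion of (i) fail. The theorem is correct either under the extra hypothesis $0\in\operatorname*{int}D(T)$ --- in which case your proof, with the translation sentence deleted, is complete --- or with $x_{0}$ replaced by $x_{0}-z$ for a fixed $z\in\operatorname*{int}D(T)$ throughout (\ref{ap}), (i), and the inequality $\langle x_{0},u^{*}\rangle\ge\gamma$ in (ii). You half-noticed this in your closing paragraph (``the uncontrolled term $\langle z,x_{i}^{*}\rangle$''), but the translation does not make that term disappear; it only relocates it.

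Two smaller points. In (ii), the relation $\langle x_{0}-y,u^{*}\rangle\ge0$ for all $y\in D(T)$ identifies $u^{*}$ as an element of $N_{D(T)}x_{0}$ only if $x_{0}\in D(T)$ (the paper's convention $N_{C}=\partial I_{C}$ gives $N_{D(T)}x_{0}=\emptyset$ otherwise), and in case (ii) you have not established $x_{0}\in D(T)$; you should either record the conclusion as the support inequality itself or note that $x_{0}\in\overline{D(T)}$ and interpret the normal cone there. In (i)(a), extracting a weak-star convergent subnet from the \emph{bounded} net $(x_{i}^{*})_{i\ge i'}$ before invoking demiclosedness is exactly right and is the reason the hypothesis is demiclosedness rather than $s\times w^{*}$-closedness; that part needs no change.
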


\begin{proof} See the published version. \end{proof}

\begin{theorem} Let $X$ be a Banach space and let $T\in\mathcal{M}(X)$
be demiclosed with $\operatorname*{ri}D(T)\neq\emptyset$ and $T=T+N_{\operatorname*{aff}D(T)}$.
Then $T$ is $s\times w^{*}-$closed in $X\times X^{*}$. \end{theorem}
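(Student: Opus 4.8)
The plan is to reduce the general case to the full-dimensional case $\operatorname*{aff} D(T) = X$ by passing to the restriction $T_F$, where $F := \operatorname*{aff} D(T)$ (assumed, after a translation, to contain $0$). The hypothesis $T = T + N_{\operatorname*{aff} D(T)}$ is exactly what Propositions \ref{demi} and \ref{val-inchise} require to transfer demiclosedness and $w^*$-closed values down to $T_F$; and $\operatorname*{ri} D(T) \neq \emptyset$ becomes $\operatorname*{int} D(T_F) \neq \emptyset$ in $F$. If I can show $T_F$ is $s \times w^*$-closed in $F \times F^*$, then Corollary \ref{swr}(ii) (whose hypotheses are met once I check $\operatorname*{aff} D(T)$ is closed — which holds because $\operatorname*{ri} D(T) \neq \emptyset$ forces $\overline{D(T)}$ convex with non-empty interior in its affine hull, making that affine hull closed) gives that $T$ itself is $s \times w^*$-closed in $X \times X^*$. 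So the crux is the full-dimensional statement: a demiclosed $T \in \mathcal{M}(X)$ with $\operatorname*{int} D(T) \neq \emptyset$ is $s \times w^*$-closed.

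For that, the natural route is Theorem \ref{FB}. Take a net $\{(x_i, x_i^*)\}_{i \in I} \subset T$ with $x_i \to x_0$ strongly and $x_i^* \to x_0^*$ weakly-star; I want $(x_0, x_0^*) \in T$. Since $x_i^* \to x_0^*$ weak-star, the net $(x_i^*)$ is weak-star convergent, hence $\langle x_0, x_i^* \rangle \to \langle x_0, x_0^* \rangle$, so in particular $\limsup_i \langle x_0, x_i^* \rangle = \langle x_0, x_0^* \rangle < \infty$. That puts us squarely in case (i) of Theorem \ref{FB}: there is $i'$ with $\{x_i^*\}_{i \ge i'}$ bounded, and then part (i)(b) of that theorem — demiclosedness plus $x_i^* \to x_0^*$ weak-star — yields exactly $(x_0, x_0^*) \in T$. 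Indeed the ``In particular'' clause of Theorem \ref{FB}(i) already states that every monotone demiclosed operator with non-empty domain interior is strongly$\times$weakly-star closed, so in the full-dimensional case the result is immediate from the cited theorem.

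So the write-up is short: first reduce to $\operatorname*{aff} D(T) = X$ via a translation and the restriction $T_F$, citing Propositions \ref{val-inchise}, \ref{demi} and Corollary \ref{swr}(ii), together with the observation that $\operatorname*{ri} D(T) \neq \emptyset$ and $T \in \mathcal{M}(X)$ force $\overline{D(T)}$ to be convex (hence $\operatorname*{aff} D(T)$ closed) — this uses the standard fact that a monotone operator with $\operatorname*{int} D \ne \emptyset$ relative to $\operatorname*{aff} D$ has convex domain closure, or one simply notes $D(T_F)$ has non-empty interior so $\overline{D(T_F)}$ is convex by local boundedness arguments; then in the reduced case invoke Theorem \ref{FB}(i). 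I expect the only genuine obstacle to be bookkeeping: verifying that $\operatorname*{aff} D(T)$ is closed (so that Corollary \ref{swr} legitimately applies) and that the reduction does not disturb the hypothesis $T = T + N_{\operatorname*{aff} D(T)}$ — but the latter is precisely the standing assumption, and the former follows because $\operatorname*{ri} D(T) \ne \emptyset$ means $D(T)$ has interior points relative to $\operatorname*{aff} D(T)$, and a set with relative interior points has closed affine hull when that affine hull is finite-codimensional — more robustly, one simply passes to $F := \operatorname*{aff} D(T)$, notes the argument of Theorem \ref{FB} is carried out inside $F$ where $D(T_F)$ is genuinely full-dimensional, and concludes directly that $T_F$ is $s \times w^*$-closed, then lifts back. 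Here is the proof.

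\begin{proof} Assume first that $\operatorname*{aff} D(T) = X$, so that $\operatorname*{int} D(T) = \operatorname*{ri} D(T) \neq \emptyset$ and the condition $T = T + N_{\operatorname*{aff} D(T)}$ is void. Let $\{(x_i, x_i^*)\}_{i \in I} \subset T$ be a net with $x_i \to x_0$ strongly in $X$ and $x_i^* \to x_0^*$ weakly-star in $X^*$. Since $x_i^* \to x_0^*$ weak-star, $\langle x_0, x_i^* \rangle \to \langle x_0, x_0^* \rangle$, hence $\limsup_{i \in I} \langle x_0, x_i^* \rangle = \langle x_0, x_0^* \rangle < \infty$. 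By Theorem \ref{FB} (i), there is $i' \in I$ such that $\{x_i^*\}_{i \ge i'}$ is bounded in $X^*$, and then by Theorem \ref{FB} (i)(b), the demiclosedness of $T$ together with $x_i^* \to x_0^*$ weak-star yields $(x_0, x_0^*) \in T$. Therefore $T$ is $s \times w^*$-closed in $X \times X^*$.

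In the general case, after a translation we may assume $0 \in F := \operatorname*{aff} D(T)$. Since $T \in \mathcal{M}(X)$ is demiclosed with $\operatorname*{ri} D(T) \neq \emptyset$, applying Theorem \ref{FB} (or simply the local boundedness of $T$ at an interior point of $D(T)$ relative to $F$) shows that $\overline{D(T)}$ is convex; in particular $\operatorname*{aff} D(T) = F$ is closed. The hypothesis $T = T + N_{\operatorname*{aff} D(T)} = T + N_F$ allows us to invoke Proposition \ref{val-inchise} and Proposition \ref{demi}: the restriction $T_F : F \rightrightarrows F^*$ has $w^*$-closed values in $F^*$ and is demiclosed in $F \times F^*$. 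Moreover $T_F \in \mathcal{M}(F)$ and $\operatorname*{int} D(T_F) = \operatorname*{ri} D(T) \neq \emptyset$ (see Lemma \ref{int-cl-fin-dim} or directly $D(T_F) = D(T) \cap F = D(T)$). By the first part of the proof applied inside the Banach space $F$, the operator $T_F$ is $s \times w^*$-closed in $F \times F^*$. Since $F$ is closed and $T = T + N_F$, Corollary \ref{swr} (ii) gives that $T$ is $s \times w^*$-closed in $X \times X^*$. \end{proof}
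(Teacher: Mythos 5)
Your proof is correct and follows essentially the same route as the paper's: reduce to $F:=\operatorname*{aff}D(T)$ (closed because $\operatorname*{ri}D(T)\neq\emptyset$), transfer demiclosedness to $T_{F}$ via Proposition \ref{demi}, apply Theorem \ref{FB} to $T_{F}$ in the barrelled space $F$, and lift the closedness back using $T=T+N_{F}$ (the paper cites Proposition \ref{sxw*} where you cite its repackaging, Corollary \ref{swr}(ii)). The only blemish is the appeal to Proposition \ref{val-inchise} to assert that $T_{F}$ has $w^{*}$-closed values --- that would require $T$ itself to have $w^{*}$-closed values, which is not among the hypotheses --- but since that claim is never used in your argument, nothing is lost.
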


\begin{proof} Assume without loss of generality that $0\in F:=\operatorname*{aff}D(T)$.
According to Proposition \ref{demi}, $T_{F}\in\mathcal{M}(F)$ is
demiclosed and $\operatorname*{int}D(T_{F})=\operatorname*{ri}D(T)\neq\emptyset$.
From Theorem \ref{FB}, $T_{F}$ is $s\times w^{*}-$closed in $F\times F^{*}$.
Proposition \ref{sxw*} completes the proof. \end{proof} 

\begin{theorem} Let $X$ be a Banach space and let $T\in\mathscr{M}(X)$
be such that $^{ic}D(T)\neq\emptyset$. Then $T$ is $s\times w^{*}-$closed
in $X\times X^{*}$. \end{theorem}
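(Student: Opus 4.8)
The plan is to reduce the statement to the theorem immediately preceding it, which asserts that any $T\in\mathcal{M}(X)$ that is demiclosed, has $\operatorname*{ri}D(T)\neq\emptyset$, and satisfies $T=T+N_{\operatorname*{aff}D(T)}$ is $s\times w^{*}-$closed in $X\times X^{*}$. Thus it suffices to verify that these four hypotheses hold for a maximal monotone $T$ with ${}^{ic}D(T)\neq\emptyset$.

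First I would note that, since $T\in\mathscr{M}(X)$, the operator $T$ is representable (by the characterization recalled in Section~2: $T\in\mathscr{M}(X)$ iff $T$ is NI and representable), and every representable operator is monotone and demiclosed; hence $T\in\mathcal{M}(X)$ and $T$ is demiclosed without further work. Next, the hypothesis ${}^{ic}D(T)\neq\emptyset$ --- where ${}^{ic}$ denotes the strong relative algebraic interior --- already forces, by its very definition (${}^{ic}A:=\emptyset$ whenever $\operatorname*{aff}A$ fails to be norm-closed), the affine hull $\operatorname*{aff}D(T)$ to be closed. Since $D(T)\subset\operatorname*{aff}D(T)$ and $T\in\mathscr{M}(X)$, Lemma~\ref{afinr} then gives $T=T+N_{\operatorname*{aff}D(T)}$. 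Finally, for a maximal monotone operator one has $\operatorname*{ri}D(T)={}^{ic}D(T)$ (see \cite[Corollary~3]{MR2577332}), so $\operatorname*{ri}D(T)\neq\emptyset$. With all four hypotheses in hand, the preceding theorem yields that $T$ is $s\times w^{*}-$closed in $X\times X^{*}$.

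I do not expect a genuine obstacle here; the one point worth a moment's attention is recognizing that the purely algebraic condition ${}^{ic}D(T)\neq\emptyset$ silently carries the topological information that $\operatorname*{aff}D(T)$ is norm-closed, which is exactly what legitimizes the affine-restriction lemmas and the reduction (used inside the preceding theorem) to the closed --- hence Banach, hence barrelled --- subspace $F=\operatorname*{aff}D(T)$. If one preferred a self-contained argument rather than invoking the previous theorem, the same steps unpack directly: reduce to the case $\operatorname*{aff}D(T)=X$ by combining Lemma~\ref{afinr} with Proposition~\ref{sxw*}, so that $\operatorname*{int}D(T)=\operatorname*{core}D(T)\neq\emptyset$; then apply Theorem~\ref{FB}(i), in particular its last clause that every monotone demiclosed operator with non-empty domain interior is strongly$\times$weakly-star closed, to conclude that $T$ is $s\times w^{*}-$closed on the reduced space; and finally transfer the conclusion back to $X\times X^{*}$ via Proposition~\ref{sxw*} once more.
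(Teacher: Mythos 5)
Your proposal is correct and follows the paper's own route exactly: reduce to the immediately preceding theorem by noting that maximality gives demiclosedness, $\operatorname*{ri}D(T)={}^{ic}D(T)\neq\emptyset$, and $T=T+N_{\operatorname*{aff}D(T)}$. The paper states these three facts in one line and cites the previous theorem, while you supply the (accurate) justifications for each; the observation that ${}^{ic}D(T)\neq\emptyset$ already encodes the closedness of $\operatorname*{aff}D(T)$ is a worthwhile clarification but not a different argument.
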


\begin{proof} Again we may assume that $0\in F:=\operatorname*{aff}D(T)$.
Since $T\in\mathscr{M}(X)$ we know that $T$ is demiclosed, $\operatorname*{ri}D(T)={}^{ic}D(T)\neq\emptyset$,
and $T=T+N_{F}$. The conclusion follows from the previous theorem.
\end{proof}

\strut

It is common knowledge that a maximal monotone operator with a non-empty
domain interior and defined in a Banach space is strongly$\times$weakly-star
upper semi-continuous on the interior of its domain (see e.g. \cite[Theorem\ 6.7, p. 55]{MR687963},
\cite[Proposition B, p. 113]{MR0425687}, or \cite[Theorem 2.5 (i), p. 155]{MR1079061}).
For completeness we include a proof of this result in a slightly relaxed
context.

\begin{theorem} Let $X$ be a normed barrelled space and let $T\in\mathcal{M}(X)$.
If $T$ is demiclosed then $T$ is $s\times w^{*}-$upper semicontinuous
at every $x\in\operatorname*{core}D(T)$. \end{theorem}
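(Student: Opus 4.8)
The claim is that a demiclosed monotone operator $T$ on a normed barrelled space is $s\times w^{*}$--upper semicontinuous at every $x\in\operatorname*{core}D(T)$. The plan is to fix $x_{0}\in\operatorname*{core}D(T)$ and a $w^{*}$--open set $V\supset Tx_{0}$ and argue by contradiction: suppose no neighbourhood $U$ of $x_{0}$ satisfies $T(U)\subset V$. Then one can extract a net $x_{i}\to x_{0}$ strongly with points $x_{i}^{*}\in Tx_{i}\setminus V$. The goal is to show this net of dual elements has a $w^{*}$--cluster point lying in $Tx_{0}$ (hence in $V$), giving the contradiction.

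First I would invoke Theorem \ref{FB}, which applies since $\operatorname*{core}D(T)\subset\operatorname*{int}D(T)\neq\emptyset$ in a barrelled space (indeed for $T\in\mathcal{M}(X)$ local boundedness at interior points and the barrelledness give $\operatorname*{core}D(T)=\operatorname*{int}D(T)$; alternatively one uses that $T$ is locally bounded at $x_{0}$ directly). From local boundedness of $T$ at $x_{0}$ there are $M,r>0$ with $x_{0}+rB_{X}\subset D(T)$ and $\|z^{*}\|\le M$ for all $z^{*}\in T(x_{0}+ru)$, $u\in B_{X}$; the standard monotonicity estimate $\langle x_{0}+ru-x_{i},z^{*}-x_{i}^{*}\rangle\ge0$ then forces $(x_{i}^{*})_{i}$ to be eventually bounded, so $\limsup_{i}\langle x_{0},x_{i}^{*}\rangle<\infty$ and we are in case (i) of Theorem \ref{FB}. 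Consequently $(x_{i}^{*})_{i\ge i'}$ is norm--bounded; since norm--bounded sets in $X^{*}$ are relatively $w^{*}$--compact (Banach--Alaoglu applies to the completion of $X$, and bounded $w^{*}$--closed subsets of $X^{*}$ are $w^{*}$--compact regardless of completeness of $X$), we may pass to a subnet with $x_{i}^{*}\to x_{0}^{*}$, $w^{*}$ in $X^{*}$.

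Now $x_{i}\to x_{0}$ strongly, $x_{i}^{*}\to x_{0}^{*}$ weakly--star, and $(x_{i}^{*})_{i}$ is bounded, so demiclosedness of $T$ yields $(x_{0},x_{0}^{*})\in T$, i.e.\ $x_{0}^{*}\in Tx_{0}\subset V$. But $V$ is $w^{*}$--open and $x_{i}^{*}\to x_{0}^{*}$ in $w^{*}$, so eventually $x_{i}^{*}\in V$, contradicting $x_{i}^{*}\notin V$ for all $i$. This contradiction shows that some neighbourhood $U$ of $x_{0}$ has $T(U)\subset V$, which is exactly $s\times w^{*}$--upper $\mathscr{H}$--semicontinuity of $T$ at $x_{0}$.

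The only delicate point is the reduction to a countable-type argument: since we work with nets rather than sequences, I would phrase the extraction of $x_{i}^{*}\notin V$ as a genuine subnet of the defining net for the failure of usc, and use the net characterization of upper $\mathscr{H}$--semicontinuity recalled in the Preliminaries ("for every net $x_{i}\to x$ strongly, eventually $Tx_{i}\subset V$"). The main obstacle is ensuring the boundedness step is uniform along the net — i.e.\ that the index $i'$ from Theorem \ref{FB}(i) is a genuine tail index so that Banach--Alaoglu can be applied to the tail $(x_{i}^{*})_{i\ge i'}$ — but this is exactly what Theorem \ref{FB} provides, so there is no real gap. Everything else is the routine monotonicity/local-boundedness estimate plus demiclosedness.
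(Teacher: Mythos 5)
Your proposal is correct and follows essentially the same route as the paper: assume failure of upper semicontinuity at $x\in\operatorname*{core}D(T)$, use local boundedness of the monotone operator at that point to bound the net $(x_{i}^{*})_{i}$, extract a $w^{*}$--convergent subnet by Alaoglu, and apply demiclosedness to land the limit in $Tx\subset V$, contradicting $x_{i}^{*}\notin V$. The detour through Theorem \ref{FB} is harmless but unnecessary; the paper goes directly via the local boundedness neighbourhood, exactly as in your parenthetical alternative.
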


\begin{proof} Assume that $T$ is not $s\times w^{*}-$upper semicontinuous
at some $x\in\operatorname*{core}D(T)$, that is, there is a $w^{*}-$open
set $V\supset Tx$, $x_{n}\rightarrow x$, strongly in $X$, and $x_{n}^{*}\in Tx_{n}$
such that $x_{n}^{*}\not\in V$, for every $n\ge1$. For $n$ large
enough, $x_{n}\in U$, where $U$ is the neighborhood of $x$ on which
$T(U)$ is bounded. Therefore $(x_{n}^{*})_{n}$ is bounded and eventually
on a subnet, denoted by the same index for simplicity, $x_{n}^{*}\rightarrow x^{*}$
weakly-star in $X^{*}$. Then $x^{*}\in Tx$ since $T$ is demiclosed
and $x^{*}\not\in V$ because $V$ is $w^{*}-$open. This spells the
obvious contradiction $Tx\not\subset V$. \end{proof}

\strut

It is easily verifiable that the $s\times w^{*}-$upper semicontinuity
of a maximal monotone operator $T$ does not necessarily hold on the
boundary of $D(T)$ even though the context is finite dimensional.
For example take $B$ the closed unit ball in $\mathbb{R}^{2}$ endowed
with the usual euclidean inner product {}``$\langle\cdot,\cdot\rangle$''
and norm {}``$\|\!\cdot\!\|$''. Then $N_{B}$ is not upper semicontinuous
at any $x\in\mathbb{R}^{2}$ with $\|x\|=1$. Indeed, $N_{B}x=\mathbb{R}_{+}x$,
for every $\|x\|=1$ and for every $t\ge0$, $\|y\|=1$, $d:=\operatorname*{dist}(ty,\mathbb{R}_{+}x)=t\sqrt{1-\langle x,y\rangle^{2}}$.
Hence for $W:=\tfrac{1}{2}B$, $V:=N_{B}x+W$ (a neighborhood of $N_{B}x$),
and every $U$ a neighborhood of $x$ we pick $y\in U$ with $\|y\|=1$,
$y\neq x$ (so $\langle x,y\rangle\neq1$), and $t=(1-\langle x,y\rangle^{2})^{-1/2}$.
Then $ty\in N_{B}(U)$ and $ty\not\in V$ due to $\operatorname*{dist}(ty,\mathbb{R}_{+}x)=1$.

{\begin{pspicture}(0,-3)(4,2) 
\rput(3,1.2){$\mathbb{R}^2$}
\rput(5.5,-.5){\psaxes[linewidth=0.01,labels=none,ticks=none]{->}(0,0)(-3,-2)(3,2)} 
\pscircle[linewidth=0.01,dimen=outer](5.5,-.5){1}
\rput(4.2,-1){$B$} 
\psline[linecolor=blue,linewidth=0.01]{->}(5.5,-.5)(6.5,1.5)
\rput(6.7,1.7){$N_B x$} 
\rput(5.75,.35){$x$}
\psline[linecolor=black,linewidth=0.01]{->}(5.5,-.5)(7.9,.7)
\rput(6.55,-.2){$y$}
\rput(7.6,0.3){$ty$}
\psline[linestyle=dashed,linecolor=magenta,linewidth=0.01]{-}(7.5,.5)(6.3,1.1)
\rput(7.2,1){$d$}
\rput{-30.0}(1.6,1.8){\psellipse[linestyle=dotted,linewidth=0.02,dimen=outer](4.44,0.7917187)(1.05,0.34)}
\rput(4.8,1){$U$}
\rput{160.0}(11.45,-4.7){\psarc[linewidth=0.01,linecolor=blue,linestyle=dashed](7,-2){.5}{0.0}{180.0}}
\psline[linestyle=dashed,linecolor=blue,linewidth=0.01]{-}(5.06,-.33)(6.06,1.67)
\psline[linestyle=dashed,linecolor=blue,linewidth=0.01]{-}(6,-.67)(7,1.5)
\rput(5,-.2){\color{blue}{$V$}}
\end{pspicture}  } 

However, in a finite dimensional settings a different form of the
$s\times w^{*}-$upper semicontinuity, namely, the (Q) (or Cesari)
property holds for maximal monotone operators (see e.g. \cite[Lemma\ 3.2]{MR2465513},
\cite{MR687963}). Our final aim is to extend the (Q) property to
an infinite dimensional context. 

Recall that $T:X\rightrightarrows X^{*}$ \emph{has property (Q)}
(or is \emph{upper $\mathscr{C}-$semicontinuous}) \emph{at} $x\in X$
(with respect to the $s\times w^{*}-$topology on $X\times X^{*}$)
if for every net $\{x_{i}\}_{i\in I}\subset X$ such that $x_{i}\rightarrow x$,
strongly in $X$ we have\[
\bigcap_{i\in I}\operatorname*{cl}\!\,_{w^{*}}(\operatorname*{conv}\bigcup_{j\ge i}Tx_{j})\subset Tx.\]
Clearly, property (Q) (as well as the $s\times w^{*}-$usc property)
at $x$ has substance only when $x\in\overline{D(T)}$ and, at least
on a subnet, $\{x_{i}\}_{i}\subset D(T)$. The operator $T$ \emph{has
property (Q)} if it has property (Q) at each $x\in X$.

\begin{theorem}\label{Q} Let $X$ be a normed barrelled space and
let $T\in\mathscr{M}(X)$ be such that $\operatorname*{core}D(T)\neq\emptyset$.
Then $T$ has property (Q). \end{theorem}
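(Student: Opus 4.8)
The plan is to deduce property (Q) from the a priori estimate of Theorem \ref{FB}. Fix $x\in X$ (only $x\in\overline{D(T)}$ carries content), a net $\{x_i\}_{i\in I}$ with $x_i\to x$ strongly, and $x^*\in\bigcap_{i\in I}\operatorname*{cl}_{w^*}(\operatorname*{conv}\bigcup_{j\geq i}Tx_j)$; the goal is $x^*\in Tx$. First I would discard trivialities and normalise the net: if $\bigcup_{j\geq i}Tx_j=\emptyset$ for some $i$ then the intersection is empty and there is nothing to prove; otherwise $J:=\{i\in I\mid x_i\in D(T)\}$ is cofinal, hence directed, and passing to this subnet (which does not remove $x^*$ from the intersection, since the discarded $Tx_j$ are empty) I may assume $x_i\in D(T)$ for all $i$. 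Recall that, $T$ being maximal monotone with $\operatorname*{core}D(T)\neq\emptyset$, one has $\operatorname*{int}D(T)=\operatorname*{core}D(T)\neq\emptyset$; by the maximality of $T$ it then suffices to show that $(x,x^*)$ is m.r. to $T$, i.e.\ $\langle x-a,x^*-a^*\rangle\geq0$ for every $(a,a^*)\in T$.

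The crux is to remove the obstructive bilinear term. By Theorem \ref{FB} (whose proof produces $\gamma>0$, $\beta\in\mathbb{R}$, $i_0\in I$ depending only on the net $\{x_i\}$ and not on the selection $x_j^*\in Tx_j$, since they arise from a monotonicity inequality against a fixed bounded set $T(z+rB_X)$ with $z\in\operatorname*{int}D(T)$) we have $\langle x,x_j^*\rangle\geq\gamma\|x_j^*\|+\beta$ for all $j\geq i_0$ and all $x_j^*\in Tx_j$; one may also assume $\|x_j-x\|\leq1$ for $j\geq i_0$. Fixing $(a,a^*)\in T$, for $j\geq i_0$ and $x_j^*\in Tx_j$ I would write $\langle x-a,x_j^*-a^*\rangle=\langle x_j-a,x_j^*-a^*\rangle+\langle x-x_j,x_j^*-a^*\rangle$, bound the first summand below by $0$ (monotonicity of $T$) and the second by $-\|x-x_j\|(\|x_j^*\|+\|a^*\|)$, and insert $\|x_j^*\|\leq\gamma^{-1}(\langle x,x_j^*\rangle-\beta)$ to obtain
\[
\langle x-a,x_j^*-a^*\rangle\ \geq\ -\tfrac{\|x-x_j\|}{\gamma}\bigl(\langle x,x_j^*\rangle-\beta\bigr)-\|x-x_j\|\,\|a^*\|.
\]
Setting $\delta_i:=\sup_{j\geq i}\|x_j-x\|$ (so $\delta_i\to0$ and $\delta_i\leq1$ for $i\geq i_0$), I take any convex combination $y^*=\sum_k\lambda_k x_{j_k}^*$ with all $j_k\geq i\geq i_0$ and $x_{j_k}^*\in Tx_{j_k}$, multiply the displayed inequality (with $j=j_k$) by $\lambda_k$ and sum; using $\langle x,x_{j_k}^*\rangle-\beta\geq\gamma\|x_{j_k}^*\|\geq0$ and $\|x-x_{j_k}\|\leq\delta_i$, this collapses to the estimate, now affine in $y^*$,
\[
\langle x-a,y^*-a^*\rangle\ \geq\ -\tfrac{\delta_i}{\gamma}\bigl(\langle x,y^*\rangle-\beta\bigr)-\delta_i\,\|a^*\|,\qquad\forall\,y^*\in\operatorname*{conv}\bigcup_{j\geq i}Tx_j .
\]

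To conclude, I would observe that $y^*\mapsto\langle x-a,y^*-a^*\rangle$ and $y^*\mapsto\langle x,y^*\rangle$ are $w^*$-continuous (they are given by $x-a,\,x\in X$), so the last inequality survives passage to the $w^*$-closure of $\operatorname*{conv}\bigcup_{j\geq i}Tx_j$; evaluating at $y^*=x^*$ gives $\langle x-a,x^*-a^*\rangle\geq-\tfrac{\delta_i}{\gamma}(\langle x,x^*\rangle-\beta)-\delta_i\|a^*\|$ for every $i\geq i_0$. As $\gamma^{-1}(\langle x,x^*\rangle-\beta)+\|a^*\|$ is a fixed finite number and $\delta_i\to0$, letting $i$ run to infinity yields $\langle x-a,x^*-a^*\rangle\geq0$. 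Since $(a,a^*)\in T$ was arbitrary, $(x,x^*)$ is m.r. to $T$, so $x^*\in Tx$ by maximality; as $x\in X$ was arbitrary, $T$ has property (Q).

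I expect the genuine difficulty to be conceptual rather than computational: property (Q) mixes convex hulls with $w^*$-limits of families $Tx_j$ that may be \emph{unbounded}, and the pairing $\langle x_j,x_j^*\rangle$ forced out by monotonicity cannot be controlled termwise. The estimate $\langle x,x_j^*\rangle\geq\gamma\|x_j^*\|+\beta$ of Theorem \ref{FB} is exactly what converts $\|x_j^*\|$ into a quantity linear in the dual variable; after this substitution every inequality in play is affine in $y^*$ and hence stable under convex combinations and $w^*$-closure, which is the step that makes the argument close. A secondary point requiring care is the uniformity of $\gamma,\beta,i_0$ over the selections $x_j^*\in Tx_j$ entering the convex combinations; this is transparent from the proof of Theorem \ref{FB}, and could otherwise be re-derived in a couple of lines by a direct monotonicity argument against a ball inside $\operatorname*{int}D(T)$.
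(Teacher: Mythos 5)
Your proof is correct, but it follows a genuinely different route from the paper's. The paper argues by contradiction: assuming $x^*\notin Tx$, it separates ($v$ with $\langle v,x^*\rangle>\sigma_{Tx}(v)$), invokes Proposition \ref{si} to place $v$ in $\operatorname*{int}T_{D(T)}x$, extracts from the convex combinations a selection net $\{x_{\varphi(i)}^*\}$ with $\langle v,x_{\varphi(i)}^*\rangle$ bounded below, and then splits into cases according to whether $\{x_{\varphi(i)}^*\}$ is bounded (contradiction via demiclosedness) or not (contradiction via the normalized $w^*$-limit $u^*\in N_{D(T)}x$ and a perturbation $v+\delta x$). You instead prove directly that $(x,x^*)$ is monotonically related to $T$ and conclude by maximality, with the single key observation that the a priori estimate of Theorem \ref{FB} converts $\|x_j^*\|$ into an affine function of $x_j^*$, after which every inequality in play is affine and $w^*$-continuous in the dual variable and therefore survives convex combinations and $w^*$-closure. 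This bypasses Propositions \ref{sai}, \ref{saz}, \ref{si}, the separation step, the case analysis, and any explicit use of demiclosedness; it is shorter and conceptually cleaner. Two points deserve care. First, you need the constants $\gamma,\beta,i_0$ of Theorem \ref{FB} to be uniform over all selections $x_j^*\in Tx_j$, which is stronger than the literal statement (formulated for one fixed net in $T$); your justification is sound --- the estimate comes from monotonicity against a single ball $z+rB_X\subset\operatorname*{int}D(T)$ on which $T$ is bounded, so it holds for every $x_j^*\in Tx_j$ simultaneously --- and, as you note, it can be re-derived in a few lines, but since the paper defers the proof of Theorem \ref{FB} elsewhere, you should actually write those lines. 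Second, that derivation naturally produces $\langle x-z,x_j^*\rangle\ge\gamma\|x_j^*\|+\beta$ rather than $\langle x,x_j^*\rangle\ge\gamma\|x_j^*\|+\beta$; your argument is insensitive to this (replace $\langle x,\cdot\rangle$ by $\langle x-z,\cdot\rangle$ throughout, which is still linear and $w^*$-continuous), so nothing breaks, but the substitution should be made explicit if you re-derive the estimate yourself.
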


\begin{proof} Let $x\in X$, let $\{x_{i}\}_{i\in I}$ be such that
$x_{i}\rightarrow x$, strongly in $X$, and let $x^{*}\in\bigcap_{i\in I}\operatorname*{cl}\!\,_{w^{*}}(\operatorname*{conv}\bigcup_{j\ge i}Tx_{j})$.

For every $i\in I$ there are $k:=k_{i}$, $I\ni j_{1},..,j_{k}\ge i$,
$x_{j_{1}}^{*}\in Tx_{j_{1}}$,.., $x_{j_{k}}^{*}\in Tx_{j_{k}}$,
$\lambda_{1},..,\lambda_{k}>0$ such that $\sum_{\ell=1}^{k}\lambda_{\ell}=1$
and $y^{*}:=\sum_{\ell=1}^{k}\lambda_{\ell}x_{j_{\ell}}^{*}$ satisfies
$|\langle x,y^{*}-x^{*}\rangle|\le1$. Pick an index $p\in\{1,..,k_{i}\}$
such that $\langle x,x_{j_{p}}^{*}\rangle\le\langle x,y^{*}\rangle$
(such an index exists since $\langle x,y^{*}\rangle=\sum_{\ell=1}^{k}\lambda_{\ell}\langle x,x_{j_{\ell}}^{*}\rangle$)
and denote $j_{p}$ by $\varphi(i)$. In this way we generate a map
$\varphi:I\rightarrow I$ such that $\varphi(i)\ge i$, for every
$i\in I$ and a net $\{(x_{\varphi(i)},x_{\varphi(i)}^{*})\}_{i\in I}\subset T$
such that $\{x_{\varphi(i)}\}_{i\in I}$ is a subnet of $\{x_{i}\}_{i\in I}$
and $\sup_{i\in I}\langle x,x_{\varphi(i)}^{*}\rangle\le\langle x,x^{*}\rangle+1<\infty$.
According to Theorem \ref{FB} (i), $x\in D(T)$, since $T$ is demiclosed.

Assume, by contradiction, that $x^{*}\not\in Tx$. Under our assumptions
we may apply Proposition \ref{si} to get $v\in\operatorname*{int}[\sigma_{Tx}<x^{*}]\subset\operatorname*{int}T_{D(T)}x$.
Let $\epsilon_{0}:=(\langle v,x^{*}\rangle-\sigma_{Tx}(v))/2>0$.
As previously seen, for every $i\in I$ there are $k:=k_{i}$, $I\ni j_{1},..,j_{k}\ge i$,
$x_{j_{1}}^{*}\in Tx_{j_{1}}$,.., $x_{j_{k}}^{*}\in Tx_{j_{k}}$,
$\lambda_{1},..,\lambda_{k}>0$ such that $\sum_{\ell=1}^{k}\lambda_{\ell}=1$
and $y^{*}:=\sum_{\ell=1}^{k}\lambda_{\ell}x_{j_{\ell}}^{*}$ satisfies
$|\langle v,y^{*}-x^{*}\rangle|\le\epsilon_{0}$. Pick an index $p\in\{1,..,k_{i}\}$
such that $\langle v,x_{j_{p}}^{*}\rangle\ge\langle v,y^{*}\rangle$
and denote $j_{p}$ by $\varphi(i)$ to generate the map $\varphi:I\rightarrow I$
with the properties $\varphi(i)\ge i$, for every $i\in I$, $\{(x_{\varphi(i)},x_{\varphi(i)}^{*})\}_{i\in I}\subset T$,
$x_{\varphi(i)}\rightarrow x$, strongly in $X$, and \begin{equation}
\inf_{i\in I}\langle v,x_{\varphi(i)}^{*}\rangle\ge\langle v,x^{*}\rangle-\epsilon_{0}.\label{fin}\end{equation}
According to Theorem \ref{FB}, there exist $\gamma>0$, $\beta\in\mathbb{R}$,
$i_{0}\in I$ such that \begin{equation}
\langle x,x_{\varphi(i)}^{*}\rangle\ge\gamma\|x_{\varphi(i)}^{*}\|+\beta\ \forall i\ge i_{0}.\label{apr}\end{equation}

If $\limsup_{i\in I}\langle x,x_{\varphi(i)}^{*}\rangle<\infty$ then,
from Theorem \ref{FB} (i), $\{x_{\varphi(i)}^{*}\}_{i\ge i'}$ is
bounded so, at least on a subnet, $x_{\varphi(i)}^{*}\rightarrow x_{0}^{*}\in Tx$,
weakly-star in $X^{*}$. Passing to limit in (\ref{fin}) leads to
the contradiction $\sigma_{Tx}(v)\ge\langle v,x_{0}^{*}\rangle\ge\langle v,x^{*}\rangle-\epsilon_{0}=\sigma_{Tx}(v)+\epsilon_{0}$.

Therefore, at least on a subnet, denoted for simplicity by the same
index, $\lim_{i\in I}\langle x,x_{\varphi(i)}^{*}\rangle=\lim_{i\in I}\|x_{\varphi(i)}^{*}\|=\infty$.
From Theorem \ref{FB} (ii) we know that $\|x_{\varphi(i)}^{*}\|^{-1}x_{\varphi(i)}^{*}\rightarrow u^{*}\in N_{D(T)}x$,
weakly-star in $X^{*}$, and $\langle x,u^{*}\rangle\ge\gamma$. Divide
(\ref{fin}) by $\|x_{\varphi(i)}^{*}\|$ and pass to limit to obtain
$\langle v,u^{*}\rangle=0$ due to the fact that $v\in T_{D(T)}x=(N_{D(T)}x)^{-}$.

Take $\delta>0$ such that $v+\delta x\in T_{D(T)}x$. If $\limsup_{i\in I}\langle v+\delta x,x_{\varphi(i)}^{*}\rangle>-\infty$,
i.e., at least on a subnet, $\{\langle v+\delta x,x_{\varphi(i)}^{*}\rangle\}_{i}$
is bounded from below: $\langle v+\delta x,x_{\varphi(i)}^{*}\rangle\ge C>-\infty$,
for every $i\in I$. Divide again by $\|x_{\varphi(i)}^{*}\|$ and
pass to limit to find the contradiction $\delta\gamma\le\langle v+\delta x,u^{*}\rangle=0$.
Therefore $\lim_{i\in I}\langle v+\delta x,x_{\varphi(i)}^{*}\rangle=-\infty$
from which we get, taking (\ref{fin}) into account, that $\lim_{i\in I}\langle x,x_{\varphi(i)}^{*}\rangle=-\infty$,
which contradicts (\ref{apr}). The proof is complete. \end{proof}

\begin{theorem}\label{Q-ic} Let $X$ be a Banach space and let $T\in\mathscr{M}(X)$
be such that $^{ic}D(T)\neq\emptyset$. Then $T$ has property (Q).
\end{theorem}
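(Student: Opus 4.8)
The plan is to reduce the statement to the full-dimensional situation already settled in Theorem~\ref{Q}, by passing to the restriction of $T$ to the affine hull of its domain, and then to transfer property~(Q) back to $T$.

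First I would observe that the hypothesis $^{ic}D(T)\neq\emptyset$ forces $\operatorname*{aff}D(T)$ to be $s$-closed (this is built into the very definition of $^{ic}$), so after a harmless translation we may assume $0\in F:=\operatorname*{aff}D(T)$, a closed linear subspace, hence itself a Banach space and in particular normed barrelled. Since $T\in\mathscr{M}(X)$ and $D(T)\subset F$, Lemma~\ref{afinr} gives $T=T+N_F$ and $T_F\in\mathscr{M}(F)$. Moreover $D(T_F)=D(T)\cap F=D(T)$, and by the maximal-monotonicity property of $T$ recalled in the preliminaries, $\operatorname*{int}_F D(T_F)=\operatorname*{ri}D(T)={}^{ic}D(T)\neq\emptyset$, so in particular $\operatorname*{core}D(T_F)\neq\emptyset$. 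Therefore Theorem~\ref{Q} applies to $T_F$, and $T_F$ has property~(Q) in $F\times F^{*}$.

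Next I would carry out the transfer. Fix a net $\{x_i\}_{i\in I}$ with $x_i\rightarrow x_0$ strongly in $X$, and let $x^{*}\in\bigcap_{i\in I}\operatorname*{cl}_{w^{*}}(\operatorname*{conv}\bigcup_{j\ge i}Tx_j)$; the goal is $x^{*}\in Tx_0$. If that intersection is empty there is nothing to prove, so we may assume it is not, which means $\{i\in I\mid x_i\in D(T)\}$ is cofinal in $I$; passing to the associated subnet, the tail unions $\bigcup_{j\ge i}Tx_j$ are unchanged, so we may assume $x_i\in D(T)\subset F$ for all $i$, whence $x_0\in\overline{D(T)}\subset F$ because $F$ is closed. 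The restriction map $\iota_F^{*}\colon(X^{*},w^{*})\rightarrow(F^{*},w^{*})$, $x^{*}\mapsto x^{*}|_F$, is linear and $w^{*}$-to-$w^{*}$ continuous (each $v\in F\subset X$ yields a $w^{*}$-continuous functional $x^{*}\mapsto\langle v,x^{*}\rangle$), it commutes with unions and convex hulls, and it maps $w^{*}$-closures into $w^{*}$-closures; since $\iota_F^{*}(Tx_j)=T_Fx_j$ for every $j$, applying $\iota_F^{*}$ to the displayed membership gives $x^{*}|_F\in\bigcap_{i}\operatorname*{cl}_{w^{*}}(\operatorname*{conv}\bigcup_{j\ge i}T_Fx_j)$. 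Property~(Q) of $T_F$ then yields $x^{*}|_F\in T_Fx_0$, i.e.\ there is $y^{*}\in Tx_0$ with $y^{*}|_F=x^{*}|_F$. Hence $x^{*}-y^{*}\in F^{\perp}=N_F(x_0)$ (recall $x_0\in F$), and since $T=T+N_F$ we conclude $x^{*}=y^{*}+(x^{*}-y^{*})\in Tx_0+N_F(x_0)=Tx_0$, which proves property~(Q) of $T$ at $x_0$.

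The only delicate part is the bookkeeping in the transfer step: checking that restricting to the cofinal subnet leaves the tails $\bigcup_{j\ge i}Tx_j$ intact, that continuity of $\iota_F^{*}$ genuinely pushes the $w^{*}$-closures forward, and that the final $N_F$-correction is legitimate. None of this is conceptually hard; the substance of the result is entirely carried by Theorem~\ref{Q} (and, through it, by the \emph{a priori} estimate of Theorem~\ref{FB} and the convex-analytic content of Proposition~\ref{si}).
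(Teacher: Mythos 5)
Your proposal is correct and follows essentially the same route as the paper: translate so that $0\in F:=\operatorname*{aff}D(T)$ (closed because $^{ic}D(T)\neq\emptyset$), note that $T_F\in\mathscr{M}(F)$ with $\operatorname*{int}D(T_F)=\operatorname*{ri}D(T)\neq\emptyset$ and $T=T+N_F$, apply Theorem~\ref{Q} to $T_F$, and transfer property (Q) back via the $w^{*}$-to-$w^{*}$ continuity of $\iota_F^{*}$ together with $Tx+F^{\perp}=Tx$. The extra bookkeeping you supply (the cofinality reduction and the fact that $\iota_F^{*}$ pushes closures, convex hulls and unions forward) is exactly the detail the paper leaves implicit.
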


\begin{proof}As usual assume that $0\in F:=\operatorname*{aff}D(T)$.
Since $T\in\mathscr{M}(X)$ we know that $T_{F}\in\mathscr{M}(F)$,
$\operatorname*{int}D(T_{F})=\operatorname*{ri}D(T)={}^{ic}D(T)\neq\emptyset$,
and $T=T+N_{F}$. According to Theorem \ref{Q}, $T_{F}$ has property
(Q) with respect to the $s\times w^{*}-$topology on $F\times F^{*}$. 

Let $x\in\overline{D(T)}$, let $\{x_{i}\}_{i\in I}\subset D(T)$
be such that $x_{i}\rightarrow x$, strongly in $X$, and let $x^{*}\in\bigcap_{i\in I}\operatorname*{cl}\!\,_{w^{*}}(\operatorname*{conv}\bigcup_{j\ge i}Tx_{j})$.
Then $x^{*}|_{F}\in\bigcap_{i\in I}\operatorname*{cl}\!\,_{w^{*}}(\operatorname*{conv}\bigcup_{j\ge i}T_{F}x_{j})$
due to the continuity of $\iota^{*}:(X^{*},w^{*})\rightarrow(F^{*},w^{*})$,
$\iota^{*}(x^{*})=x^{*}|_{F}$. Hence $x^{*}|_{F}\in T_{F}x$, that
is $x^{*}\in Tx+F^{\perp}=Tx$. \end{proof}

\bibliographystyle{plain}

\end{document}